\documentclass[amstex,12pt]{article}
\usepackage{amssymb,amsmath,amsthm}
\hyphenation{con-firm} \textheight22cm
\setlength{\textwidth}{16.7cm} \addtolength{\topmargin}{-1.9cm}
\addtolength{\oddsidemargin}{-1.6cm}
\addtolength{\evensidemargin}{0cm}
\addtolength{\evensidemargin}{0cm} \setcounter{enumi}{1}
\newtheorem{definition}{Definition}[section]
\newtheorem{theorem}{Theorem}[section]
\newtheorem{corollary}{Corollary}[section]
\newtheorem{lemma}{Lemma}[section]
\newtheorem{remark}{Remark}[section]
\newtheorem{example}{Example}[section]

\newcommand{\beq}{\begin{equation}}
\newcommand{\eeq}{\end{equation}}
\newcommand{\beqn}{\begin{eqnarray}}
\newcommand{\eeqn}{\end{eqnarray}}

\baselineskip 20pt
\allowdisplaybreaks
\begin{document}
\title{Weighted pseudo-almost periodic functions on time scales with applications to cellular neural networks with  discrete delays\thanks{This work is supported by the National Natural
Sciences Foundation of People's Republic of China under Grant
11361072.}}
\author{ Yongkun Li\thanks{The corresponding author.  Email: yklie@ynu.edu.cn.} and Lili Zhao\\
Department of Mathematics,
Yunnan University\\
Kunming, Yunnan 650091\\
People's Republic of China}
\date{}
\maketitle
\begin{abstract}
In this paper, we first propose a concept of weighted pseudo-almost periodic functions on time scales
and   study some basic properties of weighted pseudo-almost periodic functions on time scales.   Then,
we establish some results about the existence of weighted pseudo-almost periodic solutions to linear dynamic
equations on time scales. Finally, as an application of our results, we  study the existence
and global  exponential stability of weighted pseudo-almost periodic solutions for a class of cellular neural networks
with discrete delays on time scales. The results of this paper are completely new.
\end{abstract}
\textbf{Keywords:}  Weighted pesudo-almost periodic solutions; Global exponential stability; Neural networks; Exponential dichotomy; Time scales.\\
{\bf  Mathematics Subject Classification 2010:} 34K14; 34K20; 92B20; 34N 05.

\section{Introduction}
\setcounter{equation}{0} \vspace{1ex} \indent

The concept of pseudo-almost periodicity, which is a natural generalization of the
notion of almost periodicity, was introduced in the literature more than a decade
ago by C. Zhang [1-3]. Since its introduction in the literature, the notion of
pseudo-almost periodicity has generated several developments and extensions, see,
e.g., \cite{w1,w2}. The concept of weighted pseudo-almost periodicity
introduced in the literature in 2006 by Diagana \cite{w2}.  The notion of weighted
pseudo-almost periodicity has generated several developments since then, see for instance the
 references [6-8].
 Among other things, it has been utilized to study the qualitative
behavior to various differential and partial differential equations involving weighted pseudo-almost
periodic coefficients, see, e.g., [9-11].

On the other hand,
dynamic equations on time scales offer a new direction
in the study of dynamic systems which involve
differential equations and difference equations as special cases.
Their origin is connected with Stefan Hilger's work \cite{hiphd,hi1}.
In 1988, Stefan Hilger introduced the definition of
a $\Delta$-derivative.
The common derivative and the common forward difference
are special cases of the $\Delta$-derivative.
  In fact, the progressive field of
dynamic equations on time scales contains links to and extends the classical theory of differential
and difference equations. For instance, by choosing the time scale to be the set
of real numbers, the general result yields a result for differential equations. In a similar
way, by choosing the time scale to be the set of integers, the same general result yields a
result for difference equations. However, since there are many other time scales than just
the set of real numbers or the set of integers, one has a much more general result. For
these reasons, based on the concept of almost periodic time scales proposed in \cite{19,20},
the concept of pseudo-almost periodic functions on almost periodic time
scales was formally introduced by Li and Wang (2012) in \cite{23}. Moreover, some first results
were proven which concern the   pseudo-almost periodic  solution
to   dynamic equations on time scales.
However, to the best of our knowledge, there is no  concept of weighted   pseudo-almost
periodic functions on time scales   yet, so up to now, there was no work on discussing weighted   pseudo-almost periodic
problems of   dynamic equations on time scales before.

Also,
it is well known that cellular neural networks have been extensively
applied in areas of signal processing, image processing, pattern
recognition, optimization and associative memories. Since all these
applications closely relate to the dynamics, the dynamical behaviors
of cellular neural networks have been widely investigated. There
have been extensive results on the problem of the existence and
stability of equilibrium points, periodic solutions  and almost periodic solutions of cellular neural networks in the
literature. We refer the reader to [17-27] and the references cited
therein. However, to the best of our knowledge, few
authors have studied the problems of  pseudo-almost periodic
and weighted pseudo-almost periodic solutions of   neural networks \cite{cw1,cw2}. Moreover, it is known that the
existence and stability of almost periodic solutions play a
key role in characterizing the behavior of dynamical systems (see
[30-36]) and pseudo-almost periodicity is a natural generalization of the notion of almost periodicity.
Furthermore, because of the weights
involved, the concept of weighted pseudo-almost periodicity is more general and
richer than the concept of pseudo-almost periodicity.

Motivated by the above discussion, in this paper, we first introduce the concept of weighted pseudo-almost periodic functions on time scales
and study some basic properties of weighted pseudo-almost periodic functions on time scales. Then,
we establish some results about the existence of weighted pseudo-almost periodic solutions to linear dynamic
equations on time scales. Finally,
  as an application of our results, we  study the existence
and global  exponential stability of weighted pseudo-almost periodic solutions for the following   cellular neural network
with discrete delays on time scales
\begin{equation}\label{e1}
x_i^{\Delta}(t)=-c_i(t)x_i(t)+\sum^n_{j=1}a_{ij}(t)f_j(x_j(t))+\sum^n_{j=1}b_{ij}(t)f_j(x_j(t-\gamma_{ij}))+I_i(t),\,\,\,t\in\mathbb{T},
\end{equation}
where $i=1,2,\ldots,n$ and $\mathbb{T}$ is an almost periodic time scale which will be
defined in the next section, $x_i(t)$ correspond to
the activations of the $i$th neurons at the time $t$, $c_i(t)$ are positive
functions, they denote the rate with which the cell $i$
reset their potential to the resting state when isolated from the
other cells and inputs at time $t$, $a_{ij}(t)$ and $b_{ij}(t)$ are
the connection weights at time $t$, $\gamma_{ij}$
are nonnegative, which corresponds to the finite speed of the axonal
signal transmission, $I_i(t)$ denote the external
inputs at time $t$, $f_i$ are the activation functions of
signal transmission. For each interval $J$ of $\mathbb{R},$ we
denote by $J_\mathbb{T}=J\cap\mathbb{T}$.

The system \eqref{e1} is supplemented with the initial values given by
\[
x_i(s)=\varphi_i(s),\,\,i=1,2,\ldots,n,
\]
where $\varphi_i(\cdot)$ denotes a real-value bounded
right-dense continuous function defined on $[-\gamma,0]_{\mathbb{T}}$, and
$
\gamma_i=\max_{1\leq j\leq
n}\{\gamma_{ij}\}, \gamma=\max_{1\leq i\leq
n}\{\gamma_i\}
$.

Throughout this paper, we assume that the following conditions hold:
\begin{itemize}
\item [$(H_1)$] $f_j\in C(\mathbb{R},\mathbb{R})$ and there
    exist positive constants
    $\alpha_j$ such that
    \[
      |f_j(u)-f_j(v)|\leq \alpha_j|u-v|,\quad u,v\in\mathbb{R},\,\,j=1,2,\ldots,n;
    \]
\item [$(H_2)$] $c_i,a_{ij},b_{ij}$
are almost periodic functions on
$\mathbb{T}$, where $i,j=1,2,\ldots,n$;
\item [$(H_3)$]
$\inf\limits_{t\in\mathbb{T}}c_i(t)>0,\,\,
-c_i\in \mathcal{R}^+,\gamma_{ij}\in\Pi,\,i,j=1,2,\ldots,n$,
where $\mathcal{R}^+,\,\Pi$ will be defined in the next section;
\item [$(H_4)$] For fixed $u\in\mathbb{U}_{\infty}^{Inv}$. $I_i\,(i=1,2,\ldots,n)$ are weighted pseudo-almost periodic functions,
 where $\mathbb{U}_{\infty}^{Inv}$ will be defined in the next section.
\end{itemize}

\begin{remark}
If   $\mathbb{T} = \mathbb{R}$, then \eqref{e1} reduces to the following form
\begin{equation}\label{ae12}
x_i'(t)=-c_i(t)x_i(t)+\sum^n_{j=1}a_{ij}(t)f_j(x_j(t))+\sum^n_{j=1}b_{ij}(t)f_j(x_j(t-\gamma_{ij}))+I_i(t),\,\,i=1,2,\ldots,n,\,\,t\in\mathbb{R},
\end{equation}
if  $\mathbb{T} = \mathbb{Z}$, then \eqref{e1} reduces to the following form
\begin{eqnarray}\label{ae13}
x_i(k+1)-x_i(k)&=&-c_i(k)x_i(t)+\sum^n_{j=1}a_{ij}(k)f_j(x_j(k))+\sum^n_{j=1}b_{ij}(t)f_j(x_j(k-\gamma_{ij}))\nonumber\\
&&+I_i(k),i=1,2,\ldots,n,\,\,k\in\mathbb{Z}.
\end{eqnarray}
To the best of our knowledge, there is no paper published on the existence and exponential
stability of  weighted pseudo-almost periodic solutions for \eqref{ae12} and \eqref{ae13}.
\end{remark}

The organization of the rest of this paper is as follows. In Section
2, we introduce some definitions and make some preparations for
later sections. In Section 3, we propose a concept of weighted pseudo-almost periodic functions on almost periodic time scales and study some their basic properties.
In Section 4, we study the existence of weighted pseudo-almost periodic solutions to linear dynamic equations on time scales.
In Section 5 and Section 6, based on the results obtained in the previous sections,   Banach's fixed
point theorem and $\Delta$-differential inequalities on time scales, we present
some sufficient conditions which guarantee the existence and global exponential  stability of weighted pseudo-almost periodic solutions to \eqref{e1}.
 In Section 7, we present examples to illustrate
the feasibility and effectiveness of our results obtained in Section 5 and Section 6.

\section{Preliminaries}\setcounter{equation}{0}
\vspace{1ex}
\indent

In  this section, we shall first recall some basic definitions and
prove some lemmas.

Let $\mathbb{T}$ be a nonempty closed subset (time scale) of
$\mathbb{R}$. The forward and backward jump operators $\sigma,
\rho:\mathbb{T}\rightarrow\mathbb{T}$ and the graininess
$\mu:\mathbb{T}\rightarrow\mathbb{R}^+$ are defined, respectively,
by
\[
\sigma(t)=\inf\{s\in\mathbb{T}:s>t\},\,\,\,\,
\rho(t)=\sup\{s\in\mathbb{T}:s<t\}\,\,\,\,{\rm
and}\,\,\,\,\mu(t)=\sigma(t)-t.
\]

A point $t\in\mathbb{T}$ is called left-dense if $t>\inf\mathbb{T}$
and $\rho(t)=t$, left-scattered if $\rho(t)<t$, right-dense if
$t<\sup\mathbb{T}$ and $\sigma(t)=t$, and right-scattered if
$\sigma(t)>t$. If $\mathbb{T}$ has a left-scattered maximum $m$,
then $\mathbb{T}^k=\mathbb{T}\setminus\{m\}$; otherwise
$\mathbb{T}^k=\mathbb{T}$. If $\mathbb{T}$ has a right-scattered
minimum $m$, then $\mathbb{T}_k=\mathbb{T}\setminus\{m\}$; otherwise
$\mathbb{T}_k=\mathbb{T}$.

A function $f:\mathbb{T}\rightarrow\mathbb{R}$ is right-dense
continuous provided it is continuous at right-dense point in
$\mathbb{T}$ and its left-side limits exist at left-dense points in
$\mathbb{T}$. If $f$ is continuous at each right-dense point and
each left-dense point, then $f$ is said to be continuous function on
$\mathbb{T}$.

For $y:\mathbb{T}\rightarrow\mathbb{R}$ and $t\in\mathbb{T}^k$, we
define the delta derivative of $y(t)$, $y^\Delta(t)$, to be the
number (if it exists) with the property that for a given
$\varepsilon>0$, there exists a neighborhood $U$ of $t$ such that
\[
|[y(\sigma(t))-y(s)]-y^\Delta(t)[\sigma(t)-s]|<\varepsilon|\sigma(t)-s|
\]
for all $s\in U$.

If $y$ is continuous, then $y$ is right-dense continuous, and if $y$
is delta differentiable at $t$, then $y$ is continuous at $t$.

Let $y$ be right-dense continuous. If $Y^{\Delta}(t)=y(t)$, then we
define the delta integral by
$\int_a^{t}y(s)\Delta s=Y(t)-Y(a).$

A function $r:\mathbb{T}\rightarrow\mathbb{R}$ is called regressive
if
$
1+\mu(t)r(t)\neq 0
$
for all $t\in \mathbb{T}^k$. The set of all regressive and
right-dense continuous functions $r:\mathbb{T}\rightarrow\mathbb{R}$ will
be denoted by $\mathcal{R}=\mathcal{R}(\mathbb{T})=\mathcal{R}(\mathbb{T},\mathbb{R})$. We
define the set
$\mathcal{R}^+=\mathcal{R}^+(\mathbb{T},\mathbb{R})=\{r\in \mathcal{R}:1+\mu(t)r(t)>0,\,\,\forall
t\in\mathbb{T}\}$.

If $r$ is regressive function, then the generalized exponential
function $e_r$ is defined by
\[
e_r(t,s)=\exp\bigg\{\int_s^t
\xi_{\mu(\tau)}(r(\tau))\Delta\tau\bigg\},\,\,\,{\rm for}\,s,t\in
\mathbb{T},
\]
with the cylinder transformation
\[
\xi_h(z)=\bigg\{\begin{array}{ll} \frac{\mathrm{Log}(1+hz)}{h} &{\rm
if}\,h\neq
0,\\
z &{\rm if}\,h=0.\\
\end{array}
\]
Let $p,q:\mathbb{T}\rightarrow\mathbb{R}$ be two regressive
functions, we define
\[
p\oplus q:=p+q+\mu pq,\,\,\,\,\ominus p:=-\frac{p}{1+\mu
p},\,\,\,\,p\ominus q:=p\oplus(\ominus q).
\]
Then the generalized exponential function has the following
properties.

\begin{lemma}\cite{14,15}
 Assume that $p,q:\mathbb{T}\rightarrow\mathbb{R}$ are two
regressive functions, then
\begin{itemize}
    \item  [$(i)$]  $e_0(t,s)\equiv 1$ and $e_p(t,t)\equiv 1$;
    \item  [$(ii)$] $e_p(\sigma(t),s)=(1+\mu(t)p(t))e_p(t,s)$;
    \item  [$(iii)$]$e_p(t,\sigma(s))=\frac{e_p(t,s)}{1+\mu(s)p(s)}$;
    \item  [$(iv)$] $\frac{1}{e_p(t,s)}=e_{\ominus p}(t,s)$;
    \item  [$(v)$] $(e_{\ominus p}(t,s))^{\Delta}=(\ominus p)(t)e_{\ominus
    p}(t,s)$.
\end{itemize}
\end{lemma}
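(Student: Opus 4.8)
The plan is to argue directly from the definition $e_r(t,s)=\exp\{\int_s^t\xi_{\mu(\tau)}(r(\tau))\Delta\tau\}$ and from the cylinder transformation $\xi_h$, using the single structural fact about the $\Delta$-integral that over a scattered step $\int_t^{\sigma(t)}g(\tau)\Delta\tau=\mu(t)g(t)$. With that evaluation rule in hand, each item reduces to an algebraic identity for $\xi_h$ followed by exponentiation. For $(i)$, setting $r\equiv 0$ makes $\xi_{\mu(\tau)}(0)=0$ for every $\tau$ (both branches of $\xi_h$ vanish), so the exponent is zero and $e_0(t,s)\equiv 1$; likewise $e_p(t,t)=\exp\{\int_t^t\cdots\}=\exp 0=1$ since the integral over a degenerate interval vanishes.

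For $(ii)$ I would split $\int_s^{\sigma(t)}=\int_s^t+\int_t^{\sigma(t)}$ and evaluate the second piece by the step rule: $\int_t^{\sigma(t)}\xi_{\mu(\tau)}(p(\tau))\Delta\tau=\mu(t)\xi_{\mu(t)}(p(t))=\mathrm{Log}(1+\mu(t)p(t))$ when $t$ is right-scattered, the right-dense case $\mu(t)=0$ being trivial. Exponentiating yields $e_p(\sigma(t),s)=(1+\mu(t)p(t))e_p(t,s)$. Item $(iii)$ is the mirror computation: writing $\int_{\sigma(s)}^t=\int_s^t-\int_s^{\sigma(s)}$ produces the factor $(1+\mu(s)p(s))^{-1}$ in the denominator.

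For $(iv)$ the crux is the identity $\xi_h(\ominus p)=-\xi_h(p)$: substituting $\ominus p=-p/(1+\mu p)$ and simplifying $1+h(\ominus p)=(1+hp)^{-1}$ shows the two cylinder values are negatives of one another, so the exponents cancel and $e_{\ominus p}(t,s)=1/e_p(t,s)$. For $(v)$ I would first establish the fundamental relation $e_r^\Delta(t,s)=r(t)e_r(t,s)$ by cases: at a right-dense point $\mu(t)=0$ and $\xi_0(r)=r$, so the classical differentiation of $\exp$ of an integral applies, while at a right-scattered point the delta derivative is the difference quotient $[e_r(\sigma(t),s)-e_r(t,s)]/\mu(t)$, which equals $r(t)e_r(t,s)$ by $(ii)$. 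Applying this with $r=\ominus p$—which is itself regressive because $1+\mu(\ominus p)=(1+\mu p)^{-1}\neq 0$—delivers $(v)$.

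The only genuine subtlety, and the step I would treat most carefully, is justifying the step-integral rule $\int_t^{\sigma(t)}g\,\Delta\tau=\mu(t)g(t)$ and handling the right-dense versus right-scattered dichotomy uniformly throughout; once that single evaluation is secured, $(i)$–$(iv)$ follow immediately and $(v)$ collapses to the difference-quotient argument via $(ii)$.
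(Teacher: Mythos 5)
The paper offers no proof of this lemma at all: it is stated as a known result and cited directly to the Bohner--Peterson monographs [14,15], so there is no in-paper argument to compare yours against. What you have written is, in substance, the standard derivation from those references: the cylinder definition plus additivity of the $\Delta$-integral and the evaluation rule $\int_t^{\sigma(t)}g(\tau)\Delta\tau=\mu(t)g(t)$ do give $(i)$--$(iii)$ exactly as you describe, and your two-case difference-quotient argument for the general identity $e_r^{\Delta}(t,s)=r(t)e_r(t,s)$, applied to $r=\ominus p$ (which is regressive because $1+\mu(\ominus p)=(1+\mu p)^{-1}\neq 0$), gives $(v)$.

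Two technical points deserve repair if you write this out in full. First, in $(iv)$ your key pointwise identity $\xi_h(\ominus p)=-\xi_h(p)$ is false as stated for merely regressive $p$: one can have $1+hp<0$, in which case $\mathrm{Log}$ is the principal complex logarithm and $\mathrm{Log}\bigl(1/(1+hp)\bigr)=-\mathrm{Log}(1+hp)+2\pi i$, so the two cylinder values differ by $2\pi i/h$ rather than being exact negatives. The conclusion $e_{\ominus p}(t,s)=1/e_p(t,s)$ survives, because each right-scattered point with $1+\mu p<0$ contributes exactly $2\pi i$ to the exponent of $e_{\ominus p}(t,s)\,e_p(t,s)$ and $\exp(2\pi i)=1$; but your proof should either carry this $2\pi i$ discrepancy explicitly through the integral or restrict to positively regressive $p$ (the class $\mathcal{R}^+$, which is all the paper ever uses). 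Second, in the right-dense case of $(v)$ the appeal to ``classical differentiation of $\exp$ of an integral'' needs the hypothesis that $\tau\mapsto\xi_{\mu(\tau)}(r(\tau))$ is rd-continuous, so that the fundamental theorem of $\Delta$-calculus yields the inner derivative $\xi_{\mu(t)}(r(t))=\xi_0(r(t))=r(t)$ at a right-dense $t$; this holds because the graininess $\mu$ is rd-continuous and $(h,z)\mapsto\xi_h(z)$ is continuous at $h=0$, but it should be said. With these two points patched, your argument is a complete and correct proof of the cited lemma.
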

\begin{definition} \cite{21}
For every $x,y\in\mathbb{R}$, $[x,y)=\{t\in\mathbb{R}:x\leq t<y\}$, define a countably additive measure $m_1$ on the set
$
\mathfrak{F_1}=\{[\tilde{a},\tilde{b})\cap\mathbb{T}:\tilde{a},\tilde{b}\in\mathbb{T},\tilde{a}\leq\tilde{b}\},
$
that assigns to each interval $[\tilde{a},\tilde{b})\cap\mathbb{T}$ its length, that is
$
m_1([\tilde{a},\tilde{b})=\tilde{b}-\tilde{a}.
$
The interval $[\tilde{a},\tilde{a})$ is understood as the empty set. Using $m_1$, it generates the outer measure $m_1^{*}$
on $\textrm{P}(\mathbb{T})$, defined for each $E\in\textrm{P}(\mathbb{T})$ as
 {\setlength\arraycolsep{2pt}
\begin{eqnarray*}
m_1^{*}(E)=\left\{\begin{array}{lll}
\inf_{\tilde{\Re}}\big\{\sum_{i\in I_{\tilde{\Re}}}(\tilde{b_i}-\tilde{a_i})\big\}\in\mathbb{R}^+,\,\,b\in \mathbb{T}\setminus E,\\
+\infty, \,\,\,\,\,\,\,\,\,\,\,\,\,\,\,\,\,\,\,\,\,\,\,\,\,\,\,\,\,\,\,\,\,\,\,\,\,\,\,\,\,\,\,\,\,\,\,\,\,\,\,\,\,\,\,\,\,\,\,\,\,\,b\in E,
\end{array}\right.
\end{eqnarray*}}
with
\[
\tilde{\Re}=\bigg\{\{[\tilde{a_i},\tilde{b_i})\cap\mathbb{T}\in\mathfrak{F_1}\}_{i\in I_{\tilde{\Re}}}:I_{\tilde{\Re}}\subset
\mathbb{N},E\subset\bigcup_{i\in I_{\tilde{\Re}}}([a_i,b_i)\cap\mathbb{T})\bigg\}.
\]
A set $\wedge\subset\mathbb{T}$ is said to be $\Delta$-measurable if the following equality:
\[
m_1^{*}(E)=m_1^{*}(E\cap\wedge)+m_1^{*}(E\cap(\mathbb{T}\backslash\wedge))
\]
holds true for all subset $E$ of $\mathbb{T}$. Define the family
$
\mathcal{M}(m_1^{*})=\{\wedge\subset\mathbb{T}:\wedge$ is $\Delta$-measurable$\}$, the Lebesgue $\Delta$-measure, denoted by
$\mu_{\Delta}$ is the restriction of $m_1^{*}$ to $\mathcal{M}(m_1^{*})$.
\end{definition}
\begin{definition} \cite{21}
We say that $f:\mathbb{T}\rightarrow\bar{\mathbb{R}}\equiv[-\infty,+\infty]$ is $\Delta$-measurable if for every $\alpha\in\mathbb{R}$, the set
$
f^{-1}([-\infty,\alpha))=\{t\in\mathbb{T}:f(t)<\alpha\}
$
is $\Delta$-measurable.
\end{definition}
\begin{lemma} \cite{21}\label{lem22}
Let $\wedge\subset\mathbb{T}$. Then $\wedge$ is $\Delta$-measurable if and only if $\wedge$ is Lebesgue measurable.
\end{lemma}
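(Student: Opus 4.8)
The plan is to compare the $\Delta$-outer measure $m_1^{*}$ on subsets of $\mathbb{T}$ with the ordinary Lebesgue outer measure $\lambda^{*}$ on $\mathbb{R}$, and to deduce from a single reduction formula that the Carath\'eodory condition defining $\Delta$-measurability is equivalent to the one defining Lebesgue measurability. First I would record two structural facts: since $\mathbb{T}$ is a closed subset of $\mathbb{R}$ it is itself Lebesgue measurable, and its set of right-scattered points $\{t_i\}_{i\in I}$ is at most countable, the associated gaps $(t_i,\sigma(t_i))$ being pairwise disjoint open intervals whose union is exactly $\mathbb{R}\setminus\mathbb{T}$ (together with at most two unbounded rays when $\mathbb{T}$ is bounded).

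The crux is the identity
\[
m_1^{*}(A)=\lambda^{*}(A)+\sum_{i\,:\,t_i\in A}(\sigma(t_i)-t_i),\qquad A\subseteq\mathbb{T},
\]
which I would establish by two inequalities. For ``$\geq$'', any admissible cover $\{[\tilde a_k,\tilde b_k)\cap\mathbb{T}\}$ of $A$ lifts to the cover $\{[\tilde a_k,\tilde b_k)\}$ of $A$ in $\mathbb{R}$ of equal total length; since the endpoints lie in $\mathbb{T}$, an interval that captures a right-scattered point $t_i\in A$ must reach at least to $\sigma(t_i)$ and hence also swallow the whole gap $(t_i,\sigma(t_i))$, so the lifted cover covers $A\cup\bigcup_{t_i\in A}(t_i,\sigma(t_i))$, a disjoint union of Lebesgue measure $\lambda^{*}(A)+\sum_{t_i\in A}(\sigma(t_i)-t_i)$. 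For ``$\leq$'', one starts from a near-optimal Lebesgue cover of this same set and adjusts each endpoint to a neighbouring point of $\mathbb{T}$, the closedness of $\mathbb{T}$ guaranteeing such points exist; carrying out this endpoint correction while controlling the incurred lengths is the main technical obstacle of the argument (one also has to treat the boundary case $\sup\mathbb{T}\in A$ separately and to allow for the sum being $+\infty$).

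Granting the identity, the equivalence falls out cleanly. Applying it to $E$, to $E\cap\wedge$, and to $E\cap(\mathbb{T}\setminus\wedge)$ for an arbitrary $E\subseteq\mathbb{T}$, and using that the right-scattered points of $E$ split as those in $\wedge$ and those in $\mathbb{T}\setminus\wedge$, the three graininess sums cancel. Thus the $\Delta$-measurability condition $m_1^{*}(E)=m_1^{*}(E\cap\wedge)+m_1^{*}(E\cap(\mathbb{T}\setminus\wedge))$ holds for all $E\subseteq\mathbb{T}$ if and only if
\[
\lambda^{*}(E)=\lambda^{*}(E\cap\wedge)+\lambda^{*}(E\setminus\wedge)\qquad\text{for all }E\subseteq\mathbb{T}.
\]

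Finally I would bootstrap this restricted Carath\'eodory splitting to a genuine one over all test sets. Since $\mathbb{T}$ is Lebesgue measurable and $\wedge\subseteq\mathbb{T}$, for an arbitrary $F\subseteq\mathbb{R}$ I split $F$ with $\mathbb{T}$, apply the displayed condition to $F\cap\mathbb{T}\subseteq\mathbb{T}$, and recombine using $F\cap\wedge=(F\cap\mathbb{T})\cap\wedge$ and $F\setminus\wedge=(F\setminus\mathbb{T})\cup((F\cap\mathbb{T})\setminus\wedge)$; this shows $\lambda^{*}(F)=\lambda^{*}(F\cap\wedge)+\lambda^{*}(F\setminus\wedge)$ for every $F$, i.e. $\wedge$ is Lebesgue measurable. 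The converse inclusion is immediate, since a Lebesgue measurable $\wedge$ splits every test set and in particular every $E\subseteq\mathbb{T}$. Combining the two equivalences yields the claim.
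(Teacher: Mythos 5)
The paper never proves this lemma: it is imported verbatim from \cite{21} (Cabada and Vivero), so there is no internal proof to compare against. The proof in that reference runs exactly along the lines you propose, namely through the comparison formula $m_1^{*}(A)=\lambda^{*}(A)+\sum_{t_i\in A}(\sigma(t_i)-t_i)$ relating the $\Delta$-outer measure to the Lebesgue outer measure plus the graininess of the right-scattered points lying in $A$, followed by cancellation in the Carath\'eodory condition. Your overall architecture is therefore the right one, and two of its three pieces are sound as written: the ``$\geq$'' inequality (an admissible time-scale cover swallows the gaps at right-scattered points of $A$, and additivity over $A\cup G$ is legitimate because the union of gaps $G$ is open, hence measurable) and the final bootstrap from test sets inside $\mathbb{T}$ to arbitrary test sets $F\subseteq\mathbb{R}$, which is just two applications of the Carath\'eodory property of the closed, hence measurable, set $\mathbb{T}$.

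There is, however, a genuine gap at the crux: the ``$\leq$'' half of the identity, which you explicitly set aside as ``the main technical obstacle''. This is not a routine endpoint perturbation, because moving the right endpoint of a covering interval to a point of $\mathbb{T}$ may force you past a right-scattered point $t_i\in A$ all the way up to $\sigma(t_i)$, at a cost of $\mu(t_i)$ that closedness alone does not control. The missing idea is the compensation mechanism: take the near-optimal Lebesgue cover of $A\cup G$ to be an open set $U$ and work with its connected components $(c_k,d_k)$. If $s_k=\sup\bigl(A\cap(c_k,d_k)\bigr)$ lies in $A$ and is right-scattered, the gap $(s_k,\sigma(s_k))\subseteq G\subseteq U$ is a connected set meeting the component $(c_k,d_k)$, hence is contained in it, so $\sigma(s_k)\leq d_k$ and the time-scale interval $\bigl[\inf(A\cap(c_k,d_k)),\sigma(s_k)\bigr)$ costs no more than $d_k-c_k$; if $s_k$ is right-dense, points of $\mathbb{T}$ accumulate from the right and the overshoot can be kept below $\varepsilon 2^{-k}$. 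Without this accounting the identity, and with it the whole lemma, remains unproven. A second, smaller gap: ``the three graininess sums cancel'' is valid only when those sums are finite. For test sets $E$ with $\sum_{t_i\in E}\mu(t_i)=+\infty$ (e.g.\ every infinite subset of $\mathbb{T}=\mathbb{Z}$) the $m_1^{*}$-condition reads $\infty=\infty$ and yields no information about $\lambda^{*}$; you must treat this case separately, for instance by verifying the Lebesgue condition on the bounded truncations $E\cap[-n,n]$ (whose graininess sums are finite) and letting $n\to\infty$, and likewise keep the case $\sup\mathbb{T}\in E$, where $m_1^{*}(E)=+\infty$ by definition, outside the identity rather than merely ``separate''.
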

By using Lemma \ref{lem22}, we can get the following two corollaries:
\begin{corollary}\label{cl21}
If $A$ is a closed subset of $\mathbb{R}$, then $A\cap\mathbb{T}$ is $\Delta$-measurable.
\end{corollary}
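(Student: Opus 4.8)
The plan is to reduce everything to Lemma \ref{lem22}, which already identifies $\Delta$-measurability of subsets of $\mathbb{T}$ with ordinary Lebesgue measurability. So the only work is to check that $A\cap\mathbb{T}$ qualifies as a Lebesgue measurable subset of $\mathbb{T}$, and then quote the lemma.

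First I would observe that $A$, being closed in $\mathbb{R}$, is a Borel set and therefore Lebesgue measurable. Next, recall that the time scale $\mathbb{T}$ is by definition a nonempty closed subset of $\mathbb{R}$, so $\mathbb{T}$ is likewise Lebesgue measurable. Since the collection of Lebesgue measurable sets forms a $\sigma$-algebra, it is in particular closed under finite intersections; hence $A\cap\mathbb{T}$ is a Lebesgue measurable subset of $\mathbb{R}$. Because $A\cap\mathbb{T}\subset\mathbb{T}$, the set $A\cap\mathbb{T}$ is a subset of $\mathbb{T}$ that is Lebesgue measurable, and so Lemma \ref{lem22} (applied with $\wedge=A\cap\mathbb{T}$) immediately gives that $A\cap\mathbb{T}$ is $\Delta$-measurable.

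There is essentially no genuine obstacle here: the corollary is a direct consequence of Lemma \ref{lem22} together with the elementary facts that closed sets are Lebesgue measurable and that Lebesgue measurability is preserved under intersection. The only point that deserves a word of care is verifying that the hypothesis of Lemma \ref{lem22} is met, namely that the set in question lies inside $\mathbb{T}$ — which is automatic for $A\cap\mathbb{T}$ — and that $\mathbb{T}$ itself is closed (hence measurable), which is part of the standing definition of a time scale.
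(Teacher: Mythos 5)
Your proof is correct and follows exactly the route the paper intends: the paper states this corollary as an immediate consequence of Lemma \ref{lem22}, and your argument simply fills in the routine details (closed sets are Lebesgue measurable, measurability is preserved under intersection, and $A\cap\mathbb{T}\subset\mathbb{T}$ so the lemma applies with $\wedge=A\cap\mathbb{T}$). Nothing is missing.
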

\begin{corollary}\label{cl22}
If $f\in C(\mathbb{T},\mathbb{R})$, then $f$ is $\Delta$-measurable.
\end{corollary}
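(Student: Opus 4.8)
The plan is to reduce the claim to Corollary \ref{cl21} via the definition of $\Delta$-measurability, together with the fact that the family of $\Delta$-measurable sets forms a $\sigma$-algebra. By the definition of a $\Delta$-measurable function, it suffices to show that for every $\alpha\in\mathbb{R}$ the sublevel set $\{t\in\mathbb{T}:f(t)<\alpha\}$ belongs to $\mathcal{M}(m_1^{*})$. Rather than work with this open-type set directly, I would pass to its complement in $\mathbb{T}$, namely $A_\alpha:=\{t\in\mathbb{T}:f(t)\geq\alpha\}=f^{-1}([\alpha,+\infty))$, which I expect to be closed, and then invoke Corollary \ref{cl21}.

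The first and main step is to prove that $A_\alpha$ is a closed subset of $\mathbb{R}$. Here I would argue sequentially: let $t_n\in A_\alpha$ with $t_n\to t^{*}$ in $\mathbb{R}$. Since $\mathbb{T}$ is a closed subset of $\mathbb{R}$, we have $t^{*}\in\mathbb{T}$. If $t^{*}$ is both left-scattered and right-scattered it is isolated in $\mathbb{T}$, so $t_n=t^{*}$ eventually and $t^{*}\in A_\alpha$ trivially; otherwise the existence of the approximating sequence $t_n\to t^{*}$ forces $t^{*}$ to be right-dense or left-dense, so the continuity of $f$ on $\mathbb{T}$ applies at $t^{*}$ and yields $f(t_n)\to f(t^{*})$. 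Passing to the limit in $f(t_n)\geq\alpha$ gives $f(t^{*})\geq\alpha$, i.e. $t^{*}\in A_\alpha$. Hence $A_\alpha$ is closed in $\mathbb{R}$. I expect this to be the only genuine obstacle, since it is where the time-scale notion of continuity (continuity at left-dense and right-dense points) must be matched with ordinary sequential continuity, and where the closedness of $\mathbb{T}$ is essential to keep the limit point inside $\mathbb{T}$.

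Once $A_\alpha$ is known to be closed in $\mathbb{R}$, the remainder is routine. Since $A_\alpha\subseteq\mathbb{T}$ we have $A_\alpha=A_\alpha\cap\mathbb{T}$, so Corollary \ref{cl21} gives that $A_\alpha$ is $\Delta$-measurable. The collection $\mathcal{M}(m_1^{*})$ is a $\sigma$-algebra on $\mathbb{T}$ containing $\mathbb{T}$ itself, hence closed under complementation; therefore $\{t\in\mathbb{T}:f(t)<\alpha\}=\mathbb{T}\setminus A_\alpha$ is $\Delta$-measurable as well. As $\alpha\in\mathbb{R}$ was arbitrary, the defining condition of $\Delta$-measurability is satisfied, and thus $f$ is $\Delta$-measurable, which completes the proof.
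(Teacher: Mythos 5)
Your proof is correct, but it is packaged differently from the route the paper intends. The paper gives no explicit argument at all: it simply asserts that Corollaries \ref{cl21} and \ref{cl22} follow "by using Lemma \ref{lem22}" (the equivalence of $\Delta$-measurability and Lebesgue measurability), the intended one-line proof being that continuity of $f$ on $\mathbb{T}$ --- which, under the paper's definition, amounts to continuity in the subspace topology, since isolated points are automatically continuity points --- makes each sublevel set $\{t\in\mathbb{T}:f(t)<\alpha\}$ relatively open, i.e. of the form $U\cap\mathbb{T}$ with $U\subset\mathbb{R}$ open, hence Lebesgue measurable and therefore $\Delta$-measurable by Lemma \ref{lem22}. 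You instead work with the superlevel set $A_\alpha=\{t\in\mathbb{T}:f(t)\geq\alpha\}$, prove it is closed in $\mathbb{R}$ by an honest case analysis reconciling time-scale continuity with sequential limits, apply Corollary \ref{cl21}, and pass to the complement. What your version buys is self-containedness: it uses only the already stated Corollary \ref{cl21} rather than re-invoking the Lebesgue equivalence, and your closedness argument is exactly the step the paper glosses over. Note also that your complementation step needs even less than the full $\sigma$-algebra theorem, because the Carath\'{e}odory condition defining $\Delta$-measurability is manifestly symmetric in $\wedge$ and $\mathbb{T}\setminus\wedge$, so a set is $\Delta$-measurable if and only if its complement in $\mathbb{T}$ is. One small caveat: your dichotomy (isolated point versus left- or right-dense point) is not exhaustive when $t^{*}$ is a finite endpoint of $\mathbb{T}$ (for instance $t^{*}=\max\mathbb{T}$ is neither right-dense nor right-scattered under the paper's conventions); the conclusion there is trivial, since any sequence in $\mathbb{T}$ converging to such a point is eventually constant, and in any case the paper restricts attention to almost periodic time scales, which are unbounded in both directions, so this case never arises.
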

\begin{theorem} \cite{21}\label{thm21}
Let $E\subset\mathbb{T}$ be a $\Delta$-measurable set and let $(f_m)_{m\in\mathbb{N}}$ be a sequence of $\Delta$-measurable functions such that
for every $t\in\mathbb{T}$ the following conditions are satisfied:
\begin{itemize}
\item [$(a)$] $0\leq f_m(t)\leq f_{m+1}(t)\leq\infty$ for all $m\in\mathbb{N}$;
\item [$(b)$] $\lim\limits_{m\rightarrow \infty}f_m(t)=f(t)$.
\end{itemize}
Then $f$ is $\Delta$-measurable and
$
\lim_{m\rightarrow\infty}\int_{E}f_m(s)\Delta s=\int_{E}f(s)\Delta s.
$
\end{theorem}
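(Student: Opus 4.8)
The plan is to prove this as the Monotone Convergence (Beppo Levi) theorem for the measure space $(\mathbb{T},\mathcal{M}(m_1^{*}),\mu_\Delta)$, whose ingredients have all been assembled in Definition 2.1 and Lemma \ref{lem22}. Since $m_1$ is countably additive and $\mu_\Delta$ is the restriction of the induced outer measure $m_1^{*}$ to the $\sigma$-algebra $\mathcal{M}(m_1^{*})$, the triple $(\mathbb{T},\mathcal{M}(m_1^{*}),\mu_\Delta)$ is a genuine complete measure space, so the standard theory of nonnegative measurable functions and their integrals transplants to the time-scale setting. Accordingly I would carry out the classical two-part argument.

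First I would establish the $\Delta$-measurability of $f$. Because the sequence is nondecreasing by $(a)$ and convergent by $(b)$, we have $f(t)=\sup_{m}f_m(t)$ for every $t\in\mathbb{T}$. Hence for each $\alpha\in\mathbb{R}$,
\[
\{t\in\mathbb{T}:f(t)>\alpha\}=\bigcup_{m\in\mathbb{N}}\{t\in\mathbb{T}:f_m(t)>\alpha\},
\]
a countable union of $\Delta$-measurable sets, which is again $\Delta$-measurable since $\mathcal{M}(m_1^{*})$ is a $\sigma$-algebra. Passing to complements and countable intersections then shows that $\{t\in\mathbb{T}:f(t)<\alpha\}$ is $\Delta$-measurable for every $\alpha$, i.e.\ $f$ is $\Delta$-measurable in the sense of Definition 2.2.

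For the convergence of the integrals, set $L=\lim_{m\to\infty}\int_E f_m(s)\,\Delta s$; this limit exists in $[0,\infty]$ because monotonicity of the $\Delta$-integral together with $(a)$ makes $(\int_E f_m\,\Delta s)_m$ nondecreasing, and the same monotonicity against $f_m\le f$ gives at once $L\le\int_E f(s)\,\Delta s$. The substance of the proof is the reverse inequality. Here I would fix a $\Delta$-simple function $\varphi$ with $0\le\varphi\le f$ on $E$ and a constant $c\in(0,1)$, and put $E_m=\{t\in E:f_m(t)\ge c\,\varphi(t)\}$. By $(a)$ these sets increase, and by $(b)$ together with $c<1$ they exhaust $E$, so $E_m\uparrow E$. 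Then
\[
\int_E f_m(s)\,\Delta s\ \ge\ \int_{E_m}f_m(s)\,\Delta s\ \ge\ c\int_{E_m}\varphi(s)\,\Delta s.
\]
Letting $m\to\infty$ and invoking continuity of $\mu_\Delta$ from below gives $L\ge c\int_E\varphi(s)\,\Delta s$; letting $c\to1^{-}$ and taking the supremum over all $\Delta$-simple $\varphi\le f$ yields $L\ge\int_E f(s)\,\Delta s$. Combining the two inequalities proves $L=\int_E f(s)\,\Delta s$.

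The step I expect to be the main obstacle is this reverse inequality, and within it the passage $\int_{E_m}\varphi\,\Delta s\to\int_E\varphi\,\Delta s$: this is exactly where one needs that $\mu_\Delta$ is continuous from below on $\mathcal{M}(m_1^{*})$, which in turn rests on the countable additivity asserted in Definition 2.1. Everything else is routine bookkeeping once one grants that $(\mathbb{T},\mathcal{M}(m_1^{*}),\mu_\Delta)$ behaves as an ordinary measure space — a fact underwritten by Lemma \ref{lem22}, which identifies the $\Delta$-measurable sets with the Lebesgue measurable ones and thereby makes the entire abstract integration theory available on $\mathbb{T}$.
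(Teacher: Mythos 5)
Your proof is sound, but note that there is no proof in the paper to compare it against: Theorem \ref{thm21} is imported verbatim from Cabada--Vivero \cite{21} with a citation and no argument. What you have written is the classical Beppo Levi proof transplanted to the measure space $(\mathbb{T},\mathcal{M}(m_1^{*}),\mu_{\Delta})$: measurability of $f=\sup_m f_m$ through countable set operations, the easy inequality $L\le\int_E f\,\Delta s$ by monotonicity, and the reverse inequality via the sets $E_m=\{t\in E: f_m(t)\ge c\,\varphi(t)\}\uparrow E$ together with continuity of $\mu_{\Delta}$ from below. The route the paper's framework actually points to is different: Lemma \ref{lem22} identifies $\Delta$-measurable sets with Lebesgue measurable sets, and the cited source proves precisely that the Lebesgue $\Delta$-integral can be expressed as a usual Lebesgue integral, after which the theorem is just the classical monotone convergence theorem read back through that identification. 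Your direct argument buys self-containedness — it needs only Carath\'eodory's theorem (the restriction of an outer measure to its measurable sets is a complete measure), not the integral correspondence — whereas the transfer argument is shorter but rests on that nontrivial correspondence. Two precision points in your write-up: continuity from below of $\mu_{\Delta}$ follows from countable additivity of $\mu_{\Delta}$ on $\mathcal{M}(m_1^{*})$, which is Carath\'eodory's theorem applied to $m_1^{*}$, not directly from the countable additivity of $m_1$ on intervals asserted in Definition 2.1; and your reverse inequality presupposes that $\int_E f\,\Delta s$ for nonnegative measurable $f$ is \emph{defined} as the supremum of integrals of $\Delta$-simple functions dominated by $f$ — a definition the paper never states (it only defines the antiderivative-based integral for rd-continuous functions), so it, too, must be taken from \cite{21}.
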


\begin{definition} \cite{19,20}
A time scale is called an almost periodic time scale if
\[
\Pi:=\{\tau\in\mathbb{R}:t\pm\tau\in\mathbb{T},\forall
t\in\mathbb{T}\}\neq\{0\}.
\]
\end{definition}

In this paper, we restrict our discussions on almost periodic time scales.

\begin{lemma}\cite{m1}\label{lem23}
If $\mathbb{T}$ is an almost periodic time scale and $\tau\in\Pi$, then $\sigma(t+\tau)=\sigma(t)+\tau$ for  $t\in\mathbb{T}$.
\end{lemma}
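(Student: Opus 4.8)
The plan is to exploit the fact that translation by an element $\tau\in\Pi$ is an order-preserving bijection of $\mathbb{T}$ onto itself, and that the forward jump operator is defined by an infimum, which interacts transparently with such a translation. First I would record two preliminary facts. Observe that the defining condition $t\pm\tau\in\mathbb{T}$ for all $t\in\mathbb{T}$ is symmetric in the sign of $\tau$, so $\tau\in\Pi$ forces $-\tau\in\Pi$; hence the map $\phi:\mathbb{T}\to\mathbb{T}$, $\phi(s)=s+\tau$, is well defined with inverse $s\mapsto s-\tau$, and it is a strictly increasing bijection of $\mathbb{T}$. Moreover, since $\tau\neq 0$ and $t+n\tau\in\mathbb{T}$ for every integer $n$ (by induction on the definition of $\Pi$), the set $\mathbb{T}$ is unbounded above, so for each $t\in\mathbb{T}$ the set $\{s\in\mathbb{T}:s>t\}$ is nonempty and bounded below by $t$; thus $\sigma(t)$ is a finite real number and there are no issues with infinite values.

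The core step is a change of variable inside the infimum. Starting from $\sigma(t+\tau)=\inf\{s\in\mathbb{T}:s>t+\tau\}$, I would establish the set identity $\{s\in\mathbb{T}:s>t+\tau\}=\{s'+\tau:s'\in\mathbb{T},\,s'>t\}$. The inclusion $\supseteq$ is immediate, since $\tau\in\Pi$ gives $s'+\tau\in\mathbb{T}$ and $s'>t$ gives $s'+\tau>t+\tau$; the reverse inclusion $\subseteq$ uses $-\tau\in\Pi$, writing any admissible $s$ as $s=(s-\tau)+\tau$ with $s-\tau\in\mathbb{T}$ and $s-\tau>t$. Taking infima of both sides and using the translation invariance of the infimum, $\inf(\tau+A)=\tau+\inf A$, I obtain $\sigma(t+\tau)=\tau+\inf\{s'\in\mathbb{T}:s'>t\}=\tau+\sigma(t)$, which is the assertion.

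Since every step is a routine manipulation, there is no serious obstacle here; the only points requiring genuine care are the two mentioned above, namely verifying the symmetry $\Pi=-\Pi$ so that the translation is a bijection of $\mathbb{T}$ onto itself (and not merely a map into a larger set), and confirming the set identity so that the infimum substitution is legitimate. Everything else follows from elementary properties of the infimum.
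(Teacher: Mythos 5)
Your proof is correct. Note that the paper does not prove this lemma at all---it is quoted from \cite{m1}---so there is no in-paper proof to compare against; your argument (the symmetry $\Pi=-\Pi$ from the $\pm$ in the definition, hence translation by $\tau$ is a bijection of $\mathbb{T}$ onto itself, followed by the change of variable $\{s\in\mathbb{T}:s>t+\tau\}=\tau+\{s'\in\mathbb{T}:s'>t\}$ inside the infimum) is precisely the standard proof of this fact, and every step checks out. The only cosmetic point is that your unboundedness remark implicitly assumes $\tau\neq 0$; for $\tau=0$ the statement is trivial, so nothing is lost.
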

\begin{corollary}
If $\mathbb{T}$ is an almost periodic time scale, then $\mu(t+\tau)=\mu(t),\,\forall t\in\mathbb{T},\,\tau\in\Pi$.
\end{corollary}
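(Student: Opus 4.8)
The plan is to obtain this as an immediate consequence of Lemma \ref{lem23} together with the definition of the graininess function. Recall that by definition $\mu(t)=\sigma(t)-t$ for every $t\in\mathbb{T}$. Fixing an arbitrary $t\in\mathbb{T}$ and $\tau\in\Pi$, the first thing I would check is that the left-hand side $\mu(t+\tau)$ is even meaningful: this is guaranteed by the definition of an almost periodic time scale, which asserts precisely that $t+\tau\in\mathbb{T}$ whenever $t\in\mathbb{T}$ and $\tau\in\Pi$, so $\mu$ is defined at the point $t+\tau$.

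Once this is in place, I would simply compute
\[
\mu(t+\tau)=\sigma(t+\tau)-(t+\tau).
\]
The single substantive step is to invoke Lemma \ref{lem23}, which gives $\sigma(t+\tau)=\sigma(t)+\tau$; plugging this in makes the $\tau$ terms cancel, leaving $\sigma(t)-t=\mu(t)$. Since $t$ and $\tau$ were arbitrary, this yields the claimed identity for all $t\in\mathbb{T}$ and all $\tau\in\Pi$.

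There is no genuine obstacle here: all the real work has already been done in establishing the translation invariance of the forward jump operator $\sigma$ in Lemma \ref{lem23}, and the present corollary is a purely formal transfer of that invariance to $\mu$ through its defining relation $\mu=\sigma-\operatorname{id}$. The only point deserving a word of care is the well-definedness just mentioned, which is why the hypothesis that $\mathbb{T}$ be an almost periodic time scale (so that $\Pi$ genuinely consists of admissible translation numbers) cannot be dropped.
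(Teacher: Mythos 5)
Your proposal is correct and is exactly the argument the paper intends: the corollary is stated as an immediate consequence of Lemma \ref{lem23}, obtained by writing $\mu(t+\tau)=\sigma(t+\tau)-(t+\tau)=\sigma(t)+\tau-(t+\tau)=\sigma(t)-t=\mu(t)$. Your additional remark on well-definedness of $\mu(t+\tau)$ (i.e., $t+\tau\in\mathbb{T}$ by the definition of $\Pi$) is a sound observation that the paper leaves implicit.
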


\begin{lemma}\label{lem24}
Let $\mathbb{T}$ be an almost periodic time scale. If $f:\mathbb{T}\rightarrow\mathbb{R}$ is right-dense continuous, then
\[
\int_a^{b}f(t+\tau)\Delta t=\int_{a+\tau}^{b+\tau}f(t)\Delta t,
\]
where $a,b\in\mathbb{T},\,\tau\in \Pi$.
\end{lemma}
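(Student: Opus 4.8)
The plan is to reduce the claimed shift invariance to the defining property of the $\Delta$-integral, namely that it is computed from an antiderivative, and to transport antiderivatives under the translation $t\mapsto t+\tau$. Since $\tau\in\Pi$, this translation is a bijection of $\mathbb{T}$ onto itself, and by Lemma \ref{lem23} it intertwines the forward jump operator: $\sigma(t+\tau)=\sigma(t)+\tau$. This single identity is what allows the $\Delta$-derivative to commute with the shift. Because the translation preserves right-dense and right-scattered points (again via Lemma \ref{lem23}), the function $t\mapsto f(t+\tau)$ is itself right-dense continuous, so both integrals in the statement are well defined.

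Concretely, first I would invoke the standard existence theorem for antiderivatives of right-dense continuous functions to fix an $F$ with $F^{\Delta}=f$ on $\mathbb{T}$. I then set $G(t):=F(t+\tau)$ and claim $G^{\Delta}(t)=f(t+\tau)$. To verify this I work directly from the definition of the $\Delta$-derivative: given $\varepsilon>0$, the differentiability of $F$ at the point $t+\tau$ supplies a neighborhood $V$ of $t+\tau$ in $\mathbb{T}$ on which
\[
\bigl|[F(\sigma(t+\tau))-F(s')]-f(t+\tau)[\sigma(t+\tau)-s']\bigr|<\varepsilon\,|\sigma(t+\tau)-s'|.
\]
Substituting $s'=s+\tau$, so that $s$ ranges over the set $U:=V-\tau$, which is a neighborhood of $t$ in $\mathbb{T}$ precisely because $\Pi$-translations are homeomorphisms of $\mathbb{T}$, and using $\sigma(t+\tau)=\sigma(t)+\tau$, the bracketed differences become $G(\sigma(t))-G(s)$ while $\sigma(t+\tau)-s'=\sigma(t)-s$. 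This rewrites the inequality exactly as the defining estimate for $G^{\Delta}(t)=f(t+\tau)$.

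Having shown that $G$ is an antiderivative of $t\mapsto f(t+\tau)$, the conclusion is immediate from the definition of the integral:
\[
\int_a^b f(t+\tau)\,\Delta t = G(b)-G(a) = F(b+\tau)-F(a+\tau)=\int_{a+\tau}^{b+\tau} f(t)\,\Delta t.
\]
I expect the main, and really the only, obstacle to be the $\Delta$-derivative computation in the middle step: one must check that $U=V-\tau$ is genuinely a relative neighborhood of $t$ in $\mathbb{T}$ and that the substitution $s'=s+\tau$ is legitimate, both of which hinge on $\Pi$-translations being structure-preserving bijections of $\mathbb{T}$. The algebraic fact making the jump terms match up is exactly Lemma \ref{lem23}; everything else is bookkeeping.
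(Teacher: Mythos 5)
Your proof is correct and takes essentially the same route as the paper's: fix an antiderivative $F$ of $f$, set $G(t):=F(t+\tau)$, use Lemma \ref{lem23} to get $G^{\Delta}(t)=f(t+\tau)$, and conclude from the antiderivative definition of the $\Delta$-integral. The only difference is cosmetic --- the paper verifies $G^{\Delta}(t)=f(t+\tau)$ by splitting into the right-scattered case (difference quotient with $\mu$) and the right-dense case (limit of difference quotients), whereas you check it uniformly from the $\varepsilon$-neighborhood definition of the $\Delta$-derivative, which handles both cases at once.
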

\begin{proof}
Let $F(t)$ be an antiderivative of $f(t)$ and $G(t):=F(t+\tau)$ for all $t\in\mathbb{T}$. Since $t$ is  right-scattered or right-dense if and only if $t+\tau$ are right-scattered
  or right-dense, respectively, $G(t)$ is a continuous function.

  $\mathbf{Case}$ (1): If $t$ is right-scattered, then $G(t)$ is differentiable
at $t$ and
\[
G^{\Delta}(t)=\frac{G(\sigma(t))-G(t)}{\mu(t)}=\frac{F(\sigma(t)+\tau)-F(t+\tau)}{\mu(t+\tau)}=\frac{F(\sigma(t+\tau))-F(t+\tau)}{\mu(t+\tau)}
=f(t+\tau).
\]

$\mathbf{Case}$ (2):  If $t$ is right-dense, since $F$ is differentiable at $t$, $\lim\limits_{s\rightarrow t}\frac{F(t)- F(s)}{t-s}$ exists and is a finite number.
In this case $f(t)=F^{\Delta}(t)=\lim\limits_{s\rightarrow t}\frac{F(t)-F(s)}{t-s}$, so
\[
f(t+\tau)=\lim_{s^{'}\rightarrow t+\tau}\frac{F(t+\tau)-F(s^{'})}{t+\tau-s^{'}}
=\lim_{s\rightarrow t}\frac{F(t+\tau)-F(s+\tau)}{t+\tau-(s+\tau)}=\lim_{s\rightarrow t}\frac{G(t)-G(s)}{t-s}=G^{\Delta}(t).
\]
Therefore, for both cases we have $G^{\Delta}(t)=f(t+\tau)$ for all $t\in\mathbb{T}$. Consequently,
\[
\int_{a+\tau}^{b+\tau}f(t)\Delta t=F(b+\tau)-F(a+\tau)=G(b)-G(a)=\int_a^{b}f(t+\tau)\Delta t.
\]
The proof is complete.
\end{proof}

\begin{example}
Consider the time scale $\mathbb{T}=\bigcup\limits_{k=-\infty}^{+\infty}[2k,2k+1]$. Let $f$ be a right-dense continuous function. Obviously for this time scale, $2\in\Pi$ and
\[
\int_0^{2}f(t+2)\Delta t=\int_0^{1}f(t+2)dt+\int_1^{\sigma(1)}f(t+2)\Delta t=\int_2^{3}f(t)dt+\mu(1)f(3)=\int_2^{3}f(t)dt+f(3),
\]
\[
\int_2^{4}f(t)\Delta t=\int_2^{3}f(t)dt+\int_3^{\sigma(3)}f(t)\Delta t=\int_2^{3}f(t)dt+\mu(3)f(3)=\int_2^{3}f(t)dt+f(3).
\]
\end{example}
Now, for convenience, we introduce some notations. We will use
$x=(x_1,x_2,\ldots,x_n)^T\in\mathbb{R}^n$ to denote a column
vector, in which the symbol $T$ denotes the transpose of vectors. We
let $|x|$ denote the absolute-value vector given by
$|x|=(|x_1|,|x_2|,\ldots,|x_n|)^T$, and define
$||x||=\max\limits_{1\leq i\leq n}|x_i|$.

Let $BC(\mathbb{T},\mathbb{R}^n)=\{f:\mathbb{T}\rightarrow\mathbb{R}^n|f$ is bounded continuous function on $\mathbb{T}\}$  with the sup-norm defined by $||f||_{\infty}=\sup\limits_{t\in\mathbb{T}}\|f(t)\|$.
 It is easy to check that $(BC(\mathbb{T},\mathbb{R}^n),||\cdot||_{\infty})$ is a Banach space.

\begin{definition} \cite{19,20}
 Let $\mathbb{T}$ be an almost periodic time scale. A function $f\in C(\mathbb{T},\mathbb{R}^n)$ is called almost periodic if for each
 $\varepsilon>0$, there exists $l_{\varepsilon}>0$ such that every interval of length $l_{\varepsilon}$ contains at least a number
 $\tau\in\Pi$ with the following property
 \[
 \|f(t+\tau)-f(t)\|<\varepsilon,\,\,\,\,\,\,\forall t\in\mathbb{T}.
 \]
\end{definition}
The collection of all almost periodic functions which go from $\mathbb{T}$ to $\mathbb{R}^n$ will
be denoted by $AP(\mathbb{T},\mathbb{R}^n)$. $AP(\mathbb{T},\mathbb{R}^n)$ equipped with the sup-norm is a
Banach space.

\begin{definition} \cite{23}\label{def25}
A function $f\in C(\mathbb{T},\mathbb{R}^n)$ is called pseudo-almost periodic if $f=g+h$, where $g\in AP(\mathbb{T},\mathbb{R}^n)$ and
$h\in PAP_0^*(\mathbb{T},\mathbb{R}^n):=\big\{\varphi\in BC(\mathbb{T},\mathbb{R}^n):\varphi$ is $\Delta$-measurable such that
$\lim\limits_{r\rightarrow +\infty}\frac{1}{2r}\int_{\overline{t}-r}^{\overline{t}+r}|\varphi(s)|\Delta s=0$\big\},
where $\overline{t}\in\mathbb{T},\,r\in\Pi$.
\end{definition}

\begin{definition}
The weighted pseudo-almost periodic solution
$x^*(t)=(x^*_1(t),x^*_2(t),\ldots,x^*_n(t))^T$
of system \eqref{e1} with the initial value
$\varphi^*(t)=(\varphi^*_1(t),\varphi^*_2(t),\ldots,\varphi^*_n(t))^T$ is said to be globally exponentially stable.
If there exist  a positive constant $\lambda$ with
$\ominus\lambda\in \mathcal{R}^+$ and $M>1$ such that every solution
$x(t)=(x_1(t),x_2(t),\ldots,x_n(t))^T$ of
system \eqref{e1} with the initial value
$\varphi(t)=(\varphi_1(t),\varphi_2(t),\ldots,\varphi_n(t))^T$ satisfies
\[
\|x(t)-x^*(t)\|\leq M
e_{\ominus\lambda}(t,t_0)\|\psi\|_{\infty},\,\,\,\,\forall
t\in(0,+\infty)_{\mathbb{T}},
\]
where
\[
\|\psi\|_{\infty}=\sup_{t\in[-\gamma,0]_{\mathbb{T}}}\max_{1\leq
i\leq
n}|\varphi_i(t)-\varphi_i^*(t)|,\,\,\,t_0=\max\{[-v,0]_{\mathbb{T}}\}.
\]
\end{definition}

\section{Weighted pseudo-almost periodic functions on time scales}\setcounter{equation}{0}
\vspace{1ex}
\indent

Let $\mathbb{U}$ denote the collection of functions (weights) $u:\mathbb{T}\rightarrow(0,+\infty)$, which are locally
integrable over $\mathbb{T}$ such that $u>0$ almost everywhere. Let $u\in\mathbb{U}$,   for $r\in\Pi$ with $r>0$,
we  denote
\[
u(Q_r):=\int_{Q_r}u(x)\Delta x,
\]
where $Q_r:=[\bar{t}-r,\bar{t}+r]_{\mathbb{T}}\,\,(\bar{t}=\min\{[0,\infty)_\mathbb{T}\})$.
If $u(x)=1$ for each $x\in\mathbb{T}$, then $\lim\limits_{r\rightarrow\infty}u(Q_r)=\infty$. Consequently, we define the space of weights $\mathbb{U}_{\infty}$ by $\mathbb{U}_{\infty}:=\big\{u\in\mathbb{U}:\inf\limits_{t\in\mathbb{T}}u(t)=u_0>0,\,\lim\limits_{r\rightarrow\infty}
u(Q_r)=\infty\big\}$. In addition  to the above, we define the set of weights $\mathbb{U}_{B}$ by $\mathbb{U}_{B}:=\big\{u\in\mathbb{U}_{\infty}:\sup\limits_{t\in\mathbb{T}}u(t)<\infty\big\}$.
\begin{definition}\label{def31}
Fix $u\in\mathbb{U}_{\infty}$. A continuous function $f:\mathbb{T}\rightarrow\mathbb{R}^n$ is called weighted pseudo-almost periodic if it can be
written as $f=h+\varphi$ with $h\in AP(\mathbb{T},\mathbb{R}^n)$ and $\varphi\in PAP_0(\mathbb{T},\mathbb{R}^n,u)$, where the space
$PAP_0(\mathbb{T},\mathbb{R}^n,u)$ is defined by
\[
PAP_0(\mathbb{T},\mathbb{R}^n,u)=\bigg\{\varphi\in BC(\mathbb{T},\mathbb{R}^n):\lim_{r\rightarrow +\infty}
\frac{1}{u(Q_r)}\int_{Q_r}\|g(t)\|u(t)\Delta t=0\bigg\}.
\]
$PAP_0(\mathbb{T},\mathbb{R}^n,u)$ is called translation invariant if $\varphi \in  PAP_0(\mathbb{T},\mathbb{R}^n,u)$, then $\varphi_\tau\in PAP_0(\mathbb{T},\mathbb{R}^n,u)$, where $\tau\in \Pi$ and $\varphi_\tau (t)= \varphi(t-\tau)$.
\end{definition}
All weighted pseudo-almost periodic functions which go from $\mathbb{T}$ to $\mathbb{R}^n$, will be denoted by $PAP(\mathbb{T},\mathbb{R}^n,u)$.
By Definition 2.5, it is easy to see that the decomposition of a pseudo-almost periodic function on time scales is unique. This is not always the case for weighted pseudo-almost
periodic functions (see Example \ref{ex31}). In fact, the uniqueness of such a decomposition depends upon the translation-invariance of the space
$PAP_0(\mathbb{T},\mathbb{R}^n,u)$.
\begin{example}\label{ex31}
Consider the almost periodic time scale $\mathbb{T}=\bigcup\limits_{k=-\infty}^{+\infty}[2k,2k+1]$. Let $u(t)=e^{|t|}$.
Obviously $\inf\limits_{t\in\mathbb{T}}u(t)=1>0,\,\Pi=2\mathbb{Z}$, and
{\setlength\arraycolsep{2pt}
\begin{eqnarray*}
 \lim_{r\rightarrow\infty}u(Q_r)
&=&\lim_{k\rightarrow\infty}\int_{-2k}^{2k}e^{|t|}\Delta t\\
&=&\lim_{k\rightarrow\infty}\bigg[\int_{-2k}^{-2k+1}e^{-t}dt+\int_{-2k+1}^{\sigma(-2k+1)}e^{-t}\Delta t
+\int_{-2k+2}^{-2k+3}e^{-t}dt\\
&&+\int_{-2k+3}^{\sigma(-2k+3)}e^{-t}\Delta t+\ldots +\int_{-2}^{-1}e^{-t}dt+\int_{-1}^{\sigma(-1)}e^{-t}\Delta t
+\int_0^{1}e^{t}dt\\
&&+\int_1^{\sigma(1)}e^{t}\Delta t+\int_2^{3}e^{t}dt+\int_3^{\sigma(3)}e^{t}\Delta t+\ldots+\int_{2k-2}^{2k-1}e^{t}dt+\int_{2k-1}^{\sigma(2k-1)}e^{t}\Delta t\bigg]\\
&=&\lim_{k\rightarrow\infty}[e^{2k}-1+2(e^{1}+e^{3}+\ldots+e^{2k-1})]=\infty,
\end{eqnarray*}}
which implies that $u\in\mathbb{U}_{\infty}$. It is easy to see that $f(t)=(\sin(2\pi t),\sin(4\pi t))^T\in AP(\mathbb{T},\mathbb{R}^2)$. For each $i\in\mathbb{N}$, we have
\begin{eqnarray*}
\int_{-2i}^{-2i+1}|\sin(2\pi t)|e^{|t|}\Delta t&=&\int_{-2i}^{-2i+1}|\sin(2\pi t)|e^{-t}dt=0,
\end{eqnarray*}
\[
\int_{-2i+1}^{\sigma(-2i+1)}|\sin(2\pi t)|e^{|t|}\Delta t=\mu(-2i+1)|\sin(2\pi(-2i+1))|e^{|-2i+1|}=0,
\]
\begin{eqnarray*}
\int_{2i-2}^{2i-1}|\sin(2\pi t)|e^{|t|}\Delta t&=&\int_{2i-2}^{2i-1}|\sin(2\pi t)|e^{t}dt=0,
\end{eqnarray*}
\[
\int_{2i-1}^{\sigma(2i-1)}|\sin(2\pi t)|e^{|t|}\Delta t=\mu(2i-1)|\sin(2\pi-2i+1))|e^{|2i-1|}=0,
\]
so $\frac{1}{u(Q_r)}\int_{Q_r}|\sin(2\pi t)|e^{|t|}\Delta t=0$. Similarly, $\frac{1}{u(Q_r)}\int_{Q_r}|\sin(4\pi t)|e^{|t|}\Delta t=0$,
that is, $f(t)\in AP(\mathbb{T},\mathbb{R}^2)\cap PAP_0(\mathbb{T},\mathbb{R}^2,u)$.
\end{example}
Similar to the proof of Theorem 2.1 in \cite{24}, we have
\begin{theorem}\label{thm31}
Suppose that $u\in\mathbb{U}_{\infty}$  and for any $\tau\in\Pi$, $\overline{\lim\limits_{|t|\rightarrow +\infty}}\frac{u(t+\tau)}{u(t)}$ is finite, then
$\overline{\lim\limits_{|t|\rightarrow +\infty}}\frac{u(Q_{t+\tau})}{u(Q_t)}$ is finite and $PAP_0(\mathbb{T},\mathbb{R}^n,u)$ is translation invariant.
\end{theorem}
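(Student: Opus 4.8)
The plan is to prove the two conclusions in order: first the finiteness of $\limsup_{|t|\to+\infty}\frac{u(Q_{t+\tau})}{u(Q_t)}$, and then to feed that estimate, together with the change-of-variables formula of Lemma \ref{lem24}, into the definition of $PAP_0(\mathbb{T},\mathbb{R}^n,u)$ to obtain translation invariance. Since the radius variable in $Q_r$ is positive, $\limsup$ over radii means $\limsup_{r\to+\infty}$. Fix $\tau\in\Pi$. Because $\Pi$ is symmetric, both $L_\tau:=\limsup_{|t|\to+\infty}\frac{u(t+\tau)}{u(t)}$ and $L_{-\tau}:=\limsup_{|t|\to+\infty}\frac{u(t-\tau)}{u(t)}$ are finite by hypothesis, so given $\varepsilon>0$ there is $T>0$ with $u(t+\tau)\le(L_\tau+\varepsilon)u(t)$ and $u(t-\tau)\le(L_{-\tau}+\varepsilon)u(t)$ whenever $|t|>T$.

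For the first claim I need only treat $\tau>0$, since for $\tau\le 0$ one has $Q_{r+\tau}\subset Q_r$ and hence $u(Q_{r+\tau})\le u(Q_r)$. For $\tau>0$ I would split $Q_{r+\tau}=[\bar t-r-\tau,\bar t-r]_{\mathbb{T}}\cup Q_r\cup[\bar t+r,\bar t+r+\tau]_{\mathbb{T}}$, giving $u(Q_{r+\tau})=u(Q_r)+\int_{\bar t+r}^{\bar t+r+\tau}u(x)\Delta x+\int_{\bar t-r-\tau}^{\bar t-r}u(x)\Delta x$. The two boundary windows sit at $|x|\approx r$, so for $r$ large the pointwise bounds apply there. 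Applying Lemma \ref{lem24} to shift each window back into $Q_r$ I expect
\[
\int_{\bar t+r}^{\bar t+r+\tau}u(x)\Delta x=\int_{\bar t+r-\tau}^{\bar t+r}u(x+\tau)\Delta x\le(L_\tau+\varepsilon)\,u(Q_r),
\]
\[
\int_{\bar t-r-\tau}^{\bar t-r}u(x)\Delta x=\int_{\bar t-r}^{\bar t-r+\tau}u(x-\tau)\Delta x\le(L_{-\tau}+\varepsilon)\,u(Q_r),
\]
using that $[\bar t+r-\tau,\bar t+r]_{\mathbb{T}}$ and $[\bar t-r,\bar t-r+\tau]_{\mathbb{T}}$ lie inside $Q_r$ once $2r\ge\tau$. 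Summing yields $u(Q_{r+\tau})\le(1+L_\tau+L_{-\tau}+2\varepsilon)u(Q_r)$, so letting $r\to\infty$ and then $\varepsilon\to0$ gives $\limsup_{r\to\infty}\frac{u(Q_{r+\tau})}{u(Q_r)}\le 1+L_\tau+L_{-\tau}<\infty$.

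For translation invariance, let $\varphi\in PAP_0(\mathbb{T},\mathbb{R}^n,u)$ and $\tau\in\Pi$; that $\varphi_\tau(t)=\varphi(t-\tau)$ lies in $BC(\mathbb{T},\mathbb{R}^n)$ is immediate since $t-\tau\in\mathbb{T}$. Changing variables via Lemma \ref{lem24} I get $\int_{Q_r}\|\varphi(t-\tau)\|u(t)\Delta t=\int_{\bar t-r-\tau}^{\bar t+r-\tau}\|\varphi(s)\|u(s+\tau)\Delta s$. Splitting the range into $\{|s|\le T\}$, on which $\|\varphi(s)\|u(s+\tau)$ is bounded and contributes a fixed finite constant $C$, and $\{|s|>T\}$, where $u(s+\tau)\le(L_\tau+\varepsilon)u(s)$, and noting $[\bar t-r-\tau,\bar t+r-\tau]_{\mathbb{T}}\subset Q_{r+|\tau|}$, I would reach
\[
\frac{1}{u(Q_r)}\int_{Q_r}\|\varphi(t-\tau)\|u(t)\Delta t\le\frac{C}{u(Q_r)}+(L_\tau+\varepsilon)\frac{u(Q_{r+|\tau|})}{u(Q_r)}\cdot\frac{1}{u(Q_{r+|\tau|})}\int_{Q_{r+|\tau|}}\|\varphi(s)\|u(s)\Delta s.
\]
As $r\to\infty$ the first term vanishes since $u(Q_r)\to\infty$; in the second, $\frac{u(Q_{r+|\tau|})}{u(Q_r)}$ stays bounded by the first part, while the averaged integral tends to $0$ because $\varphi\in PAP_0$ and $r+|\tau|\to\infty$. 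Hence $\varphi_\tau\in PAP_0(\mathbb{T},\mathbb{R}^n,u)$.

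I expect the main obstacle to be the first claim, specifically the control of the boundary windows of $u(Q_{r+\tau})$. Because weights in $\mathbb{U}_\infty$ need not be bounded above (e.g.\ $u(t)=e^{|t|}$ as in Example \ref{ex31}), a window of fixed width $\tau$ can carry integral mass comparable to all of $u(Q_r)$, so bounding it by $\sup u$ fails; the pointwise ratio hypothesis must be used instead, and for the left window this forces the $-\tau$ version of the hypothesis, which is precisely why the symmetry of $\Pi$ is needed. A secondary technical point is that Lemma \ref{lem24} is stated for right-dense continuous integrands, whereas $u$ is only locally $\Delta$-integrable; the change of variables nevertheless remains valid because the Lebesgue $\Delta$-measure is invariant under shifts in $\Pi$, which follows from $\mu(t+\tau)=\mu(t)$ for $\tau\in\Pi$ (the corollary to Lemma \ref{lem23}), and this is the fact I would cite to legitimize the substitutions above.
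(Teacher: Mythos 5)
Your proof is correct, but it cannot be compared line-by-line with the paper's, because the paper does not actually prove Theorem \ref{thm31}: it only asserts the result with the phrase ``Similar to the proof of Theorem 2.1 in \cite{24}'', deferring entirely to Ji and Zhang's real-line argument. Your argument is precisely the time-scale adaptation of that deferred proof --- the same three-piece decomposition of $u(Q_{r+\tau})$ into $Q_r$ plus two boundary windows controlled by the pointwise ratio hypothesis, and the same splitting of the shifted integral into a compact part and a tail to get translation invariance --- so in spirit it is the proof the paper intends. What your write-up adds, and what makes the paper's ``similar'' legitimate rather than hand-waving, is the time-scale-specific bookkeeping: the symmetry of $\Pi$ (automatic from its definition, and genuinely needed for the left window, where the hypothesis must be invoked at $-\tau$); the observation that $r+|\tau|\in\Pi$, so $Q_{r+|\tau|}$ is an admissible averaging window in the definition of $PAP_0(\mathbb{T},\mathbb{R}^n,u)$; and the justification of the change of variables for a merely locally $\Delta$-integrable weight via shift-invariance of the measure $\mu_{\Delta}$ (from Lemma \ref{lem23} and its corollary), since Lemma \ref{lem24} as stated applies only to rd-continuous integrands. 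One small wording correction: on $\{|s|\le T\}$ the integrand $\|\varphi(s)\|u(s+\tau)$ need not be \emph{bounded}, as weights in $\mathbb{U}_{\infty}$ can be locally unbounded; what you actually use --- and what is true --- is that its integral over that fixed compact set is finite, by local integrability of $u$, which is all the argument requires.
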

Based on Theorem \ref{thm31}, we introduce the following new set of weights, which makes the spaces of weighted pseudo-almost periodic functions translation invariant:
$$\mathbb{U}_{\infty}^{Inv}:=\bigg\{u\in\mathbb{U}_{\infty}: \,\,\mathrm{for}\,\, \mathrm{all}\,\, s\in\Pi,
 \overline{\lim\limits_{|t|\rightarrow\infty}}\frac{u(t+s)}{u(t)}<\infty\bigg\}.$$
By Definition \ref{def31}  and the definition of $\mathbb{U}_{\infty}^{Inv}$, one can easily show that
\begin{lemma}\label{lem31}
 Let $u\in\mathbb{U}_{\infty}^{Inv}$. If $f,g\in PAP(\mathbb{T},\mathbb{R}^n,u)$, then $f+g,fg\in PAP(\mathbb{T},\mathbb{R}^n,u)$; if $f\in PAP(\mathbb{T},\mathbb{R}^n,u),\,
g\in AP(\mathbb{T},\mathbb{R}^n)$, then $fg\in PAP(\mathbb{T},\mathbb{R}^n,u)$.
\end{lemma}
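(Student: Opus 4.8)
The plan is to argue directly from Definition \ref{def31}. Fix decompositions $f=h_1+\varphi_1$ and $g=h_2+\varphi_2$ with $h_1,h_2\in AP(\mathbb{T},\mathbb{R}^n)$ and $\varphi_1,\varphi_2\in PAP_0(\mathbb{T},\mathbb{R}^n,u)$. Closure under addition is immediate: $f+g=(h_1+h_2)+(\varphi_1+\varphi_2)$, the first summand is almost periodic because $AP(\mathbb{T},\mathbb{R}^n)$ is a linear space, and for the second the triangle inequality $\|\varphi_1+\varphi_2\|\le\|\varphi_1\|+\|\varphi_2\|$ together with linearity of the $\Delta$-integral shows
\[
\frac{1}{u(Q_r)}\int_{Q_r}\|\varphi_1(t)+\varphi_2(t)\|u(t)\,\Delta t\longrightarrow 0\quad(r\to\infty),
\]
so $\varphi_1+\varphi_2\in PAP_0(\mathbb{T},\mathbb{R}^n,u)$ and hence $f+g\in PAP(\mathbb{T},\mathbb{R}^n,u)$.

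For the product (taken componentwise) I would expand
\[
fg = h_1 h_2 + h_1\varphi_2 + \varphi_1 h_2 + \varphi_1\varphi_2,
\]
designate $h_1 h_2$ as the almost periodic part, and show the remaining three terms belong to $PAP_0(\mathbb{T},\mathbb{R}^n,u)$. Since $h_1,h_2$ are almost periodic they are bounded, say $\|h_i\|_\infty\le M_i$, and $\varphi_1,\varphi_2$ are bounded as elements of $BC(\mathbb{T},\mathbb{R}^n)$. Using the submultiplicativity $\|xy\|\le\|x\|\,\|y\|$ of the componentwise product in the max-norm, each cross term is pointwise dominated by a constant multiple of $\|\varphi_2(t)\|$ or $\|\varphi_1(t)\|$ (for instance $\|h_1(t)\varphi_2(t)\|\le M_1\|\varphi_2(t)\|$ and $\|\varphi_1(t)\varphi_2(t)\|\le\|\varphi_1\|_\infty\|\varphi_2(t)\|$); dividing by $u(Q_r)$ and integrating against $u$ over $Q_r$ then reduces each to a constant times a defining limit of $\varphi_1$ or $\varphi_2$, hence to $0$. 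Thus $fg\in PAP(\mathbb{T},\mathbb{R}^n,u)$. The mixed statement is the special case $\varphi_2\equiv0$: then $fg=h_1 h_2+\varphi_1 h_2$ with $h_1 h_2\in AP(\mathbb{T},\mathbb{R}^n)$ and $\varphi_1 h_2\in PAP_0(\mathbb{T},\mathbb{R}^n,u)$.

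The only step requiring genuine care is the claim that $h_1 h_2\in AP(\mathbb{T},\mathbb{R}^n)$; the $PAP_0$ estimates above are routine once boundedness is used. I would establish the product closure of $AP(\mathbb{T},\mathbb{R}^n)$ from the definition of almost periodicity by working with common $\varepsilon$-almost periods: choosing $\tau\in\Pi$ that is simultaneously an $\varepsilon$-almost period of $h_1$ and of $h_2$ (possible because each set of almost periods is relatively dense and the intersection of two such relatively dense sets is again relatively dense), one bounds
\[
\|h_1(t+\tau)h_2(t+\tau)-h_1(t)h_2(t)\|\le M_1\|h_2(t+\tau)-h_2(t)\|+M_2\|h_1(t+\tau)-h_1(t)\|\le (M_1+M_2)\varepsilon,
\]
which gives $h_1 h_2\in AP(\mathbb{T},\mathbb{R}^n)$. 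I should emphasize that the hypothesis $u\in\mathbb{U}_\infty^{Inv}$ is not itself used in these closure arguments; rather, via Theorem \ref{thm31} it guarantees that $PAP_0(\mathbb{T},\mathbb{R}^n,u)$ is translation invariant and hence that the decomposition is unique, so that the phrases \emph{almost periodic part} and \emph{$PAP_0$ part} invoked above are unambiguous.
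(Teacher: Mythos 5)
The paper itself offers no proof of this lemma; it simply asserts that "by Definition \ref{def31} and the definition of $\mathbb{U}_{\infty}^{Inv}$, one can easily show" it, so your argument has to stand on its own merits. Most of it does: the decomposition strategy is the natural (and surely intended) one, the sum case is fine, the $PAP_0$ estimates for the three cross terms $h_1\varphi_2$, $\varphi_1 h_2$, $\varphi_1\varphi_2$ are correct, and you are right that only the \emph{existence} of a decomposition is needed for membership in $PAP(\mathbb{T},\mathbb{R}^n,u)$, so uniqueness (Theorem \ref{thm32}) plays no real role. The problem sits exactly at the step you yourself flagged as the one "requiring genuine care," namely that $h_1h_2\in AP(\mathbb{T},\mathbb{R}^n)$.

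Your justification for the existence of a common $\varepsilon$-almost period $\tau\in\Pi$ invokes the principle that the intersection of two relatively dense sets is again relatively dense. That principle is false: $2\mathbb{Z}$ and $2\mathbb{Z}+1$ are each relatively dense in $\mathbb{R}$, yet their intersection is empty. So nothing in your argument produces even one common $\varepsilon$-almost period, let alone a relatively dense set of them. The statement that two almost periodic functions possess a relatively dense set of \emph{common} $\varepsilon$-almost periods is a genuine theorem (classically due to Bohr), and its standard proofs go by a different route: either one shows that the $\mathbb{R}^{2n}$-valued function $(h_1,h_2)$ is itself almost periodic and uses its almost periods, or one uses a Bochner-type sequential characterization — from any sequence in $\Pi$ extract a subsequence along which the translates of $h_1$, and then of $h_2$, converge uniformly, whence the translates of $h_1h_2$ converge uniformly as well. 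In the time-scale setting these facts, in particular that $AP(\mathbb{T},\mathbb{R}^n)$ is closed under products, are part of the theory developed in \cite{19,20}, and the cleanest repair is simply to cite that (note that your sum case already leans on the same body of results when you assert that $AP(\mathbb{T},\mathbb{R}^n)$ is a linear space: closure of $AP$ under addition conceals exactly the same common-almost-period issue). With that one step repaired by citation or by a Bochner-type argument, the rest of your proof is sound.
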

\begin{theorem}\label{thm32}
Let $u\in\mathbb{U}_{\infty}^{Inv}$. If $f\in PAP(\mathbb{T},\mathbb{R}^n,u)$, then there exist a unique $g\in AP(\mathbb{T},\mathbb{R}^n)$
and a unique $h\in PAP_0(\mathbb{T},\mathbb{R}^n,u)$ such that $f=g+h$.
\end{theorem}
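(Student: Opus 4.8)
The plan is to treat existence and uniqueness separately, with all the real work residing in uniqueness. Existence is immediate from Definition \ref{def31}: saying $f\in PAP(\mathbb{T},\mathbb{R}^n,u)$ already means $f=g+h$ with $g\in AP(\mathbb{T},\mathbb{R}^n)$ and $h\in PAP_0(\mathbb{T},\mathbb{R}^n,u)$. So suppose $f=g_1+h_1=g_2+h_2$ are two such decompositions and set $\omega:=g_1-g_2=h_2-h_1$. Since $AP(\mathbb{T},\mathbb{R}^n)$ and $PAP_0(\mathbb{T},\mathbb{R}^n,u)$ are both linear spaces, $\omega$ lies in their intersection. Hence the whole theorem reduces to the single claim
\[
AP(\mathbb{T},\mathbb{R}^n)\cap PAP_0(\mathbb{T},\mathbb{R}^n,u)=\{0\}.
\]
It is exactly here that the hypothesis $u\in\mathbb{U}_{\infty}^{Inv}$ (equivalently, the translation invariance of $PAP_0(\mathbb{T},\mathbb{R}^n,u)$ supplied by Theorem \ref{thm31}) must enter: for weights outside this subclass the intersection can be nontrivial and the decomposition need not be unique.

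To establish the claim I would argue by contradiction. Assume $\omega\in AP\cap PAP_0(u)$ with $\omega\not\equiv 0$. Then there is a point $t_0\in\mathbb{T}$ and a constant $c>0$ with $\|\omega(t_0)\|\geq 3c$, and by continuity of $\omega$ there is a neighbourhood $I_0=[t_0-\eta,t_0+\eta]_{\mathbb{T}}$ of positive $\Delta$-measure on which $\|\omega(t)\|\geq 2c$ (at a right-scattered $t_0$ the single point already carries $\Delta$-measure $\mu(t_0)>0$). Since $\omega$ is almost periodic, Definition 2.6 with $\varepsilon=c$ yields $l=l_c>0$ such that every interval of length $l$ contains a translation number $\tau\in\Pi$ with $\|\omega(t+\tau)-\omega(t)\|<c$ for all $t$. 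For such $\tau$ and $t\in I_0$ we get $\|\omega(t+\tau)\|\geq\|\omega(t)\|-c\geq c$, so on each translate $I_0+\tau\subset\mathbb{T}$ we have $\|\omega\|\geq c$. Because the translation numbers are relatively dense, the set $E:=\{s\in\mathbb{T}:\|\omega(s)\|\geq c\}$ contains a translate of $I_0$ inside every window of length of order $l$.

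The final step is to turn this geometric density of $E$ into a positive lower bound for the weighted average. Writing
\[
\frac{1}{u(Q_r)}\int_{Q_r}\|\omega(t)\|u(t)\Delta t\;\geq\;\frac{c}{u(Q_r)}\,u(Q_r\cap E),
\]
it suffices to prove $\liminf_{r\to\infty}u(Q_r\cap E)/u(Q_r)=:\delta>0$. Here I would invoke the structure of $\mathbb{U}_{\infty}^{Inv}$: Lemma \ref{lem24} gives $\int_{I_0+\tau}u(t)\Delta t=\int_{I_0}u(s+\tau)\Delta s$, and the finiteness of $\overline{\lim}_{|t|\to\infty}u(t+\tau)/u(t)$ for every $\tau\in\Pi$ (applied to both $\pm\tau$, using that $\Pi$ is symmetric), together with $\inf_{\mathbb{T}}u=u_0>0$, controls the weight carried by each good interval $I_0+\tau$ relative to the window containing it; summing these comparisons over the windows filling $Q_r$ and using Theorem \ref{thm31} (finiteness of $\overline{\lim}\,u(Q_{t+\tau})/u(Q_t)$) yields the uniform weighted lower density $\delta$. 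Then the displayed average stays $\geq c\delta>0$ for all large $r$, contradicting the defining limit of $PAP_0(\mathbb{T},\mathbb{R}^n,u)$. Therefore $\omega\equiv 0$, whence $g_1=g_2$ and $h_1=h_2$.

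I expect the main obstacle to be precisely this weighted lower-density estimate, that is, converting the ordinary (Lebesgue $\Delta$-measure) density of the recurrence set $E$ into a positive \emph{weighted} density $u(Q_r\cap E)/u(Q_r)$. The difficulty is that $u$ may be unbounded, since only $u\in\mathbb{U}_{\infty}$ rather than $u\in\mathbb{U}_{B}$ is assumed, so a priori the weight could concentrate its mass away from the intervals on which $\|\omega\|$ is large; the entire purpose of the $\mathbb{U}_{\infty}^{Inv}$ hypothesis, channelled through Theorem \ref{thm31} and Lemma \ref{lem24}, is to forbid such pathological concentration, and the delicate point is keeping the comparison constants uniform in the shift $\tau$ as the windows march off to infinity.
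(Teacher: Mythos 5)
Your reduction of the theorem to the single claim $AP(\mathbb{T},\mathbb{R}^n)\cap PAP_0(\mathbb{T},\mathbb{R}^n,u)=\{0\}$, and the contradiction setup (a point $t_0$ where $\omega$ is bounded away from $0$, spread to nearby points and propagated by translation numbers), agree with the paper. But the crux of your argument is different from the paper's, and it contains a genuine gap: everything hinges on the weighted lower-density estimate $\liminf_{r\to\infty}u(Q_r\cap E)/u(Q_r)>0$, and this is asserted, not proved --- indeed it does not follow from the stated properties of $\mathbb{U}_{\infty}^{Inv}$. The definition of $\mathbb{U}_{\infty}^{Inv}$ gives, for each \emph{fixed} $s\in\Pi$, a constant $C_s$ and a threshold $T_s$ with $u(t+s)\leq C_s\,u(t)$ for $|t|\geq T_s$; but the comparison you need, between the mass $u(I_0+\tau_j)$ of a good interval and the mass $u(W_j)$ of the window containing it, requires a bound that is \emph{uniform} over all shifts $s\in\Pi$ of size at most the window length, applied at locations $t$ going off to infinity. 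When $\Pi$ is discrete (e.g.\ $\mathbb{T}=\mathbb{Z}$, or $\mathbb{T}=\bigcup_k[2k,2k+1]$ with $\Pi=2\mathbb{Z}$) only finitely many such shifts exist and one can take the maximum of the $C_s$; but when $\Pi$ is a continuum (e.g.\ $\mathbb{T}=\mathbb{R}$, so $\Pi=\mathbb{R}$) infinitely many shifts are involved and no uniform constant is supplied by the hypothesis. Theorem \ref{thm31} does not repair this, since the finiteness of $\overline{\lim}_{|t|\to\infty}u(Q_{t+\tau})/u(Q_t)$ is again pointwise in $\tau$. You flag exactly this as ``the delicate point,'' but flagging it is not closing it: as written, the decisive inequality is an unproven claim, and it is not clear it can be proved from $u\in\mathbb{U}_{\infty}^{Inv}$ alone.

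The paper's proof avoids any density estimate for the set $E$. It covers the window $[\overline{t}-r,\overline{t}+r]_{\mathbb{T}}$ by finitely many translates $s_k+B_{\delta}$ of the set $B_\delta$ of $\delta$-translation numbers and forms the single function
\begin{equation*}
F(t)=\|f(t+t_0)\|+\sum_{k=1}^{m}\|f(t+t_0-s_k)\|,
\end{equation*}
which satisfies $F(t)\geq\delta$ \emph{pointwise} on the window; hence $\frac{1}{u(Q_r)}\int_{Q_r}F(t)u(t)\Delta t\geq\delta$, whereas each of the finitely many summands has weighted mean tending to $0$, because $f\in PAP_0(\mathbb{T},\mathbb{R}^n,u)$ and $PAP_0$ is translation invariant (Theorem \ref{thm31}). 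In other words, the weight is never compared across different locations at all: translation invariance is applied directly to translates of $f$ itself, which is precisely the information Theorem \ref{thm31} provides, and the lower bound $\delta$ holds everywhere rather than only on a sparse set. This is the device your proposal is missing. (For fairness: the paper's own write-up is loose in that the covering, hence $m$ and the $s_k$, formally depend on $r$; the standard repair, as in Ji and Zhang \cite{24}, uses relative density together with the uniform continuity of an almost periodic function to produce one \emph{fixed} finite family of shifts valid for all $t$, after which the termwise limit argument is sound.) To rescue your route you would have to either restrict to settings where the uniform comparison is available (bounded weights $u\in\mathbb{U}_B$, or discrete $\Pi$), or abandon the density estimate in favor of the finite-sum-of-translates trick.
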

\begin{proof}
Suppose that $f\not\equiv0$ and $f\in AP(\mathbb{T},\mathbb{R}^n)\cap PAP_0(\mathbb{T},\mathbb{R}^n,u)$. Then there exists a $t_0\in\mathbb{T}$ such
that $f(t_0)\neq 0$. One can assume that there exists $\delta>0$ such that $\|f(t_0)\|\geq 2\delta$. Define
\[
B_{\delta}:=\{\tau\in\Pi:\|f(t_0+\tau)-f(t_0)\|\leq\delta\}
\]
for every $ r>0$ and $r\in\Pi$. For any $t\in[\overline{t}-r,\overline{t}+r]_{\mathbb{T}}\, \,(\overline{t}=\min\{[0,+\infty)_{\mathbb{T}}\})$,
since $f\in AP(\mathbb{T},\mathbb{R}^n)$,  there exists $l_{\delta}>0$ and
 $\tau\in[t-l_{\delta},t]_{\mathbb{T}}\cap B_{\delta}$, we have
 \[
 t=\tau+(t-\tau)\in (t-\tau)+B_{\delta}\subset\bigcup_{s\in\mathbb{T}}(s+B_{\delta}).
 \]
 On the other hand, $[\overline{t}-r,\overline{t}+r]_{\mathbb{T}}$ is a bounded closed subset of $\mathbb{R}$, so
 $[\overline{t}-r,\overline{t}+r]_{\mathbb{T}}$ is a compact subset of $\mathbb{R}$, then there exist $s_1,s_2,\ldots,s_m\in\mathbb{T}$ such that
 \[
 [\overline{t}-r,\overline{t}+r]_{\mathbb{T}}\subset\bigcup_{k=1}^{m}(s_k+B_{\delta}).
 \]
 Noticing that
 \[
 \|f(t_{0}+t)\|\geq\|f(t_0)\|-\|f(t_{0}+t)-f(t_0)\|\geq\delta
 \]
 for all $t\in B_{\delta}$. For every $ t\in[\overline{t}-r,\overline{t}+r]_{\mathbb{T}}$, there exists an $i\in\{1,2,\ldots,m\}$
 such that $t-s_i\in B_{\delta}$, hence $\|f(t-s_{i}+t_0)\|\geq\delta$. Set
 \[
 F(t)=\|f(t+t_0)\|+\|f(t+t_{0}-s_1)\|+\|f(t+t_{0}-s_2)\|+\ldots+\|f(t+t_{0}-s_m)\|.
 \]
 One can easily see that $F(t)\geq\delta$ for all $t\in[\overline{t}-r,\overline{t}+r]_{\mathbb{T}}$. So
 \begin{equation} \label{c1}
 \frac{1}{u(Q_r)}\int_{Q_r}F(t)u(t)\Delta t\geq\frac{\delta}{u(Q_r)}\int_{Q_r}u(t)\Delta t=\delta.
 \end{equation}
 Using the fact that $PAP_0(\mathbb{T},\mathbb{R}^n,u)$ is translation-invariant and $f\in PAP_0(\mathbb{T},\mathbb{R}^n,u)$, it follows that
 $f(t+t_0),f(t+t_{0}-s_k)\,(k=1,2,\ldots,m)\in PAP_0(\mathbb{T},\mathbb{R}^n,u)$, that is,
 \[
 \lim_{r\rightarrow\infty}\frac{1}{u(Q_r)}\int_{Q_r}\|f(t+t_0)\|u(t)\Delta t=0
 \]
 and
 \[
 \lim_{r\rightarrow\infty}\frac{1}{u(Q_r)}\int_{Q_r}\|f(t+t_{0}-s_k)\|u(t)\Delta t=0,\,\,\,k=1,2,\ldots,m,
 \]
 which contradict  \eqref{c1}, and hence $AP(\mathbb{T},\mathbb{R}^n)\cap PAP_0(\mathbb{T},\mathbb{R}^n,u)=\{0\}$,
 that is, $PAP(\mathbb{T},\mathbb{R}^n,u)=AP(\mathbb{T},\mathbb{R}^n)\bigoplus PAP_0(\mathbb{T},\mathbb{R}^n,u)$. The proof is complete.
\end{proof}

\begin{lemma}\label{lem32}
Let $u\in\mathbb{U}_{\infty}^{Inv}$. If $f=g+h\in PAP(\mathbb{T},\mathbb{R}^n,u)$, where $g\in AP(\mathbb{T},\mathbb{R}^n)$, then $g(\mathbb{T})\subset
\overline{f(\mathbb{T})}$ and $||g||_{\infty}\leq||f||_{\infty}$.
\end{lemma}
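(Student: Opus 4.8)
The plan is to reduce the norm inequality to the range inclusion and then establish $g(\mathbb{T})\subset\overline{f(\mathbb{T})}$ by contradiction, reusing the finite-covering device from the proof of Theorem \ref{thm32}. Note first that, by the uniqueness in Theorem \ref{thm32}, the summand $h:=f-g$ necessarily lies in $PAP_0(\mathbb{T},\mathbb{R}^n,u)$. Moreover the norm estimate is a free consequence of the inclusion: if $g(t)\in\overline{f(\mathbb{T})}$ then $g(t)=\lim_k f(s_k)$ for some $s_k\in\mathbb{T}$, and since $\|f(s_k)\|\le\|f\|_{\infty}$ for every $k$ we get $\|g(t)\|\le\|f\|_{\infty}$; taking the supremum over $t$ gives $\|g\|_{\infty}\le\|f\|_{\infty}$. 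So the real content is the inclusion.

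To prove the inclusion I would suppose, for contradiction, that $g(t_0)\notin\overline{f(\mathbb{T})}$ for some $t_0\in\mathbb{T}$, so that there is $\varepsilon_0>0$ with $\|f(t)-g(t_0)\|\ge\varepsilon_0$ for all $t\in\mathbb{T}$. Since $g\in AP(\mathbb{T},\mathbb{R}^n)$, the set $B:=\{\tau\in\Pi:\|g(t_0+\tau)-g(t_0)\|\le\varepsilon_0/2\}$ is relatively dense, because it contains all $\varepsilon_0/2$-translation numbers of $g$. For each $\tau\in B$, putting $t=t_0+\tau\in\mathbb{T}$ in the contradiction hypothesis and using the triangle inequality gives
\[
\|h(t_0+\tau)\|=\|f(t_0+\tau)-g(t_0+\tau)\|\ge\|f(t_0+\tau)-g(t_0)\|-\|g(t_0+\tau)-g(t_0)\|\ge\frac{\varepsilon_0}{2},
\]
so $\|h\|$ is bounded below by $\varepsilon_0/2$ on the relatively dense set $t_0+B$.

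I would now copy the covering step of Theorem \ref{thm32}. For $r\in\Pi$ the set $Q_r=[\overline{t}-r,\overline{t}+r]_{\mathbb{T}}$ is compact, so relative density of $B$ yields finitely many $s_1,\dots,s_m\in\mathbb{T}$ with $Q_r\subset\bigcup_{k=1}^m(s_k+B)$; hence every $t\in Q_r$ has some index $i$ with $t-s_i\in B$ and therefore $\|h(t_0+t-s_i)\|\ge\varepsilon_0/2$. Writing $F(t)=\sum_{k=1}^m\|h(t_0+t-s_k)\|$, we obtain $F(t)\ge\varepsilon_0/2$ on $Q_r$, so
\[
\frac{1}{u(Q_r)}\int_{Q_r}F(t)u(t)\,\Delta t\ge\frac{\varepsilon_0}{2}.
\]
Since $u\in\mathbb{U}_{\infty}^{Inv}$, the space $PAP_0(\mathbb{T},\mathbb{R}^n,u)$ is translation invariant (Theorem \ref{thm31}), so each translate $t\mapsto h(t_0+t-s_k)$ is again in $PAP_0(\mathbb{T},\mathbb{R}^n,u)$ and has vanishing weighted mean; this forces the left-hand side to $0$, contradicting the bound $\varepsilon_0/2$ and proving $g(t_0)\in\overline{f(\mathbb{T})}$.

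The step I expect to be the main obstacle is the very last one: making the passage from ``each individual translate has zero weighted mean'' to ``the weighted mean of the finite sum $F$ vanishes'' fully rigorous, because the number $m$ of translates and the shifts $s_k$ themselves depend on $r$ and grow as $r\to\infty$. This is precisely where the restriction to $u\in\mathbb{U}_{\infty}^{Inv}$ and the uniform control it provides through Theorem \ref{thm31} must be exploited (one must also check that the relevant shifts genuinely lie in $\Pi$, so that translation invariance applies term by term); without translation invariance of $PAP_0(\mathbb{T},\mathbb{R}^n,u)$ the argument collapses, which is consistent with the fact that for general weights the decomposition of a weighted pseudo-almost periodic function, and hence this comparison between $g$ and $f$, can fail.
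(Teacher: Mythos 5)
Your proposal does not follow the paper's route, and the gap you yourself flag at the end is genuine and, as written, fatal. The paper's own proof of Lemma \ref{lem32} uses no covering at all: assuming $\inf_{s\in\mathbb{T}}\|g(t_0)-f(s)\|>\varepsilon_0$, it invokes the continuity of $g$ at $t_0$ to get a $\delta$-neighborhood on which $\|g(t)-g(t_0)\|<\varepsilon_0/2$, picks a single $\varepsilon_0/4$-translation number $\tau\in\Pi$ of $g$, and concludes by the triangle inequality that $\|h(t+\tau)\|>\varepsilon_0/4$ there, which it then plays off against $h(\cdot+\tau)\in PAP_0(\mathbb{T},\mathbb{R}^n,u)$ (translation invariance). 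You instead transplant the covering machinery from the proof of Theorem \ref{thm32}. In that scheme the finite family $s_1,\dots,s_m$ covering $Q_r$ is produced separately for each $r$, so $m$, the $s_k$, and hence the function $F$ all change with $r$; the final step ``each of the finitely many translates has vanishing weighted mean, hence so does the sum'' is then illegitimate, because you are letting $r\to\infty$ in $\frac{1}{u(Q_r)}\int_{Q_r}F_r(t)u(t)\Delta t$ with an $r$-dependent integrand. Moreover, Theorem \ref{thm31} cannot repair this: it only supplies translation invariance of $PAP_0(\mathbb{T},\mathbb{R}^n,u)$, which you are already using term by term, and says nothing about uniformity in the number of terms. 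The missing ingredient is an $r$-independent finite cover of \emph{all} of $\mathbb{T}$, i.e.\ finitely many shifts with $\mathbb{T}\subset\bigcup_{k=1}^m(s_k+B)$ chosen once and for all; this comes not from compactness of $Q_r$ but from properties of the almost periodic function $g$ itself (relative density of its translation numbers combined with its uniform continuity, equivalently Bochner-type precompactness of its set of translates), together with your own parenthetical caveat that each resulting shift must lie in $\Pi$ for the translate of $h$ to be defined on $\mathbb{T}$ and for translation invariance to apply. In fairness, the paper's proof of Theorem \ref{thm32} glosses over exactly the same point, so your proposal inherits that defect rather than the shorter, different argument the paper actually gives for this lemma.

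Two smaller remarks. Your reduction of $\|g\|_{\infty}\le\|f\|_{\infty}$ to the range inclusion is correct (the paper does not even spell this out). But your opening claim that the uniqueness in Theorem \ref{thm32} forces $h=f-g\in PAP_0(\mathbb{T},\mathbb{R}^n,u)$ is backwards: if only $f\in PAP(\mathbb{T},\mathbb{R}^n,u)$ and $g\in AP(\mathbb{T},\mathbb{R}^n)$ are assumed, $h$ need not lie in $PAP_0(\mathbb{T},\mathbb{R}^n,u)$ at all — take $f\equiv 0$ and $g$ any nonzero almost periodic function, in which case the conclusion $g(\mathbb{T})\subset\overline{f(\mathbb{T})}$ also fails. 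The lemma must be read with $h\in PAP_0(\mathbb{T},\mathbb{R}^n,u)$ as part of the hypothesis, i.e.\ $f=g+h$ is the weighted pseudo-almost periodic decomposition; that is exactly the situation in which the paper applies it in Lemma \ref{lem33}, and it is what both the paper's argument and yours actually use.
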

\begin{proof}
If we suppose that $g(\mathbb{T})\subset\overline{f(\mathbb{T})}$ does not hold, then exist $t_0\in\mathbb{T}$ and
$\varepsilon_0>0$ such that
$
\inf_{s\in\mathbb{T}}\|g(t_0)-f(s)\|>\varepsilon_0.
$
Using the continuity of the function $g$,  there exists $\delta>0$ such that for $t\in(t_{0}-\delta,t_{0}+\delta)\cap\mathbb{T}$,
$
\|g(t)-g(t_0)\|<\frac{\varepsilon_0}{2}.
$
Since $g\in AP(\mathbb{T},\mathbb{R}^n)$,   there exists $l_{\frac{\varepsilon_0}{4}}>0$
such that every interval of length $l_{\frac{\varepsilon_0}{4}}$ contains a $\tau\in \Pi$ with the property that
\[
\|g(t+\tau)-g(t)\|<\frac{\varepsilon_0}{4},\,\, t\in \mathbb{T}.
\]
So
{\setlength\arraycolsep{2pt}
\begin{eqnarray*}
\|h(t+\tau)\|&=&\|f(t+\tau)-g(t+\tau)\|\\
&\geq&\|f(t+\tau)-g(t)\|-\|g(t)-g(t+\tau)\|\\
&\geq&\|f(t+\tau)-g(t_0)\|-\|g(t_0)-g(t)\|-\|g(t)-g(t+\tau)\|\\
&>&\frac{\varepsilon_0}{4},\,\,\forall t\in(t_{0}-\delta,t_{0}+\delta)\cap\mathbb{T},
\end{eqnarray*}}
which implies that
\[
 \lim_{r\rightarrow\infty}\frac{1}{u(Q_r)}\int_{Q_r}\|h(t+\tau)\|u(t)\Delta t\geq\frac{ \varepsilon_0}{4}
 \]
 and
this is a contradiction. The proof is complete.
\end{proof}

\begin{lemma}\label{lem33}
 Let $u\in\mathbb{U}_{\infty}^{Inv}$. If $(f_m)_{m\in\mathbb{N}}\subset APA(\mathbb{T},\mathbb{R}^n,u)$ such that $\lim\limits_{m\rightarrow +\infty}||f_{m}-f||_{\infty}=0$,
then $f\in PAP(\mathbb{T},\mathbb{R}^n,u)$.
\end{lemma}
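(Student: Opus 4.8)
The plan is to show that $PAP(\mathbb{T},\mathbb{R}^n,u)$ is a closed subspace of $(BC(\mathbb{T},\mathbb{R}^n),\|\cdot\|_{\infty})$, i.e. that a uniform limit of weighted pseudo-almost periodic functions is again weighted pseudo-almost periodic. First I would invoke Theorem \ref{thm32} to write each $f_m=g_m+h_m$ uniquely, with $g_m\in AP(\mathbb{T},\mathbb{R}^n)$ and $h_m\in PAP_0(\mathbb{T},\mathbb{R}^n,u)$. Since the hypothesis $\|f_m-f\|_{\infty}\to 0$ forces $(f_m)$ to be a Cauchy sequence in the sup-norm, the whole strategy reduces to transferring this Cauchy property to the almost periodic parts $(g_m)$ and then handling the remainders.

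The key step, and the one I expect to carry most of the weight, is the estimate $\|g_m-g_k\|_{\infty}\le\|f_m-f_k\|_{\infty}$. This follows from Lemma \ref{lem32}: by Lemma \ref{lem31} the difference $f_m-f_k=(g_m-g_k)+(h_m-h_k)$ is itself weighted pseudo-almost periodic, its almost periodic component is $g_m-g_k$ (by the uniqueness granted by Theorem \ref{thm32}), and hence Lemma \ref{lem32} yields the inequality directly. Consequently $(g_m)$ is Cauchy; since $AP(\mathbb{T},\mathbb{R}^n)$ with the sup-norm is a Banach space, there is $g\in AP(\mathbb{T},\mathbb{R}^n)$ with $g_m\to g$ uniformly. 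Setting $h:=f-g$, the sequence $h_m=f_m-g_m$ then converges uniformly to $h$, and $h$ is bounded and continuous as a uniform limit of members of $BC(\mathbb{T},\mathbb{R}^n)$.

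It remains to verify $h\in PAP_0(\mathbb{T},\mathbb{R}^n,u)$, that is, $\lim_{r\to\infty}\frac{1}{u(Q_r)}\int_{Q_r}\|h(t)\|u(t)\Delta t=0$. For this I would use a standard $\varepsilon/2$ argument: given $\varepsilon>0$, fix $m$ with $\|h_m-h\|_{\infty}<\varepsilon/2$ and split
\[
\frac{1}{u(Q_r)}\int_{Q_r}\|h(t)\|u(t)\Delta t\le\|h-h_m\|_{\infty}\,\frac{1}{u(Q_r)}\int_{Q_r}u(t)\Delta t+\frac{1}{u(Q_r)}\int_{Q_r}\|h_m(t)\|u(t)\Delta t.
\]
Since $\frac{1}{u(Q_r)}\int_{Q_r}u(t)\Delta t=1$ by the definition of $u(Q_r)$, the first term is at most $\varepsilon/2$, while the second tends to $0$ as $r\to\infty$ because $h_m\in PAP_0(\mathbb{T},\mathbb{R}^n,u)$. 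Thus $\overline{\lim}_{r\to\infty}\frac{1}{u(Q_r)}\int_{Q_r}\|h(t)\|u(t)\Delta t\le\varepsilon/2$, and letting $\varepsilon\to 0$ gives the claim. Therefore $f=g+h$ with $g\in AP(\mathbb{T},\mathbb{R}^n)$ and $h\in PAP_0(\mathbb{T},\mathbb{R}^n,u)$, so $f\in PAP(\mathbb{T},\mathbb{R}^n,u)$.

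The only subtlety worth flagging is that the entire argument hinges on $u\in\mathbb{U}_{\infty}^{Inv}$, which is exactly what makes both the unique decomposition of Theorem \ref{thm32} and the norm inequality of Lemma \ref{lem32} available; without translation invariance the almost periodic part of $f_m-f_k$ need not be uniquely determined, and the reduction to a Cauchy sequence of $g_m$ would break down.
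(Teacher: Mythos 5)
Your proof is correct and follows essentially the same route as the paper's: decompose $f_m=g_m+h_m$, combine Lemma \ref{lem31}, Theorem \ref{thm32} and Lemma \ref{lem32} to obtain $\|g_m-g_k\|_{\infty}\le\|f_m-f_k\|_{\infty}$, use completeness of $AP(\mathbb{T},\mathbb{R}^n)$ to get the limit $g$, and then show $h=f-g\in PAP_0(\mathbb{T},\mathbb{R}^n,u)$ via the same splitting estimate. The only cosmetic difference is that you phrase the final step as an $\varepsilon/2$ argument (explicitly noting $\frac{1}{u(Q_r)}\int_{Q_r}u(t)\Delta t=1$), whereas the paper takes $r\to\infty$ first and then $m\to\infty$; the two are identical in substance.
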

\begin{proof}
Since $(f_m)_{m\in\mathbb{N}}\subset PAP(\mathbb{T},\mathbb{R}^n,u)$, there exist $(g_m)_{m\in\mathbb{N}}\subset AP(\mathbb{T},\mathbb{R}^n)$
and $(h_m)_{m\in\mathbb{N}}\subset PAP_0(\mathbb{T},\mathbb{R}^n,u)$ such that $f_m=g_m+h_m,\,\,\forall m\in\mathbb{N}$. According to Lemma \ref{lem31} and Theorem \ref{thm32},
we have $f_s-f_m=(g_s-g_m)+(h_s-h_m)\in PAP(\mathbb{T},\mathbb{R}^n,u), g_s-g_m\in AP(\mathbb{T},\mathbb{R}^n)$ and $h_s-h_m\in PAP_0(\mathbb{T},\mathbb{R}^n,u)$. From Lemma \ref{lem32} it follows that $||g_{s}-g_m||_{\infty}\leq||f_{s}-f_m||_{\infty}$.
Since $\lim\limits_{m\rightarrow \infty}||f_{m}-f||_{\infty}=0$ it follows that $(f_m)_{m\in\mathbb{N}}$ is a cauchy sequence, and
$||g_{s}-g_m||_{\infty}\rightarrow 0$ as $s,m\rightarrow +\infty$ too, that is, $(g_m)_{m\in\mathbb{N}}$ is a cauchy sequence. Using the fact that
$(AP(\mathbb{T},\mathbb{R}^n),||\cdot||_{\infty})$ is a Banach space, it follows that there exists $g\in AP(\mathbb{T},\mathbb{R}^n)$ such that $\lim\limits_{m\rightarrow \infty}||g_{m}-g||_{\infty}=0$.
Let $h=f-g$, it is easy to see that $\lim\limits_{m\rightarrow \infty}||h_{m}-h||_{\infty}=0$, which yields that $h\in BC(\mathbb{T},\mathbb{R}^n)$.
On the other hand,
{\setlength\arraycolsep{2pt}
\begin{eqnarray*}
\frac{1}{u(Q_r)}\int_{Q_r}||h(t)||u(t)\Delta t&=&\frac{1}{u(Q_r)}\int_{Q_r}||h(t)-h_m(t)+h_m(t)||u(t)\Delta t\\
&\leq&\frac{1}{u(Q_r)}\int_{Q_r}||h(t)-h_m(t)||u(t)\Delta t+\frac{1}{u(Q_r)}\int_{Q-r}||h_m(t)||u(t)\Delta t\\
&\leq&||h_{m}-h||_{\infty}+\frac{1}{u(Q_r)}\int_{Q_r}||h_m(t)||u(t)\Delta t.
\end{eqnarray*}}
Letting $r\rightarrow \infty$, it follows that
\[
\lim_{r\rightarrow \infty}\frac{1}{u(Q_r)}\int_{Q_r}||h(t)||u(t)\Delta t\leq||h_{m}-h||_{\infty}.
\]
Letting $m\rightarrow \infty$ in the previous inequality, we get
\[
\lim_{r\rightarrow \infty}\frac{1}{u(Q_r)}\int_{Q_r}||h(t)||u(t)\Delta t=0,
\]
that is, $h\in PAP_0(\mathbb{T},\mathbb{R}^n,u)$. The proof is complete.
\end{proof}
\begin{corollary}
 Let $u\in\mathbb{U}_{\infty}^{Inv}$. Then
$(PAP(\mathbb{T},\mathbb{R}^n,u),||\cdot||_{\infty})$ is a Banach space.
\end{corollary}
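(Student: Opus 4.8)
The plan is to prove the Corollary by exhibiting $PAP(\mathbb{T},\mathbb{R}^n,u)$ as a \emph{closed linear subspace} of the Banach space $(BC(\mathbb{T},\mathbb{R}^n),\|\cdot\|_\infty)$, and then to invoke the standard fact from functional analysis that a closed subspace of a complete normed space is itself complete. Since $(BC(\mathbb{T},\mathbb{R}^n),\|\cdot\|_\infty)$ was already noted to be a Banach space in Section~2, all that remains is to verify the two structural properties: that $PAP(\mathbb{T},\mathbb{R}^n,u)$ is a vector subspace and that it is norm-closed. First I would observe that the norm $\|\cdot\|_\infty$ is indeed well-defined on $PAP(\mathbb{T},\mathbb{R}^n,u)$: by Definition~\ref{def31} every $f\in PAP(\mathbb{T},\mathbb{R}^n,u)$ decomposes as $f=h+\varphi$ with $h\in AP(\mathbb{T},\mathbb{R}^n)\subset BC(\mathbb{T},\mathbb{R}^n)$ and $\varphi\in PAP_0(\mathbb{T},\mathbb{R}^n,u)\subset BC(\mathbb{T},\mathbb{R}^n)$, so $f\in BC(\mathbb{T},\mathbb{R}^n)$ and $\|f\|_\infty<\infty$. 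Hence $PAP(\mathbb{T},\mathbb{R}^n,u)$ is a subset of $BC(\mathbb{T},\mathbb{R}^n)$ inheriting its norm.

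Next I would check that $PAP(\mathbb{T},\mathbb{R}^n,u)$ is a linear subspace. Closure under addition is immediate from Lemma~\ref{lem31}, which gives $f+g\in PAP(\mathbb{T},\mathbb{R}^n,u)$ whenever $f,g\in PAP(\mathbb{T},\mathbb{R}^n,u)$. For closure under scalar multiplication, given $f=h+\varphi$ with $h\in AP(\mathbb{T},\mathbb{R}^n)$ and $\varphi\in PAP_0(\mathbb{T},\mathbb{R}^n,u)$ and a scalar $\lambda\in\mathbb{R}$, I would write $\lambda f=\lambda h+\lambda\varphi$ and note that $\lambda h\in AP(\mathbb{T},\mathbb{R}^n)$ directly from Definition~2.4, while $\lambda\varphi\in PAP_0(\mathbb{T},\mathbb{R}^n,u)$ because the defining limit scales by $|\lambda|$:
\[
\lim_{r\to+\infty}\frac{1}{u(Q_r)}\int_{Q_r}\|\lambda\varphi(t)\|u(t)\Delta t
=|\lambda|\lim_{r\to+\infty}\frac{1}{u(Q_r)}\int_{Q_r}\|\varphi(t)\|u(t)\Delta t=0.
\]
Together with the obvious membership of the zero function, this establishes that $PAP(\mathbb{T},\mathbb{R}^n,u)$ is a subspace of $BC(\mathbb{T},\mathbb{R}^n)$.

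Finally I would establish closedness, which is exactly the content of Lemma~\ref{lem33}: any sequence $(f_m)_{m\in\mathbb{N}}\subset PAP(\mathbb{T},\mathbb{R}^n,u)$ converging to $f$ in $\|\cdot\|_\infty$ has its limit $f$ again in $PAP(\mathbb{T},\mathbb{R}^n,u)$. Thus $PAP(\mathbb{T},\mathbb{R}^n,u)$ is closed in $BC(\mathbb{T},\mathbb{R}^n)$. A closed subspace of a Banach space is complete, so $(PAP(\mathbb{T},\mathbb{R}^n,u),\|\cdot\|_\infty)$ is a Banach space, completing the argument. I do not expect any genuine obstacle here, since the essential analytic work—propagating the mean-value-zero condition through a uniform limit—was already carried out in the proof of Lemma~\ref{lem33}; the Corollary is a clean packaging of that lemma together with Lemma~\ref{lem31} and the completeness of $BC(\mathbb{T},\mathbb{R}^n)$.
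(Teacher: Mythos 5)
Your proposal is correct and follows essentially the same route as the paper: the paper likewise deduces the corollary from Lemma~\ref{lem33} (closedness) together with $PAP(\mathbb{T},\mathbb{R}^n,u)\subset BC(\mathbb{T},\mathbb{R}^n)$ and the completeness of $(BC(\mathbb{T},\mathbb{R}^n),\|\cdot\|_{\infty})$. Your only addition is the explicit verification that $PAP(\mathbb{T},\mathbb{R}^n,u)$ is a linear subspace, a detail the paper leaves implicit.
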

\begin{proof}
 By Lemma \ref{lem33}, $PAP(\mathbb{T},\mathbb{R}^n,u)$ is closed and $PAP(\mathbb{T},\mathbb{R}^n,u)\subset BC(\mathbb{T},\mathbb{R}^n)$. Therefore,
$(PAP(\mathbb{T},\mathbb{R}^n,u),||\cdot||_{\infty})$ is a Banach space. The proof is complete.
\end{proof}
\begin{lemma}\label{le51}
 Let $u\in\mathbb{U}_{\infty}^{Inv}$. If $f:\mathbb{R}\rightarrow\mathbb{R}$ satisfies the Lipschitz condition and $\varphi\in PAP(\mathbb{T},\mathbb{R},u), \tau\in\Pi$,
then $\Gamma:t\rightarrow f(\varphi(t-\tau))$ belongs to $PAP(\mathbb{T},\mathbb{R},u)$.
\end{lemma}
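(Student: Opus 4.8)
The plan is to reduce everything to the canonical decomposition supplied by Theorem \ref{thm32} together with the translation invariance of $PAP_0(\mathbb{T},\mathbb{R},u)$, which is available precisely because $u\in\mathbb{U}_{\infty}^{Inv}$ (Theorem \ref{thm31}). Since $\varphi\in PAP(\mathbb{T},\mathbb{R},u)$, I would write $\varphi=g+h$ with $g\in AP(\mathbb{T},\mathbb{R})$ and $h\in PAP_0(\mathbb{T},\mathbb{R},u)$, and then split
\[
f(\varphi(t-\tau))=f(g(t-\tau))+\big[f(\varphi(t-\tau))-f(g(t-\tau))\big]=:\Gamma_1(t)+\Gamma_2(t).
\]
The goal is to show $\Gamma_1\in AP(\mathbb{T},\mathbb{R})$ and $\Gamma_2\in PAP_0(\mathbb{T},\mathbb{R},u)$; by Definition \ref{def31} this gives $\Gamma\in PAP(\mathbb{T},\mathbb{R},u)$.

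For the almost periodic part, I would first observe that $t\mapsto g(t-\tau)$ is again almost periodic: since $\tau\in\Pi$ the point $t-\tau$ ranges over $\mathbb{T}$, so any almost period $\theta\in\Pi$ of $g$ (with $\|g(s+\theta)-g(s)\|<\eta$ for all $s\in\mathbb{T}$) is also an almost period of $g(\cdot-\tau)$. Then, denoting by $L$ the Lipschitz constant of $f$, for $\varepsilon>0$ I would choose an almost period $\theta\in\Pi$ of $g(\cdot-\tau)$ satisfying $\|g(t-\tau+\theta)-g(t-\tau)\|<\varepsilon/L$, so that
\[
|f(g(t-\tau+\theta))-f(g(t-\tau))|\le L\,\|g(t-\tau+\theta)-g(t-\tau)\|<\varepsilon,\qquad t\in\mathbb{T},
\]
which shows $\Gamma_1\in AP(\mathbb{T},\mathbb{R})$ (this is the standard fact that the composition of a Lipschitz, hence uniformly continuous, map with an almost periodic function is almost periodic).

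For the ergodic part, the Lipschitz condition immediately bounds $\Gamma_2$ pointwise by the translated tail of $h$:
\[
|\Gamma_2(t)|=|f(\varphi(t-\tau))-f(g(t-\tau))|\le L\,|\varphi(t-\tau)-g(t-\tau)|=L\,|h(t-\tau)|,
\]
from which boundedness and continuity of $\Gamma_2$ are clear. Because $u\in\mathbb{U}_{\infty}^{Inv}$, Theorem \ref{thm31} guarantees that $PAP_0(\mathbb{T},\mathbb{R},u)$ is translation invariant, so $h(\cdot-\tau)\in PAP_0(\mathbb{T},\mathbb{R},u)$, and therefore
\[
\frac{1}{u(Q_r)}\int_{Q_r}|\Gamma_2(t)|u(t)\Delta t\le \frac{L}{u(Q_r)}\int_{Q_r}|h(t-\tau)|u(t)\Delta t\longrightarrow 0
\]
as $r\to\infty$, giving $\Gamma_2\in PAP_0(\mathbb{T},\mathbb{R},u)$ and completing the proof. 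The one genuinely load-bearing point, and the step I would treat most carefully, is the translation invariance of $PAP_0(\mathbb{T},\mathbb{R},u)$: it is exactly what forces the hypothesis $u\in\mathbb{U}_{\infty}^{Inv}$ rather than merely $u\in\mathbb{U}_{\infty}$, and without it the translate $h(\cdot-\tau)$ need not lie in the ergodic space, so the argument for $\Gamma_2$ would break down. Everything else is a routine consequence of the Lipschitz estimate.
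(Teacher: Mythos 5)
Your proof is correct and follows essentially the same route as the paper: the same splitting $f(\varphi(t-\tau))=f(g(t-\tau))+\bigl[f(\varphi(t-\tau))-f(g(t-\tau))\bigr]$, the same $\varepsilon/L$ almost-period argument for the first term, and the same Lipschitz bound plus translation invariance of $PAP_0(\mathbb{T},\mathbb{R},u)$ (Theorem \ref{thm31}) for the second. Your explicit remark that translation invariance is the step forcing $u\in\mathbb{U}_{\infty}^{Inv}$ is exactly the load-bearing point the paper relies on as well.
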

\begin{proof}
Since $\varphi\in PAP(\mathbb{T},\mathbb{R},u)$, there exist $\varphi_1\in AP(\mathbb{T},\mathbb{R})$ and $\varphi_2\in PAP_0(\mathbb{T},\mathbb{R},u)$
such that $\varphi=\varphi_1+\varphi_2$. Set
\[
\Gamma(t)=f(\varphi(t-\tau))=f(\varphi_1(t-\tau))+[f(\varphi_1(t-\tau)+\varphi_2(t-\tau))-f(\varphi_1(t-\tau))]:=\Gamma_1(t)+\Gamma_2(t).
\]
First, we prove that $\Gamma_1\in AP(\mathbb{T},\mathbb{R})$. Since $f$ satisfies the Lipschitz condition, there
exists a positive constant $L$ such that $|f(u_1)-f(u_2)|\leq L|u_1-u_2|$, $\forall u_1,u_2\in\mathbb{R}$.
For any $\varepsilon>0$, since $\varphi_1\in AP(\mathbb{T},\mathbb{R})$, it is possible to find a real number
$l=l(\varepsilon)>0$, for any interval with length $l(\varepsilon)$, there exists a number $\alpha=\alpha(\varepsilon)\in \Pi$
in this interval such that $|\varphi_1(t+\alpha)-\varphi_1(t)|<\frac{\varepsilon}{L}$ for $\forall t\in\mathbb{T}$, then
\[
|\Gamma_1(t+\alpha)-\Gamma_1(t)|=|f(\varphi_1(t+\alpha-\tau))-f(\varphi_1(t-\tau))|\leq L|\varphi_1(t+\alpha-\tau)-\varphi_1(t-\tau)|<\varepsilon,
\]
which implies that $\Gamma_1\in AP(\mathbb{T},\mathbb{R})$. Next we prove that $\Gamma_2\in PAP_0(\mathbb{T},\mathbb{R},u)$. Since $\varphi_2\in PAP_0(\mathbb{T},\mathbb{R},u)$, by using Theorem \ref{thm31}, we have $\varphi_2(t-\tau)\in PAP_0(\mathbb{T},\mathbb{R},u)$, so
{\setlength\arraycolsep{2pt}
\begin{eqnarray*}
\lim_{r\rightarrow\infty}\frac{1}{u(Q_r)}\int_{Q_r}|\Gamma_2(t)|u(t)\Delta t&=&\lim_{r\rightarrow\infty}\frac{1}{u(Q_r)}\int_{Q_r}|f(\varphi_1(t-\tau)+\varphi_2(t-\tau))-f(\varphi_1(t-\tau))|u(t)\Delta t\\
&\leq&\lim_{r\rightarrow\infty}\frac{L}{u(Q_r)}\int_{Q_r}|\varphi_2(t-\tau)|u(t)\Delta t=0,
\end{eqnarray*}}
which implies that $\Gamma_2\in PAP_0(\mathbb{T},\mathbb{R},u)$. Consequently, $\Gamma\in PAP(\mathbb{T},\mathbb{R},u)$. The proof is complete.
\end{proof}

\section{Weighted pseudo-almost periodic solutions of linear dynamic equations on time scales}\setcounter{equation}{0}
\vspace{1ex}
\indent

Consider the non-autonomous equation
\begin{equation} \label{d1}
x^{\Delta}=A(t)x+F(t)
\end{equation}
and its associated homogeneous equation
\begin{equation} \label{d2}
x^{\Delta}=A(t)x,
\end{equation}
where the $n\times n$ coefficient matrix $A(t)$ is continuous on $\mathbb{T}$ and the column vector $F=(f_1,f_2,\ldots,f_n)^T: \mathbb{T}\rightarrow \mathbb{R}^n$. Define $\|F\|=\sup\limits_{t\in\mathbb{T}}\|F(t)\|$. We will call $A(t)$ is almost periodic if all of its entries are almost periodic.
\begin{definition} \cite{19}\label{def41}
Equation \eqref{d2} is said to admit an exponential dichotomy on $\mathbb{T}$ if there
exist positive constants $k,\alpha$, projection $P$ and the
fundamental solution matrix $X(t)$ of \eqref{d2}, satisfying
\[
\|X(t)PX^{-1}(\sigma(s))\|_0\leq
ke_{\ominus\alpha}(t,\sigma(s)),\,\,\,s,t\in\mathbb{T},\,\,t\geq\sigma(s),
\]
\[
\|X(t)(I-P)X^{-1}(\sigma(s))\|_0\leq
ke_{\ominus\alpha}(\sigma(s),t),\,\,\,s,t\in\mathbb{T},\,\,t\leq\sigma(s),
\]
where $\|\cdot\|_0$ is a matrix norm on $\mathbb{T}$.
\end{definition}
\begin{lemma}\label{lem41}
Suppose $a>0$, then
\[
e_{\ominus a}(t,s)\leq\exp\bigg( \frac{-a}{1+\overline{\mu}a}(t-s)\bigg),\,\,\forall s\leq t,
\]
where $\overline{\mu}
=\sup\limits_{t\in\mathbb{T}}\mu(t)$.
\end{lemma}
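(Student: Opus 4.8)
The plan is to pass to the exponential representation of $e_{\ominus a}$ and reduce the claim to a pointwise inequality for the integrand. First I would compute the cylinder transform of $\ominus a$. Since $(\ominus a)(\tau)=-\frac{a}{1+\mu(\tau)a}$, one has $1+\mu(\tau)(\ominus a)(\tau)=\frac{1}{1+\mu(\tau)a}$, so at a right-scattered point ($\mu(\tau)>0$),
\[
\xi_{\mu(\tau)}\big((\ominus a)(\tau)\big)=\frac{\ln\frac{1}{1+\mu(\tau)a}}{\mu(\tau)}=-\frac{\ln(1+\mu(\tau)a)}{\mu(\tau)},
\]
while at a right-dense point ($\mu(\tau)=0$) it equals $-a$. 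Here $\ln$ is the real logarithm, which is legitimate because $1+\mu(\tau)a>0$. Hence
\[
e_{\ominus a}(t,s)=\exp\Big(-\int_s^t \frac{\ln(1+\mu(\tau)a)}{\mu(\tau)}\,\Delta\tau\Big),
\]
with the understanding that the integrand is read as $a$ wherever $\mu(\tau)=0$ (this is the $h\to0$ limit of $\frac{\ln(1+ha)}{h}$).

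The heart of the argument is the uniform pointwise bound $\frac{\ln(1+\mu(\tau)a)}{\mu(\tau)}\geq \frac{a}{1+\overline{\mu}a}$ for every $\tau\in\mathbb{T}$. Setting $y=\mu(\tau)a$, so that $0\leq y\leq \overline{\mu}a$, this amounts to $\frac{\ln(1+y)}{y}\geq\frac{1}{1+\overline{\mu}a}$. I would prove it by combining two elementary facts: the standard inequality $\ln(1+y)\geq \frac{y}{1+y}$ for $y\geq0$ (immediate from comparing derivatives, with equality at $y=0$), and the monotonicity observation $\frac{y}{1+y}\geq \frac{y}{1+\overline{\mu}a}$, which holds precisely because $y\leq\overline{\mu}a$. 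Together these give $\ln(1+y)\geq \frac{y}{1+\overline{\mu}a}$, i.e. the claimed bound; the value $y=0$ is covered by the limiting integrand $a\geq \frac{a}{1+\overline{\mu}a}$.

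Finally I would integrate the pointwise bound over $[s,t]_{\mathbb{T}}$ for $s\leq t$: since $\int_s^t \Delta\tau=t-s$,
\[
\int_s^t\frac{\ln(1+\mu(\tau)a)}{\mu(\tau)}\,\Delta\tau\geq \frac{a}{1+\overline{\mu}a}(t-s).
\]
Multiplying by $-1$ and applying the increasing exponential to the representation from the first step yields $e_{\ominus a}(t,s)\leq \exp\big(\frac{-a}{1+\overline{\mu}a}(t-s)\big)$, which is the assertion.

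I expect the main obstacle to be bookkeeping rather than depth. The one genuine point requiring care is treating right-dense and right-scattered points uniformly (the $\mu(\tau)=0$ convention for the integrand) so that the pointwise estimate integrates correctly, together with the boundary case $\overline{\mu}=+\infty$, where $\frac{a}{1+\overline{\mu}a}=0$ and the inequality degenerates to $e_{\ominus a}(t,s)\leq1$ — true because $\ominus a\in\mathcal{R}^+$ makes $e_{\ominus a}(t,s)$ a product of factors $\frac{1}{1+\mu a}\leq1$. The scalar inequality $\ln(1+y)\geq y/(1+y)$ is the only analytic input and is entirely routine.
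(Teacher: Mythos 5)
Your proposal is correct and follows essentially the same route as the paper: both compute the cylinder transform $\xi_{\mu(\tau)}(\ominus a)$ by cases ($\mu(\tau)=0$ versus $\mu(\tau)>0$), reduce the claim to the elementary inequality $\ln(1+y)\geq \frac{y}{1+y}$ combined with $\mu(\tau)\leq\overline{\mu}$, and then integrate the pointwise bound and exponentiate. Your extra remark on the degenerate case $\overline{\mu}=+\infty$ is a small refinement the paper omits, but it does not change the argument.
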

\begin{proof}For every $\tau\in \mathbb{T}$, if $\mu(\tau)= 0$, then
\[
\xi_{\mu(\tau)}(\ominus a)=\frac{-a}{1+\mu(\tau)a}=-a\leq\frac{-a}{1+1+\overline{\mu}a};
\]
if $\mu(\tau)>0$, then
{\setlength\arraycolsep{2pt}
\begin{eqnarray*}
\xi_{\mu(\tau)}(\ominus a)&=&\frac{\log(1+\mu(\tau)\ominus a)}{\mu(\tau)}=\frac{\log(1-\mu(\tau)\frac{a}{1+\mu(\tau)a})}{\mu(\tau)}=\frac{-\log(1+\mu(\tau)a)}{\mu(\tau)}\\
&\leq&\frac{\frac{-\mu(\tau)a}{1+\mu(\tau)a}}{\mu(\tau)}=\frac{-a}{1+\mu(\tau)a}\leq\frac{-a}{1+\overline{\mu}a}.
\end{eqnarray*}}
Thus, we have
\[
\xi_{\mu(\tau)}(\ominus a)\leq\frac{-a}{1+\overline{\mu}a},\,\,\,\,\forall \tau\in\mathbb{T},
\]
so
\[
e_{\ominus a}(t,s)=\exp\bigg(\int_s^{t}\xi_{\mu(\tau)}(\ominus a)\Delta\tau\bigg)\leq\exp\bigg(\int_s^{t}\frac{-a}{1+\overline{\mu}a}\Delta\tau\bigg)
=\exp\bigg(\frac{-a}{1+\overline{\mu}a}(t-s)\bigg).
\]
The proof is complete.
\end{proof}
\begin{lemma}\cite{20}\label{lem42}
Let $c_{i}(t)$ be an almost periodic function on $\mathbb{T}$, where
$c_{i}(t)>0, -c_{i}(t)\in \mathcal{R}^{+}, i=1,2,\ldots,n, \forall t\in
\mathbb{T}$ and $\min\limits_{1 \leq i \leq n}\{\inf\limits_{t\in
\mathbb{T}}c_i(t)\}=\widetilde{m}
> 0$, then the linear system
\begin{eqnarray*}\label{e23}
x^{\Delta}(t)=\mathrm{diag}(-c_{1}(t),-c_{2}(t),\dots,-c_{n}(t))x(t)
\end{eqnarray*}
admits an exponential dichotomy on $\mathbb{T}$.
\end{lemma}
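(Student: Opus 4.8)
The plan is to exploit the diagonal structure of the system. For any fixed $t_0\in\mathbb{T}$, a fundamental solution matrix is
\[
X(t)=\mathrm{diag}\big(e_{-c_1}(t,t_0),\ldots,e_{-c_n}(t,t_0)\big),
\]
and by the standard properties of the generalized exponential function (the semigroup identity together with part $(iv)$ of the first lemma of Section 2) one gets $X(t)X^{-1}(\sigma(s))=\mathrm{diag}\big(e_{-c_1}(t,\sigma(s)),\ldots,e_{-c_n}(t,\sigma(s))\big)$. Since every diagonal entry corresponds to a decaying exponential, I would take the projection $P=I$; then $I-P=0$, so the second estimate in Definition \ref{def41} holds trivially. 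It remains to produce constants $k,\alpha>0$ with $\ominus\alpha\in\mathcal{R}^+$ such that $\|X(t)X^{-1}(\sigma(s))\|_0\le k\,e_{\ominus\alpha}(t,\sigma(s))$ for $t\ge\sigma(s)$.

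Because $\|\cdot\|_0$ evaluated on a diagonal matrix is comparable to $\max_i|e_{-c_i}(t,\sigma(s))|$, the problem reduces to the scalar bound $e_{-c_i}(t,\sigma(s))\le e_{\ominus\widetilde{m}}(t,\sigma(s))$ for each $i$ and all $t\ge\sigma(s)$, where I set $\alpha:=\widetilde{m}$ (note $\ominus\widetilde{m}\in\mathcal{R}^+$ automatically, since $1+\mu(\tau)(\ominus\widetilde{m})=1/(1+\mu(\tau)\widetilde{m})>0$). I would obtain this by comparing cylinder transformations pointwise: if $\xi_{\mu(\tau)}(-c_i(\tau))\le\xi_{\mu(\tau)}(\ominus\widetilde{m})$ for every $\tau$, then integrating over $[\sigma(s),t]_{\mathbb{T}}$ (whose limits are in increasing order) and exponentiating gives the claim.

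To establish this cylinder inequality I would split into the two usual cases, mirroring the proof of Lemma \ref{lem41}. When $\mu(\tau)=0$ both sides reduce to $-c_i(\tau)$ and $-\widetilde{m}$, and $-c_i(\tau)\le-\widetilde{m}$ follows from $\inf_{t}c_i(t)\ge\widetilde{m}$. When $\mu(\tau)>0$, using $-c_i\in\mathcal{R}^+$ (so that $1-\mu(\tau)c_i(\tau)>0$ and the logarithm is defined), the required inequality $\frac{\log(1-\mu(\tau)c_i(\tau))}{\mu(\tau)}\le\frac{-\log(1+\mu(\tau)\widetilde{m})}{\mu(\tau)}$ is, after exponentiating and clearing denominators, equivalent to $(1-\mu(\tau)c_i(\tau))(1+\mu(\tau)\widetilde{m})\le1$, i.e. to $\widetilde{m}\,(1-\mu(\tau)c_i(\tau))\le c_i(\tau)$, which holds since $\widetilde{m}\le c_i(\tau)$ and $0<1-\mu(\tau)c_i(\tau)\le1$.

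The step I expect to be the main obstacle is exactly this pointwise comparison of cylinder transformations, as it is the only place where the regressivity hypothesis $-c_i\in\mathcal{R}^+$ and the uniform lower bound $\widetilde{m}$ must be combined carefully; once it is secured, assembling the two dichotomy estimates (with $k$ absorbing the norm-equivalence constant for diagonal matrices and $\alpha=\widetilde{m}$) is routine. As a fallback I would instead bound $e_{-c_i}(t,\sigma(s))$ directly by $\exp\!\big(-\frac{\widetilde{m}}{1+\overline{\mu}\,\widetilde{m}}(t-\sigma(s))\big)$ as in Lemma \ref{lem41}, but that yields a plain exponential rather than the $e_{\ominus\alpha}$ form required by Definition \ref{def41}, so the cylinder-comparison route is preferable.
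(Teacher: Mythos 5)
Your proof is correct. The paper itself offers no argument for this lemma --- it is imported by citation from reference [20] --- and your construction (diagonal fundamental matrix $X(t)=\mathrm{diag}(e_{-c_i}(t,t_0))$, projection $P=I$ so the second dichotomy estimate is vacuous, and the pointwise cylinder-transformation comparison $\xi_{\mu(\tau)}(-c_i(\tau))\le\xi_{\mu(\tau)}(\ominus\widetilde{m})$, which after integration gives $e_{-c_i}(t,\sigma(s))\le e_{\ominus\widetilde{m}}(t,\sigma(s))$ for $t\ge\sigma(s)$, with $\alpha=\widetilde{m}$ and $k$ absorbing the diagonal norm-equivalence constant) is exactly the standard route taken in that cited source; in particular your reduction of the $\mu(\tau)>0$ case to $\widetilde{m}(1-\mu(\tau)c_i(\tau))\le c_i(\tau)$ is the right key inequality.
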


\begin{theorem}\label{thm41}
Let $u\in\mathbb{U}_{\infty}^{Inv}$. Assume that $A(t)$ is almost periodic, \eqref{d2} admits an exponential dichotomy and the function $F\in PAP_0(\mathbb{T},\mathbb{R}^n,u)$.
Then \eqref{d1} has a unique bounded solution $x\in PAP_0(\mathbb{T},\mathbb{R}^n,u)$.
\end{theorem}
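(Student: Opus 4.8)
The plan is to construct the solution explicitly from the exponential dichotomy, verify it is the unique bounded solution, and only then show it lies in $PAP_0(\mathbb{T},\mathbb{R}^n,u)$; the last step is the crux. Let $X(t)$ and $P$ be the fundamental matrix and projection from Definition \ref{def41}. The natural candidate is the Green-function solution
\[
x(t)=\int_{-\infty}^{t}X(t)PX^{-1}(\sigma(s))F(s)\Delta s-\int_{t}^{+\infty}X(t)(I-P)X^{-1}(\sigma(s))F(s)\Delta s.
\]
First I would check this is well defined and bounded: using the two dichotomy estimates of Definition \ref{def41} together with Lemma \ref{lem41} (which converts $e_{\ominus\alpha}$ into a genuine exponential with rate $\beta=\frac{\alpha}{1+\overline{\mu}\alpha}$), both integrals are dominated by $k e^{\beta\overline{\mu}}\|F\|_{\infty}\int_{\mathbb{T}}e^{-\beta|t-s|}\Delta s$, which is finite, giving a uniform bound of the form $\frac{2k e^{\beta\overline{\mu}}}{\beta}\|F\|_{\infty}$. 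Differentiating confirms it solves \eqref{d1}: the $t$-derivative of each integral produces an interior term (which assembles into $A(t)x(t)$ via $X^{\Delta}=A X$) plus a boundary term at $s=t$, and the two boundary terms combine through $P+(I-P)=I$ and $X(\sigma(t))X^{-1}(\sigma(t))=I$ to give exactly $F(t)$. Uniqueness is standard: the difference of two bounded solutions solves \eqref{d2} and is bounded on all of $\mathbb{T}$, and the two dichotomy inequalities force both its stable and unstable components to vanish.

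For the membership $x\in PAP_0(\mathbb{T},\mathbb{R}^n,u)$ I must show $\lim_{r\to\infty}\frac{1}{u(Q_r)}\int_{Q_r}\|x(t)\|u(t)\Delta t=0$. From the estimate above,
\[
\|x(t)\|\le k e^{\beta\overline{\mu}}\int_{\mathbb{T}}e^{-\beta|t-s|}\|F(s)\|\Delta s,
\]
so it suffices to prove that this exponential superposition of $\|F\|$ inherits the weighted-mean-zero property of $F$. The idea is to view the integral as a weighted average, over the delay $\theta=|t-s|$, of translates of $\|F\|$. Since $u\in\mathbb{U}_{\infty}^{Inv}$, Theorem \ref{thm31} guarantees that $PAP_0(\mathbb{T},\mathbb{R}^n,u)$ is translation invariant, so each translate of $\|F\|$ again has weighted mean zero, while the factor $e^{-\beta\theta}$ is integrable in $\theta$ and supplies an integrable dominating function; the uniform bound $\frac{1}{u(Q_r)}\int_{Q_r}\|F(\cdot-\tau)\|u\,\Delta t\le\|F\|_{\infty}$ would then let me interchange the $\theta$-average with the limit in $r$ by dominated convergence and conclude.

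The main obstacle is making this interchange rigorous on a general time scale, because the delays $t-s$ need not lie in $\Pi$, so translation invariance is only available along $\Pi$. I would handle this by an $\varepsilon$–$N$ splitting: fix $N\in\Pi$ with $N>0$ and split each integral at the point $t-N\in\mathbb{T}$. On the far-past part $\{s\le t-N\}$ I bound $\|F\|\le\|F\|_{\infty}$ and use $\int_{-\infty}^{t-N}e^{-\beta(t-s)}\Delta s\le\frac{1}{\beta}e^{-\beta N}$, so after dividing by $u(Q_r)$ and invoking $\frac{1}{u(Q_r)}\int_{Q_r}u\,\Delta t=1$ this part contributes at most $\frac{k e^{\beta\overline{\mu}}}{\beta}\|F\|_{\infty}e^{-\beta N}$, uniformly in $r$. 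On the recent window $\{t-N\le s\le t\}$ I exchange the order of $\Delta$-integration by a Tonelli-type argument for the nonnegative integrand and bound the resulting inner $t$-integral by enlarging $Q_r$ to $Q_{r+N}$; the boundedness of the ratio $u(Q_{r+N})/u(Q_r)$, which follows from Theorem \ref{thm31}, together with $F\in PAP_0(\mathbb{T},\mathbb{R}^n,u)$, forces this windowed weighted mean to zero as $r\to\infty$. Letting first $r\to\infty$ and then $N\to\infty$ kills both parts and completes the argument. The delicate points throughout are the uniform-in-$r$ domination and the weight comparison $u(Q_{r+N})\lesssim u(Q_r)$, which is precisely where the hypothesis $u\in\mathbb{U}_{\infty}^{Inv}$ is indispensable.
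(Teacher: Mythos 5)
Your overall route coincides with the paper's: the same Green's-function candidate built from the dichotomy, boundedness and uniqueness via the dichotomy estimates (the paper simply cites Theorem 5.1 of \cite{23} for that part), and then, for the membership in $PAP_0(\mathbb{T},\mathbb{R}^n,u)$, the conversion of $e_{\ominus\alpha}$ into a genuine exponential with rate $\beta=\frac{\alpha}{1+\overline{\mu}\alpha}$ via Lemma \ref{lem41}, an exchange of the order of $\Delta$-integration, and an appeal to translation invariance plus an interchange of limit and integral. You have, moreover, put your finger on the one genuinely delicate point, which the paper passes over in silence: Theorem \ref{thm31} makes $PAP_0(\mathbb{T},\mathbb{R}^n,u)$ invariant only under translations belonging to $\Pi$, while the shifts $t-s$ produced by the convolution form a continuum. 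The paper nevertheless applies Theorem \ref{thm31} to $F(\cdot-s-k)$ for \emph{every} value of the integration variable $s$, and then invokes Theorem \ref{thm21} where what is actually available is domination of $Ke^{-\beta s}\Gamma_r(s)$ by $Ke^{-\beta s}\|F\|_{\infty}$, not monotonicity in $r$.

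The trouble is that your repair of this point does not close, so your proof has a genuine gap at the window step. After Tonelli, the window contribution is
\begin{equation*}
\frac{1}{u(Q_r)}\int_{Q_{r+N}}\|F(s)\|\Bigl(\int_{[s,s+N]_{\mathbb{T}}\cap Q_r}e^{-\beta(t-s)}u(t)\,\Delta t\Bigr)\Delta s,
\end{equation*}
in which $\|F(s)\|$ is paired with the \emph{local mass} of the weight near $s$, whereas the hypothesis $F\in PAP_0(\mathbb{T},\mathbb{R}^n,u)$ controls $\|F(s)\|$ paired with the \emph{pointwise} value $u(s)$. Passing from one to the other requires an estimate of the form $\int_{[s,s+N]_{\mathbb{T}}}u(t)\,\Delta t\le C_N\,u(s)$ (at least for $|s|$ large), and nothing in the definition of $\mathbb{U}_{\infty}^{Inv}$ provides it: that class bounds $\limsup_{|t|\to\infty}u(t+\tau)/u(t)$ only for each fixed $\tau\in\Pi$, with no uniformity over the continuum of shifts $t-s\in[0,N]$, which again need not lie in $\Pi$. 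The global ratio bound $u(Q_{r+N})/u(Q_r)\le C$ cannot substitute for this, because the mismatch sits pointwise inside the $s$-integral. Concretely, on $\mathbb{T}=\bigcup_{k}[2k,2k+1]$ take $u\equiv 1$ except for triangular spikes of fixed small width $w$ and height $2^{|k|}$ centered at the points $2k+\frac12$: one checks $u\in\mathbb{U}_{\infty}^{Inv}$ (the ratio for the shift $2m$ is at most $2^{|m|}$), yet $\frac{1}{u(s)}\int_{[s,s+2]_{\mathbb{T}}}u(t)\,\Delta t\ge w\,2^{|k|}\to\infty$ along $s=2k$, so no constant $C_N$ exists and the window estimate fails for this admissible weight. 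Your splitting therefore only relocates the non-$\Pi$-shift difficulty from the tail (which you do control, uniformly in $r$) into the window, where it reappears intact. To be fair, the paper's own proof suffers from exactly the same unresolved issue and "solves" it only by applying $\Pi$-translation invariance to arbitrary real shifts; but as written, your window step is not justified by the stated hypotheses.
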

\begin{proof}
Similar to the proof of Theorem 5.1 in \cite{23}, we have
\[
x(t)=\int^t_{-\infty}X(t)PX^{-1}(\sigma(s))F(s)\Delta s-\int_t^{+\infty}X(t)(I-P)X^{-1}(\sigma(s))F(s)\Delta s
\]
is a unique bounded solution of \eqref{d1}. Next, we will show that $x\in PAP_0(\mathbb{T},\mathbb{R}^n,u)$. Let
\[
I(t)=\int^t_{-\infty}X(t)PX^{-1}(\sigma(s))F(s)\Delta s,\,H(t)=\int_t^{+\infty}X(t)(I-P)X^{-1}(\sigma(s))F(s)\Delta s.
\]
By using Lemma \ref{lem23}, Lemma \ref{lem41}  and in view of Definition \ref{def41}, we can get
{\setlength\arraycolsep{2pt}
\begin{eqnarray*}
\lim_{r\rightarrow \infty}\frac{1}{u(Q_r)}\int_{Q_r}\|I(t)\|u(t)\Delta t
&=&\lim_{r\rightarrow \infty}\frac{1}{u(Q_r)}\int_{Q_r}\big\|\int^t_{-\infty}X(t)PX^{-1}(\sigma(s))F(s)\Delta s\big\|u(t)\Delta t\\
&\leq&\lim_{r\rightarrow \infty}\frac{1}{u(Q_r)}\int_{Q_r}\big(\int^t_{-\infty}\|X(t)PX^{-1}(\sigma(s))\|\|F(s)\|\Delta s\big)u(t)\Delta t\\
&\leq&\lim_{r\rightarrow \infty}\frac{1}{u(Q_r)}\int_{Q_r}\big(\int^t_{-\infty}K e_{\ominus\alpha}(t,\sigma(s))\|F(s)\|\Delta s\big)u(t)\Delta t\\
&\leq&\lim_{r\rightarrow \infty}\frac{1}{u(Q_r)}\int_{Q_r}\big(\int^t_{-\infty}K e^{-\frac{\alpha}
{1+\overline{u}\alpha}(t-\sigma(s))}\|F(s)\|\Delta s\big)u(t)\Delta t\\
&\leq&\lim_{r\rightarrow \infty}\frac{1}{u(Q_r)}\int_{Q_r}\big(\int^t_{-\infty}K e^{-\frac{\alpha}
{1+\overline{u}\alpha}(t-s-k)}\|F(s)\|\Delta s\big)u(t)\Delta t\\
&=&\lim_{r\rightarrow \infty}\frac{1}{u(Q_r)}\int_{Q_r}\big(\int^{+\infty}_{-k}K e^{-\frac{\alpha}
{1+\overline{u}\alpha}s}\|F(t-s-k)\|\Delta s\big)u(t)\Delta t\\
&=&\lim_{r\rightarrow \infty}\int_{-k}^{+\infty}K e^{-\frac{\alpha}
{1+\overline{u}\alpha}s}\big(\frac{1}{u(Q_r)}\int_{Q_r}\|F(t-s-k)\|u(t)\Delta t\big)\Delta s,
\end{eqnarray*}}
where $k(>0)\in \Pi$. Consider the following function
\[
\Gamma_r(s)=\frac{1}{u(Q_r)}\int_{Q_r}\|F(t-s-k)\|\mu(t)\Delta t.
\]
Obviously, $\Gamma_r(s)$ is bounded. By using Corollary \ref{cl22}, we have that $\Gamma_r(s)$ is $\Delta$-measurable and by using Theorem \ref{thm31}, we can get
$\lim\limits_{r\rightarrow\infty}\Gamma_r(s)=0$. Consequently, by Theorem \ref{thm21}, we obtain
{\setlength\arraycolsep{2pt}
\begin{eqnarray} \label{d3}
\lim_{r\rightarrow\infty}\frac{1}{u(Q_r)}\int_{Q_r}\|I(t)\|u(t)\Delta t
&=&\lim_{r\rightarrow\infty}\int_{-k}^{+\infty}K e^{-\frac{\alpha}{1+\overline{u}\alpha}s}\Gamma_r(s)\Delta s\nonumber\\
&=&\int_{-k}^{+\infty}\lim_{r\rightarrow\infty}(K e^{-\frac{\alpha}{1+\overline{u}\alpha}s}\Gamma_r(s))\Delta s=0.
\end{eqnarray}}
By using Lemma \ref{lem23}, Lemma \ref{lem41}  and in view of Definition \ref{def41}, we can obtain
{\setlength\arraycolsep{2pt}
\begin{eqnarray*}
&&\lim_{r\rightarrow\infty}\frac{1}{u(Q_r)}\int_{Q_r}\|H(t)\|u(t)\Delta t\\
&=&\lim_{r\rightarrow\infty}\frac{1}{u(Q_r)}
\int_{Q_r}\big\|\int_t^{+\infty}X(t)(I-P)X^{-1}(\sigma(s))F(s)\Delta s\big\|u(t)\Delta t\\
&\leq&\lim_{r\rightarrow\infty}\frac{1}{u(Q_r)}\int_{Q_r}\big(\int_t^{+\infty}\|X(t)(I-P)X^{-1}(\sigma(s))\|\|F(s)\|\Delta s\big)u(t)\Delta t\\
&\leq&\lim_{r\rightarrow\infty}\frac{1}{u(Q_r)}\int_{Q_r}\big(K e_{\ominus\alpha}(\sigma(s),t)\|F(s)\|\Delta s\big)u(t)\Delta t\\
&\leq&\lim_{r\rightarrow\infty}\frac{1}{u(Q_r)}\int_{Q_r}\big(\int_t^{+\infty}K e^{-\frac{\alpha}{1+\overline{u}\alpha}
(\sigma(s)-t)}\|F(s)\|\Delta s\big)u(t)\Delta t\\
&\leq&\lim_{r\rightarrow\infty}\frac{1}{u(Q_r)}\int_{Q_r}\big(\int_t^{+\infty}K e^{-\frac{\alpha}{1+\overline{u}\alpha}
(s-t)}\|F(s)\|\Delta s\big)u(t)\Delta t\\
&=&\lim_{r\rightarrow\infty}\frac{1}{u(Q_r)}
\int_{Q_r}\big(\int_0^{+\infty}K e^{-\frac{\alpha}{1+\overline{u}\alpha}s}\|F(s+t)\|\Delta s\big)u(t)\Delta t\\
&=&\lim_{r\rightarrow\infty}\int_0^{+\infty}K e^{-\frac{\alpha}{1+\overline{u}\alpha}s}
\big(\frac{1}{u(Q_r)}\int_{Q_r}\|F(s+t\|u(t)\Delta t\big)\Delta s.
\end{eqnarray*}}
Let
\[
T_r(s)=\frac{1}{u(Q_r)}\int_{Q_r}\|F(s+t)\|u(t)\Delta t.
\]
It is easy to see that $T_r(s)$ is bounded. By using Corollary \ref{cl22}, we see that $T_r(s)$ is $\Delta$-measurable and by using
Theorem \ref{thm31}, we have $\lim\limits_{r\rightarrow\infty}T_r(s)=0$. Consequently, by Theorem \ref{thm21}, we get
{\setlength\arraycolsep{2pt}
\begin{eqnarray} \label{d4}
\lim_{r\rightarrow\infty}\frac{1}{u(Q_r)}\int_{Q_r}\|H(t)\|u(t)\Delta t&=&\lim_{r\rightarrow\infty}
\int_0^{+\infty}K e^{-\frac{\alpha}{1+\overline{u}\alpha}s}T_r(s)\Delta s\nonumber\\
&=&\int_0^{+\infty}\lim_{r\rightarrow\infty}(K e^{-\frac{\alpha}{1+\overline{u}\alpha}s}T_r(s))\Delta s=0.
\end{eqnarray}}
From \eqref{d3} and \eqref{d4}, we have
\[
\lim_{r\rightarrow\infty}\frac{1}{u(Q_r)}\int_{Q_r}\|x(t)\|u(t)\Delta t\leq\lim_{r\rightarrow\infty}
\frac{1}{u(Q_r)}\int_{Q_r}(\|I(t)\|+\|H(t)\|)u(t)\Delta t=0,
\]
which implies that $x(t)\in PAP_0(\mathbb{T},\mathbb{R}^n,u)$. The proof is complete.
\end{proof}
\begin{theorem}\label{thm42}
Let $u\in\mathbb{U}_{\infty}^{Inv}$. Suppose that $A(t)$ is almost periodic and (4.2) admits an exponential dichotomy. Then for every
$F\in PAP(\mathbb{T},\mathbb{R}^n,u)$, (4.1) has a unique bounded solution $x_{F}\in PAP(\mathbb{T},\mathbb{R}^n,u)$.
\end{theorem}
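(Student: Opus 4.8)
The plan is to reduce \eqref{d1} with weighted pseudo-almost periodic forcing to the two cases already understood, using the unique decomposition furnished by Theorem \ref{thm32}. Since $u\in\mathbb{U}_{\infty}^{Inv}$ and $F\in PAP(\mathbb{T},\mathbb{R}^n,u)$, Theorem \ref{thm32} lets me write $F=F_1+F_2$ with $F_1\in AP(\mathbb{T},\mathbb{R}^n)$ and $F_2\in PAP_0(\mathbb{T},\mathbb{R}^n,u)$, and this decomposition is unique. Because \eqref{d1} is linear in the forcing term, I would solve the two forced problems $x^{\Delta}=A(t)x+F_1(t)$ and $x^{\Delta}=A(t)x+F_2(t)$ separately and superpose the solutions.

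For the $PAP_0$ component $F_2$, Theorem \ref{thm41} applies verbatim and yields a unique bounded solution $x_2\in PAP_0(\mathbb{T},\mathbb{R}^n,u)$, represented by the same Green-type integral built from the dichotomy projection $P$ and fundamental matrix $X(t)$. For the almost periodic component $F_1$, I would invoke the classical existence result for almost periodic solutions of linear dynamic equations admitting an exponential dichotomy, namely that when $A(t)$ is almost periodic and \eqref{d2} admits an exponential dichotomy, the equation with almost periodic forcing $F_1$ possesses a unique bounded solution $x_1$ that is itself almost periodic, $x_1\in AP(\mathbb{T},\mathbb{R}^n)$; this is exactly the content underlying Theorem 5.1 of \cite{23}, whose representation formula is already used in the proof of Theorem \ref{thm41}.

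Setting $x_F=x_1+x_2$, linearity shows that $x_F$ solves \eqref{d1}, and since $x_1\in AP(\mathbb{T},\mathbb{R}^n)$ while $x_2\in PAP_0(\mathbb{T},\mathbb{R}^n,u)$, Theorem \ref{thm32} gives $x_F\in PAP(\mathbb{T},\mathbb{R}^n,u)$. For uniqueness I would argue that the difference of any two bounded solutions of \eqref{d1} solves the homogeneous equation \eqref{d2}; under the exponential dichotomy the only bounded solution of \eqref{d2} is the trivial one, since letting $t\to+\infty$ and $t\to-\infty$ in the two dichotomy estimates of Definition \ref{def41}, together with the decay bound of Lemma \ref{lem41}, forces it to vanish. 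Hence the bounded solution of \eqref{d1} is unique and coincides with $x_F$.

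The step demanding the most care is verifying that $x_1$ is genuinely almost periodic rather than merely bounded. This is the classical ingredient, and on time scales it rests on the exponential decay supplied by Lemma \ref{lem41} combined with the translation behavior of $A$ and $F_1$. I would either cite it directly from \cite{23} or reprove it by estimating $\|x_1(t+\tau)-x_1(t)\|$ over a common set of almost periods $\tau\in\Pi$ of $A$ and $F_1$, using Lemma \ref{lem23} to handle the identity $\sigma(t+\tau)=\sigma(t)+\tau$ inside the integral representation and the uniform exponential bound to control the tails of the integrals.
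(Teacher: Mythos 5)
Your proposal is correct and follows essentially the same route as the paper: decompose $F=G+H$ into almost periodic and $PAP_0$ parts, handle the $PAP_0$ part via Theorem \ref{thm41}, handle the almost periodic part via the known result for linear dynamic equations with exponential dichotomy and almost periodic forcing (Theorem 4.1 of \cite{19}, equivalently the content you attribute to \cite{23}), and superpose using the Green-type representation. The only difference is cosmetic: the paper splits the single integral representation of $x_F$ by linearity rather than solving two forced equations separately, and it leaves the uniqueness of the bounded solution implicit in that representation, whereas you spell out the standard dichotomy argument; both are fine.
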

\begin{proof}
Since $F\in PAP(\mathbb{T},\mathbb{R}^n,u)$, $F=G+H$ where $G\in AP(\mathbb{T},\mathbb{R}^n)$ and $H\in PAP_0(\mathbb{T},\mathbb{R}^n,u)$.
According to the proof of Theorem \ref{thm41}, the function
{\setlength\arraycolsep{2pt}
\begin{eqnarray*}
x_{F}&=&\int^t_{-\infty}X(t)PX^{-1}(\sigma(s))F(s)\Delta s-\int_t^{+\infty}X(t)(I-P)X^{-1}(\sigma(s))F(s)\Delta s\\
&=&\bigg(\int^t_{-\infty}X(t)PX^{-1}(\sigma(s))G(s)\Delta s-\int_t^{+\infty}X(t)(I-P)X^{-1}(\sigma(s))G(s)\Delta s\bigg)\\
&&+\bigg(\int^t_{-\infty}X(t)PX^{-1}(\sigma(s))H(s)\Delta s-\int_t^{+\infty}X(t)(I-P)X^{-1}(\sigma(s))H(s)\Delta s\bigg)\\
&:=& x_{G}+x_{H}
\end{eqnarray*}}
is the unique solution of \eqref{d1}, where
\[
x_{G}:=\int^t_{-\infty}X(t)PX^{-1}(\sigma(s))G(s)\Delta s-\int_t^{+\infty}X(t)(I-P)X^{-1}(\sigma(s))G(s)\Delta s,
\]
\[
x_{H}:=\int^t_{-\infty}X(t)PX^{-1}(\sigma(s))H(s)\Delta s-\int_t^{+\infty}X(t)(I-P)X^{-1}(\sigma(s))H(s)\Delta s.
\]
By Theorem 4.1 in \cite{19}, $x_{G}\in AP(\mathbb{T},\mathbb{R}^n)$. By Theorem \ref{thm41}, $x_{H}\in PAP_0(\mathbb{T},\mathbb{R}^n,u)$.
Therefore, $x_{F}\in PAP(\mathbb{T},\mathbb{R}^n,u)$. This completes the proof.
\end{proof}

\section{Existence of weighted pseudo-almost periodic solutions of cellular neural networks on time scales}
\setcounter{equation}{0} \vspace{1ex} \indent

In this section, we will use Theorem \ref{thm42} to study the existence of weighted pseudo-almost periodic solutions of system \eqref{e1}.

\begin{theorem}\label{thm51}
 Let $u\in\mathbb{U}_{\infty}^{Inv}$. Assume that $(H_1)$-$(H_4)$ and
\begin{itemize}
\item [$(H_5)$] $\min_{1\leq i\leq n}\{\Pi_i\}<\min_{1\leq i\leq n}\{\underline{c_i}\}$ and
there exists a constant $r_0$ such that
$\max_{1\leq i\leq n}\big\{\frac{\eta_i}{\underline{c_i}}\big\}+L\leq r_0$,
where
\[
\eta_i=\sum^n_{j=1}(\overline{a_{ij}}+\overline{b_{ij}})(|f_j(0)|+\alpha_{j}r_0),\quad \Pi_i=\sum^n_{j=1}(\overline{a_{ij}}+\overline{b_{ij}})\alpha_j,
\quad L=\max_{1\leq i\leq
n}\big\{\frac{\overline{I_i}}{\underline{c_i}}\big\},
\]
\[\underline{c_i}=\inf_{t\in\mathbb{T}}
c_i(t),\quad
\overline{c_i}=\sup_{t\in\mathbb{T}}c_i(t),\quad
\overline{I_i}=\sup_{t\in\mathbb{T}}|I_i(t)|,\quad i=1,2,\ldots,n
\]
\end{itemize}
hold, then system \eqref{e1} has a unique weighted pseudo-almost periodic solution in the
region
\[
E=\{\varphi\in PAP(\mathbb{T},\mathbb{R}^n,u):\|\varphi\|_{\infty}\leq r_0\}.
\]
\end{theorem}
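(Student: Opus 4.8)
The plan is to recast system \eqref{e1} as a fixed-point problem in the Banach space $(PAP(\mathbb{T},\mathbb{R}^n,u),\|\cdot\|_{\infty})$ and to apply Banach's contraction mapping principle on the closed ball $E$. First I would rewrite each scalar equation in the form $x_i^{\Delta}(t)=-c_i(t)x_i(t)+g_i(t)$, where
\[
g_i(t)=\sum_{j=1}^n a_{ij}(t)f_j(x_j(t))+\sum_{j=1}^n b_{ij}(t)f_j(x_j(t-\gamma_{ij}))+I_i(t).
\]
By $(H_3)$, the homogeneous system $x^{\Delta}=\mathrm{diag}(-c_1,\dots,-c_n)x$ admits an exponential dichotomy via Lemma \ref{lem42}, so for any $\varphi\in E$ I can invoke Theorem \ref{thm42} to conclude that the corresponding linear equation has a unique bounded solution lying in $PAP(\mathbb{T},\mathbb{R}^n,u)$, provided the forcing term $g_i$ is itself weighted pseudo-almost periodic. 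This defines a self-map whose fixed points are exactly the weighted pseudo-almost periodic solutions of \eqref{e1}.

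The key preliminary step is to verify that $g=(g_1,\dots,g_n)^T$ maps $PAP(\mathbb{T},\mathbb{R}^n,u)$ into itself. For this I would use $(H_1)$ together with Lemma \ref{le51}: since each $f_j$ is Lipschitz and $\gamma_{ij}\in\Pi$ by $(H_3)$, the compositions $t\mapsto f_j(x_j(t))$ and $t\mapsto f_j(x_j(t-\gamma_{ij}))$ belong to $PAP(\mathbb{T},\mathbb{R},u)$ whenever $x_j\in PAP(\mathbb{T},\mathbb{R},u)$. By $(H_2)$ the coefficients $a_{ij},b_{ij}$ are almost periodic, so Lemma \ref{lem31} (products of $PAP$ with $AP$ stay in $PAP$, and sums stay in $PAP$) gives $g_i\in PAP(\mathbb{T},\mathbb{R},u)$; finally $I_i\in PAP$ by $(H_4)$. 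Thus the operator $\Phi$ sending $\varphi$ to the unique bounded $PAP$ solution $x_\varphi$ of the associated linear equation is well defined on $E$.

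Next I would establish that $\Phi$ maps $E$ into $E$, i.e. $\|\Phi\varphi\|_{\infty}\le r_0$. Using the explicit integral representation from Theorem \ref{thm41}, the dichotomy bound $\|X(t)PX^{-1}(\sigma(s))\|_0\le k e_{\ominus\alpha}(t,\sigma(s))$, and the elementary estimate $\int_{-\infty}^t e_{\ominus\alpha}(t,\sigma(s))\Delta s\le 1/\underline{c_i}$ that the exponential dichotomy furnishes for the scalar case, each component is bounded by $\eta_i/\underline{c_i}+\overline{I_i}/\underline{c_i}$, where $\eta_i=\sum_{j=1}^n(\overline{a_{ij}}+\overline{b_{ij}})(|f_j(0)|+\alpha_j r_0)$ arises from the Lipschitz bound $|f_j(x)|\le|f_j(0)|+\alpha_j|x|$ on $\|x\|_\infty\le r_0$. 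The hypothesis $\max_i\{\eta_i/\underline{c_i}\}+L\le r_0$ in $(H_5)$ then delivers $\|\Phi\varphi\|_{\infty}\le r_0$, so $\Phi(E)\subset E$.

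For the contraction estimate I would take $\varphi,\psi\in E$ and bound $\|\Phi\varphi-\Phi\psi\|_{\infty}$; after subtracting the two integral representations and applying the Lipschitz property $(H_1)$ together with the same dichotomy integral bound, the difference is controlled by $\max_i\{\Pi_i/\underline{c_i}\}\|\varphi-\psi\|_{\infty}$ with $\Pi_i=\sum_{j=1}^n(\overline{a_{ij}}+\overline{b_{ij}})\alpha_j$. The condition $\min_i\{\Pi_i\}<\min_i\{\underline{c_i}\}$ in $(H_5)$ is intended to force this constant below $1$, making $\Phi$ a contraction, whereupon Banach's fixed point theorem yields a unique fixed point in $E$, which is the desired unique weighted pseudo-almost periodic solution. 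I expect the main obstacle to be the contraction step: the hypothesis as literally stated compares minima rather than maxima, so care is needed to see that the relevant contraction constant is genuinely $\max_i\{\Pi_i/\underline{c_i}\}$ and that $(H_5)$ indeed makes it strictly less than one; getting the quantifiers and the supremum bounds $\overline{a_{ij}},\overline{b_{ij}}$ aligned correctly with the dichotomy estimate is where the delicate bookkeeping lies.
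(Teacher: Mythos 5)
Your proposal follows essentially the same route as the paper's own proof: Lemma \ref{lem42} supplies the exponential dichotomy, Lemma \ref{le51} and Lemma \ref{lem31} show the substituted forcing term lies in $PAP(\mathbb{T},\mathbb{R}^n,u)$, Theorem \ref{thm42} defines the operator $\Phi(\varphi)=x_\varphi$, and the same two integral estimates give $\Phi(E)\subset E$ and the contraction bound governed by $\max_i\{\Pi_i/\underline{c_i}\}$. Your closing caveat is also on target: the paper's proof in fact uses $\Pi_i<\underline{c_i}$ for \emph{every} $i$ (and its numerical examples verify $\max_i\{\Pi_i\}<\min_i\{\underline{c_i}\}$), so the $\min$ on the left-hand side of $(H_5)$ is evidently a misprint for $\max$, exactly as you suspected.
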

\begin{proof}
For any given $\varphi=(\varphi_1,\varphi_2,\ldots,\varphi_n)^T\in E$,
consider the following equation
\begin{equation} \label{d5}
x_i^{\Delta}(t)=-c_i(t)x_i(t)+\sum^n_{j=1}a_{ij}(t)f_j(\varphi_j(t))+\sum_{j=1}^{n}b_{ij}(t)f_j(\varphi_j(t-\gamma_{ij}))+I_i(t),\,i=1,2,\ldots,n
\end{equation}
and its associated homogeneous equation
\begin{equation} \label{d6}
x_i^{\Delta}(t)=-c_i(t)x_i(t),\,\,\,i=1,2,\ldots,n.
\end{equation}
It follows from $(H_3)$ and Lemma \ref{lem42} that \eqref{d6} admits an exponential dichotomy. By Lemma \ref{le51}, we have $$F(t):=(F_1(t),F_2(t),\ldots,F_{n}(t))^T\in PAP(\mathbb{T},\mathbb{R}^{n},u),$$
where
\[
F_i(t)=\sum^n_{j=1}a_{ij}(t)f_j(\varphi_j(t))+\sum^n_{j=1}b_{ij}(t)f_j(\varphi_j(t-\gamma_{ij}))+I_i(t),\,\,\,i=1,2,\ldots,n.
\]
By  Theorem \ref{thm42}, we know that system \eqref{d5} has exactly one weighted pseudo-almost periodic solution
\[
x_{\varphi}(t)=\int^t_{-\infty}X(t)X^{-1}(\sigma(s))F(s)\Delta s=(x_{\varphi_1}(t),\ldots,x_{\varphi_n}(t))^T,
\]
where
\[
x_{\varphi_i}(t)=\int^t_{-\infty}e_{-c_i}(t,\sigma(s))F_i(s)\Delta s,\,\,\,i=1,2,\ldots,n.
\]
Define a nonlinear operator on $E$ by
\[
\Phi(\varphi)(t)=(x_{\varphi_1}(t),\ldots,x_{\varphi_n}(t))^T,\,\,\,\forall \varphi
\in E.
\]
  For any given $\varphi\in
E$, by conditions $(H_1)-(H_5)$, we have
{\setlength\arraycolsep{2pt}
\begin{eqnarray*}
\sup_{t\in\mathbb{T}}|x_{\varphi_i}(t)|&=&\sup_{t\in\mathbb{T}}\bigg|\int^t_{-\infty}e_{-c_i}(t,\sigma(s))
\bigg(\sum\limits^n_{j=1}(a_{ij}(s)f_j(\varphi_j(s))+b_{ij}(s)f_j(\varphi_j(s-\gamma_{ij})))+I_i(s)\bigg)\Delta
s\bigg|\nonumber\\
&\leq&\sup_{t\in\mathbb{T}}\bigg\{\bigg|\int^t_{-\infty}e_{-\underline{c_i}}(t,\sigma(s))
\bigg(\sum\limits^n_{j=1}(\overline{a_{ij}}f_j(\varphi_j(s))+\overline{b_{ij}}f_j(\varphi_j(s-\gamma_{ij})))\bigg)\Delta
s\bigg|\bigg\}+\frac{\overline{I_i}}{\underline{c_i}}\nonumber\\
&\leq&\sup_{t\in\mathbb{T}}\bigg\{\bigg|\int^t_{-\infty}e_{-\underline{c_i}}(t,\sigma(s))
\bigg(\sum\limits^n_{j=1}\overline{a_{ij}}\big(|f_j(0)|+\alpha_j|\varphi_j(s)|\big)\nonumber\\
&&+\sum^n_{j=1}\overline{b_{ij}}\big(|f_j(0)|+\alpha_j|\varphi_j(s-\gamma_{ij})|\big)\bigg)\Delta
s\bigg|\bigg\}+\frac{\overline{I_i}}{\underline{c_i}}\nonumber\\
&\leq&\sup_{t\in\mathbb{T}}\bigg|\int^t_{-\infty}e_{-\underline{c_i}}(t,\sigma(s))
\bigg(\sum\limits^n_{j=1}\big[\overline{a_{ij}}\big(|f_j(0)|+\alpha_{j}r_0\big)+\overline{b_{ij}}\big(|f_j(0)|+\alpha_{j}r_0\big)\big]\bigg)\Delta
s\bigg|+\frac{\overline{I_i}}{\underline{c_i}}\nonumber\\
&\leq&\frac{\eta_i}{\overline{c_i}}+L_1\leq r_0,\,\,\,i=1,2,\ldots,n.
\end{eqnarray*}}
Hence
$
\|\Phi(\varphi)\|_{\infty}=\max_{1\leq i\leq n}\sup_{t\in\mathbb{T}}|x_{\varphi_i}(t)|
\leq r_0.
$
 Therefore, $\Phi(E)\subset E$.

 Taking $\varphi,\psi\in E$ and combining conditions $(H_1)$ and
$(H_5)$, we obtain that
{\setlength\arraycolsep{2pt}
\begin{eqnarray} \label{e9}
&&\sup_{t\in\mathbb{T}}|x_{\varphi_i}(t)-x_{\psi_i}(t)|\nonumber\\
&=&\sup_{t\in\mathbb{T}}
\bigg\{\bigg|\int^t_{-\infty}e_{-c_i}(t,\sigma(s))\bigg(\sum^n_{j=1}
a_{ij}(s)\bigg[f_j(\varphi_j(s))-
f_j(\psi_j(s))\bigg]\nonumber\\
&&+\sum^n_{j=1}b_{ij}(s)\bigg[f_j(\varphi_j(s-\gamma_{ij}))-f_j(\psi_j(s-\gamma_{ij}))\bigg]\bigg)\Delta
s\bigg|\bigg\}\nonumber\\
&\leq&\sup_{t\in\mathbb{T}}\bigg\{\bigg|\int^t_{-\infty}e_{-c_i}(t,\sigma(s))\bigg(\sum^n_{j=1}
a_{ij}(s)\alpha_j|\varphi_j(s)-
\psi_j(s)|\nonumber\\
&&+\sum^n_{j=1}b_{ij}(s)\alpha_j|\varphi_j(s-\gamma_{ij})-\psi_j(s-\gamma_{ij})|\bigg)\Delta s\bigg|\bigg\}\nonumber\\
&\leq&\sup_{t\in\mathbb{T}}\bigg\{\bigg|\int^t_{-\infty}e_{-\underline{c_i}}(t,\sigma(s))\bigg(
\sum^n_{j=1}(\overline{a_{ij}}+\overline{b_{ij}})\alpha_{j}\bigg)\Delta s
\bigg|\bigg\}\|\varphi-\psi\|_{\infty}\nonumber\\
&\leq&\frac{\Pi_i}{\underline{c_i}}\|\varphi-\psi\|_{\infty}
<\|\varphi-\psi\|_{\infty},\,i=1,2,\ldots,n.
\end{eqnarray}}
From \eqref{e9}, we obtain
\begin{equation} \label{e11}
\|\Phi(\varphi)-\Phi(\psi)\|_{\infty}=\max_{1\leq i\leq
n}\sup_{t\in\mathbb{T}}\|x_{\varphi_i}(t)-x_{\psi_i}(t)\|_{\infty}
<\|\varphi-\psi\|_{\infty}.
\end{equation}

 By \eqref{e11}, we see that $\Phi$ is a contraction
mapping from $E$ to $E$. Since $E$ is a closed subset of
$PAP(\mathbb{T},\mathbb{R}^{n},u)$, $\Phi$ has a fixed point in $E$, which means that system
\eqref{e1} has a unique weighted pseudo-almost periodic solution in the region
$
E=\{\varphi\in PAP(\mathbb{T},\mathbb{R}^{n},u):\|\varphi\|_{\infty}\leq r_0\}.
$
This completes the proof.
\end{proof}

\begin{corollary}\label{cl51}
If conditions $(H_1)$-$(H_3)$ and $(H_5)$ hold. Furthermore, assume that $I_i(i=1,2,\ldots,n)$
are almost periodic functions, then  system \eqref{e1} has a unique almost periodic solution in the region
$
E=\{\varphi\in AP(\mathbb{T},\mathbb{R}^{n}):||\varphi||_{\infty}\leq r_0\}.
$
\end{corollary}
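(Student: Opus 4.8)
The plan is to run the very same Banach fixed point argument as in the proof of Theorem \ref{thm51}, but carried out inside the Banach space $AP(\mathbb{T},\mathbb{R}^n)$ instead of $PAP(\mathbb{T},\mathbb{R}^n,u)$. The whole point of the corollary is that once the external inputs $I_i$ are assumed almost periodic (the strengthening of $(H_4)$), every object produced by the construction stays in $AP(\mathbb{T},\mathbb{R}^n)$, so the pseudo-almost periodic machinery can be discarded. First I would fix $\varphi=(\varphi_1,\ldots,\varphi_n)^T\in E=\{\varphi\in AP(\mathbb{T},\mathbb{R}^n):\|\varphi\|_{\infty}\leq r_0\}$ and attach to \eqref{d5} the forcing term $F=(F_1,\ldots,F_n)^T$ with $F_i(t)=\sum_{j=1}^n a_{ij}(t)f_j(\varphi_j(t))+\sum_{j=1}^n b_{ij}(t)f_j(\varphi_j(t-\gamma_{ij}))+I_i(t)$.

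The first substantive step, and the one I expect to be the only genuine obstacle, is to verify that $F\in AP(\mathbb{T},\mathbb{R}^n)$. This rests on three closure properties of $AP(\mathbb{T},\mathbb{R}^n)$ that are the almost periodic analogues of Lemma \ref{lem31} and of the $\Gamma_1$-part of the proof of Lemma \ref{le51}: that $f_j\circ\varphi_j\in AP(\mathbb{T},\mathbb{R})$ whenever $\varphi_j\in AP(\mathbb{T},\mathbb{R})$ and $f_j$ is Lipschitz (condition $(H_1)$); that the translate $t\mapsto\varphi_j(t-\gamma_{ij})$ is again almost periodic since $\gamma_{ij}\in\Pi$ by $(H_3)$, so that $f_j(\varphi_j(\cdot-\gamma_{ij}))\in AP(\mathbb{T},\mathbb{R})$ as well; and that finite sums and products of almost periodic functions are almost periodic, which, combined with $a_{ij},b_{ij}\in AP(\mathbb{T},\mathbb{R})$ from $(H_2)$ and $I_i\in AP(\mathbb{T},\mathbb{R})$ by hypothesis, gives $F_i\in AP(\mathbb{T},\mathbb{R})$ for each $i$. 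These are exactly the AP versions of the estimates already used in the excerpt, so I would either invoke them directly or reproduce the translation-and-Lipschitz argument of Lemma \ref{le51} with the $PAP_0$ remainder suppressed.

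Next I would note that $(H_3)$ together with Lemma \ref{lem42} makes the homogeneous system \eqref{d6} admit an exponential dichotomy, and then invoke the almost periodic counterpart of Theorem \ref{thm42}, namely Theorem 4.1 in \cite{19} (the same result the authors already cite in the proof of Theorem \ref{thm42} to place $x_G$ in $AP(\mathbb{T},\mathbb{R}^n)$): under exponential dichotomy with an almost periodic forcing term, \eqref{d5} has a unique bounded solution $x_\varphi(t)=\int_{-\infty}^t X(t)X^{-1}(\sigma(s))F(s)\Delta s$ lying in $AP(\mathbb{T},\mathbb{R}^n)$, with components $x_{\varphi_i}(t)=\int_{-\infty}^t e_{-c_i}(t,\sigma(s))F_i(s)\Delta s$. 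I would then define the operator $\Phi(\varphi)=(x_{\varphi_1},\ldots,x_{\varphi_n})^T$ on $E$.

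Finally, the self-mapping property $\Phi(E)\subset E$ and the contraction estimate $\|\Phi(\varphi)-\Phi(\psi)\|_{\infty}<\|\varphi-\psi\|_{\infty}$ follow from precisely the same sup-norm computations as in Theorem \ref{thm51}: estimating $e_{-c_i}(t,\sigma(s))$ by $e_{-\underline{c_i}}(t,\sigma(s))$, applying the Lipschitz bounds of $(H_1)$, and using $(H_5)$ in the forms $\max_{1\leq i\leq n}\{\eta_i/\underline{c_i}\}+L\leq r_0$ and $\Pi_i/\underline{c_i}<1$. Since these bounds are pointwise in $t$ and purely norm-based, they are insensitive to whether one works in the $PAP$ or the $AP$ setting, so I would simply transcribe them. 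As $E$ is a closed subset of the Banach space $AP(\mathbb{T},\mathbb{R}^n)$, Banach's fixed point theorem then yields a unique fixed point of $\Phi$ in $E$, which is the asserted unique almost periodic solution of \eqref{e1}.
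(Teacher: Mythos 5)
Your proposal is correct and is essentially the argument the paper intends: the corollary is stated without a separate proof precisely because it is the specialization of Theorem \ref{thm51}'s fixed point argument to $AP(\mathbb{T},\mathbb{R}^n)$, replacing Lemma \ref{le51} and Theorem \ref{thm42} by their almost periodic counterparts (closure of $AP$ under Lipschitz composition, translation by $\gamma_{ij}\in\Pi$, sums and products, together with Theorem 4.1 of \cite{19}), exactly as you do. Your verification that the sup-norm self-mapping and contraction estimates carry over unchanged, and that $E$ is closed in the Banach space $AP(\mathbb{T},\mathbb{R}^n)$, completes the details the paper leaves implicit.
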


\begin{corollary}\label{cl52}
If conditions $(H_1)$-$(H_3)$ and $(H_5)$ hold. Furthermore, assume that $I_i(i=1,2,\ldots,n)$
are pseudo-almost periodic functions, then  system \eqref{e1} has a unique pseudo-almost periodic solution in the region
$
E=\{\varphi\in PAP(\mathbb{T},\mathbb{R}^{n}):||\varphi||_{\infty}\leq r_0\}.
$
\end{corollary}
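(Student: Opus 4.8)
The plan is to obtain Corollary \ref{cl52} as the special case of Theorem \ref{thm51} in which the weight is taken to be the constant function $u\equiv 1$. First I would check that $u\equiv 1$ is an admissible weight, i.e. that $u\in\mathbb{U}_{\infty}^{Inv}$. Since $\inf_{t\in\mathbb{T}}u(t)=1>0$ and, because the antiderivative of the constant function $1$ on a time scale is the identity, $u(Q_r)=\int_{Q_r}1\,\Delta t=(\overline{t}+r)-(\overline{t}-r)=2r\to\infty$ as $r\to\infty$, we have $u\in\mathbb{U}_{\infty}$. Moreover, for every $s\in\Pi$ one has $\overline{\lim\limits_{|t|\to\infty}}\frac{u(t+s)}{u(t)}=1<\infty$, so $u\equiv 1$ indeed lies in $\mathbb{U}_{\infty}^{Inv}$.

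Next I would identify the weighted spaces for this choice of weight with the unweighted pseudo-almost periodic spaces. With $u\equiv 1$ the defining condition in Definition \ref{def31} becomes $\lim_{r\to\infty}\frac{1}{2r}\int_{Q_r}\|\varphi(t)\|\,\Delta t=0$, which is exactly the condition defining $PAP_0^*(\mathbb{T},\mathbb{R}^n)$ in Definition \ref{def25}. Hence $PAP_0(\mathbb{T},\mathbb{R}^n,1)=PAP_0^*(\mathbb{T},\mathbb{R}^n)$ and consequently $PAP(\mathbb{T},\mathbb{R}^n,1)$ coincides with the space of pseudo-almost periodic functions $PAP(\mathbb{T},\mathbb{R}^n)$.

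With these identifications the hypotheses of Theorem \ref{thm51} are met: conditions $(H_1)$--$(H_3)$ and $(H_5)$ are assumed outright, and the assumption that each $I_i$ is pseudo-almost periodic is precisely condition $(H_4)$ for the weight $u\equiv 1$. Applying Theorem \ref{thm51} with this weight then yields a unique weighted pseudo-almost periodic solution of \eqref{e1} with weight $1$ in the set $E$, which by the identification above is exactly a unique pseudo-almost periodic solution in $E=\{\varphi\in PAP(\mathbb{T},\mathbb{R}^n):\|\varphi\|_{\infty}\leq r_0\}$. The only point requiring care is the verification that $u\equiv 1\in\mathbb{U}_{\infty}^{Inv}$ together with the matching of the two $PAP_0$ definitions; once these are in place the conclusion is immediate and no fixed-point argument needs to be repeated.
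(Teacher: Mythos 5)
Your proposal is correct and is exactly the intended argument: the paper states Corollary \ref{cl52} without proof as an immediate specialization of Theorem \ref{thm51}, the point being that for the constant weight $u\equiv 1$ one has $u\in\mathbb{U}_{\infty}^{Inv}$ and $PAP(\mathbb{T},\mathbb{R}^n,1)$ coincides with the space of pseudo-almost periodic functions of Definition \ref{def25}. Your verification of $u\equiv 1\in\mathbb{U}_{\infty}^{Inv}$ and of the identification of the spaces simply makes explicit what the paper leaves implicit.
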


\section{Exponential stability of the weighted pseudo-almost periodic solution of cellular neural networks on time scales}
\setcounter{equation}{0} \vspace{1ex} \indent

In this section, we derive sufficient conditions for the exponential stability of  weighted pseudo-almost periodic solutions of \eqref{e1}.

\begin{theorem}\label{thm61}
Suppose that $(H_1)$-$(H_5)$ hold, then system \eqref{e1} has a unique weighted pseudo-almost
periodic solution which is globally exponential stable.
\end{theorem}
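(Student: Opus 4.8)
The plan is to split the proof into two parts. Theorem \ref{thm51} already gives existence and uniqueness of a weighted pseudo-almost periodic solution $x^*=(x^*_1,\dots,x^*_n)^T$ of \eqref{e1} in the region $E$, so what remains is to establish that this solution is globally exponentially stable in the sense of Definition 2.6. First I would take an arbitrary solution $x(t)=(x_1(t),\dots,x_n(t))^T$ of \eqref{e1} with initial value $\varphi$, set $y_i(t)=x_i(t)-x^*_i(t)$, and write down the variational dynamic equation satisfied by $y_i$, namely
\[
y_i^{\Delta}(t)=-c_i(t)y_i(t)+\sum_{j=1}^n a_{ij}(t)\big(f_j(x_j(t))-f_j(x^*_j(t))\big)+\sum_{j=1}^n b_{ij}(t)\big(f_j(x_j(t-\gamma_{ij}))-f_j(x^*_j(t-\gamma_{ij}))\big).
\]
Using the variation-of-constants formula on time scales together with $(H_3)$ (so that $-c_i\in\mathcal{R}^+$ and the exponentials $e_{-c_i}$ are well behaved), I would represent $y_i(t)$ as $y_i(t)=e_{-c_i}(t,t_0)y_i(t_0)+\int_{t_0}^t e_{-c_i}(t,\sigma(s))\big(\text{nonlinear terms}\big)\Delta s$.

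Next I would introduce the candidate Lyapunov exponent. Condition $(H_5)$ gives $\min_i\Pi_i<\min_i\underline{c_i}$, which guarantees the existence of a constant $\lambda>0$ with $\ominus\lambda\in\mathcal{R}^+$ satisfying, for each $i$, a strict inequality of the form
\[
\lambda-\underline{c_i}+\sum_{j=1}^n\big(\overline{a_{ij}}+\overline{b_{ij}}e_{\lambda}(s,s-\gamma_{ij})\big)\alpha_j<0,
\]
obtained by a continuity argument: define $\Theta_i(\lambda)$ as the left-hand side, note $\Theta_i(0)=\underline{c_i}^{-1}\Pi_i\cdot\underline{c_i}-\underline{c_i}<0$ under $(H_5)$ (more precisely $\Theta_i(0)<0$ because $\Pi_i<\underline{c_i}$), observe $\Theta_i$ is continuous and $\Theta_i(\lambda)\to+\infty$ as $\lambda$ grows, and choose $\lambda$ small enough that all $n$ inequalities hold simultaneously. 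Then I would fix $M>1$ larger than $\max_i\{\underline{c_i}/(\text{the bracketed exponential sum at }\lambda=0)\}$ or whatever normalization makes the base case hold, and set up the estimate $\|y(t)\|\le M e_{\ominus\lambda}(t,t_0)\|\psi\|_\infty$.

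The core of the argument is then a standard time-scale $\Delta$-differential-inequality / contradiction scheme. I would consider the auxiliary function $Z_i(t)=e_{\lambda}(t,t_0)|y_i(t)|$ (or work directly with $\|y(t)\|e_\lambda(t,t_0)$), and show that if the desired bound ever failed there would be a first crossing time $t_1$ where $\|y(t_1)\|e_\lambda(t_1,t_0)=M\|\psi\|_\infty$ while the bound holds on $[t_0,t_1]_{\mathbb T}$ (and on the initial interval $[-\gamma,0]_{\mathbb T}$ by the choice of $M>1$). Evaluating the integral representation of $y_i(t_1)$, applying $(H_1)$, the bound $\|X(t)PX^{-1}(\sigma(s))\|\le k e_{\ominus\alpha}(t,\sigma(s))$-type estimate from the exponential dichotomy, Lemma \ref{lem41} to pass to ordinary exponentials, and the delay estimate $|y_j(s-\gamma_{ij})|\le M e_{\ominus\lambda}(s-\gamma_{ij},t_0)\|\psi\|_\infty\le M e_{\lambda}(s,s-\gamma_{ij})e_{\ominus\lambda}(s,t_0)\|\psi\|_\infty$, I would derive $\|y(t_1)\|e_{\lambda}(t_1,t_0)<M\|\psi\|_\infty$, contradicting the definition of $t_1$. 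Hence the inequality $\|x(t)-x^*(t)\|\le M e_{\ominus\lambda}(t,t_0)\|\psi\|_\infty$ holds for all $t\in(0,+\infty)_{\mathbb T}$, which is exactly global exponential stability.

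The main obstacle I expect is the delay bookkeeping on the time scale: controlling the shifted term $e_{\ominus\lambda}(s-\gamma_{ij},t_0)$ in terms of $e_{\ominus\lambda}(s,t_0)$ requires the semigroup property of the generalized exponential (Lemma 2.1) together with $\gamma_{ij}\in\Pi$ from $(H_3)$ so that the shift stays inside $\mathbb T$ and Lemma \ref{lem23} applies; getting the constant $e_{\lambda}(s,s-\gamma_{ij})$ to be uniformly bounded (using $\overline\mu<\infty$ implicitly via Lemma \ref{lem41}) and absorbing it into the inequality defining $\lambda$ is the delicate calculation. Everything else — the variation-of-constants representation, the Lipschitz estimates from $(H_1)$, and the contradiction at the first crossing time — is routine once $\lambda$ and $M$ are chosen consistently with $(H_5)$.
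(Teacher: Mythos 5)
Your proposal is correct and takes essentially the same route as the paper's own proof: existence and uniqueness from Theorem \ref{thm51}, the variational equation for $u_i=x_i-x_i^*$ with variation of constants, a continuity argument on the functions $H_i(\epsilon)$ (your $\Theta_i$) derived from $(H_5)$ to select $\lambda>0$ and $M=\max_{1\leq i\leq n}\{\underline{c_i}/\Pi_i\}>1$, and a first-crossing-time contradiction with the delay bookkeeping handled via $e_{\lambda}(s,s-\gamma_{ij})\leq\exp\big(\lambda(\gamma+\sup_{s\in\mathbb{T}}\mu(s))\big)$ using $\gamma_{ij}\in\Pi$. The only cosmetic differences are that the paper runs the contradiction with an auxiliary factor $pM$ for arbitrary $p>1$ and then lets $p\rightarrow 1$, and that it never needs the dichotomy constants $k,\alpha$ you invoke (only the explicit scalar exponentials $e_{-c_i}(t,\sigma(s))\leq e_{-\underline{c_i}}(t,\sigma(s))$), neither of which changes the argument.
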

\begin{proof}
According to Theorem \ref{thm31}, for $u\in\mathbb{U}_{\infty}^{Inv}$, we know that \eqref{e1} has a weighted pseudo-almost periodic
solution
$x^*(t)=\big(x_1^*(t),x_2^*(t),\ldots,x_n^*(t)\big)^{T}$ with the initial value
$\varphi^*(t)=(\varphi^*_1(t),\varphi^*_2(t),\ldots,\varphi^*_n(t))^T$.
Suppose that
$x(t)=\big(x_1(t),x_2(t),\ldots,x_n(t)\big)^{T}$
is an arbitrary solution of \eqref{e1}  with the initial value
$\varphi(t)=(\varphi_1(t),\varphi_2(t),\ldots,\varphi_n(t))^T$. Then it follows from system \eqref{e1}
that
{\setlength\arraycolsep{2pt}
\begin{eqnarray} \label{e12}
&&u_i^\Delta(s)+c_i(s)u_i(s)\nonumber\\
&=&\sum^n_{j=1}a_{ij}(s)\bigg[f_j(u_j(s)
+y_j^*(s))-f_j(y_j^*(s))\bigg]\nonumber\\
&&+\sum^n_{j=1}b_{ij}(s)\bigg[f_j(u_j(s-\gamma_{ij})+y_j^*(s-\gamma_{ij}))-f_j(y_j^*(s-\gamma_{ij}))\bigg],
\end{eqnarray}}
where $u_i(s)=x_i(s)-x_i^*(s)$ and
$i=1,2,\ldots,n$, the initial condition of \eqref{e12} are
\[
\psi_i(s)=\varphi_i(s)-\varphi_i^*(s),\,\,\,s\in[-\gamma,0]_{\mathbb{T}},\,\,\,i=1,2,\ldots,n.
\]
 Let $H_i$ and $\overline{H_j}$ be defined by
\[
H_i(\epsilon)=\underline{c_i}-\epsilon-\sum^n_{j=1}\alpha_j\big(\overline{a_{ij}}\exp(\epsilon\sup_{s\in\mathbb{T}}\mu(s))+
\overline{b_{ij}}\exp\big(\epsilon(\gamma
+\sup_{s\in\mathbb{T}}\mu(s))\big)\big)
,
\]
where $i=1,2,\ldots,n,\epsilon\in[0,+\infty)$. By $(H_5)$, we
get
\[
H_i(0)=\underline{a_i}-\sum^n_{j=1}(\overline{a_{ij}}+\overline{b_{ij}})\alpha_j=\underline{c_i}-\Pi_i>0,\quad i=1,2,\ldots,n.
\]
Since $H_i, i=1,2,\ldots,n$ are continuous on $[0,+\infty)$ and
$H_i(\epsilon)\rightarrow-\infty$
as $\epsilon\rightarrow+\infty$, there exist
$\epsilon_i>0$ such that
$H_i(\epsilon_i)=0$ and
$H_i(\epsilon)>0$ for $\epsilon\in(0,\epsilon_i)$. By choosing
$\varepsilon=\min\limits_{1\leq i\leq n}\{\epsilon_i\}$,
we have
$
H_i(\varepsilon)\geq 0,  i=1,2,\ldots,n.
$
So, we can choose a positive constant
$0<\lambda<\min\big\{\varepsilon,\min\limits_{1\leq i\leq
n}\{\underline{c_i}\}\big\}$ such that
$
H_i(\lambda)>0,  i=1,2,\ldots,n,
$
which implies that
\begin{equation} \label{e14}
\frac{1}{\underline{c_i}-\lambda}\bigg[\sum^n_{j=1}\overline{a_{ij}}\alpha_j\exp(\lambda\sup_{s\in\mathbb{T}}\mu(s))+
\sum^n_{j=1}\overline{b_{ij}}\alpha_j\exp\big(\lambda(\gamma+
\sup_{s\in\mathbb{T}}\mu(s))\big)\bigg]<1,\,\,\,i=1,2,\ldots,n.
\end{equation}
 Multiplying \eqref{e12} by $e_{-c_i}(t_0,\sigma(s))$ and integrating
on $[t_0,t]_{\mathbb{T}}$, for $i=1,2,\ldots,n$, we obtain
{\setlength\arraycolsep{2pt}
\begin{eqnarray} \label{e16}
u_i(t)&=&u_i(t_0)e_{-c_i}(t,t_0)+\int^t_{t_0}e_{-c_i}(t,\sigma(s))\bigg(\sum^n_{j=1}a_{ij}(s)\bigg[f_j(u_j(s)+y_j^*(s))-f_j(y_j^*(s))\bigg]\nonumber\\
&&+\sum^n_{j=1}b_{ij}(s)\bigg[f_j(u_j(s-\gamma_{i j})
+y_j^*(s-\gamma_{i j}))-f_j(y_j^*(s-\gamma_{i j})) \bigg]\bigg)\Delta
s.
\end{eqnarray}}
Let $M=\max\limits_{1\leq i\leq n}\big\{\frac{\underline{c_i}}{\sum\limits^n_{j=1}(\overline{a_{ij}}+\overline{b_{ij}})\alpha_j}\big\}
$, by $(H_5)$ we have $M>1$. Thus
\[
\frac{1}{M}-\frac{1}{\underline{c_i}-\lambda}\bigg[\sum^n_{j=1}\overline{a_{ij}}\alpha_j\exp(\lambda\sup_{s\in\mathbb{T}}\mu(s))
+\sum\limits^n_{j=1}\overline{b_{ij}}\alpha_j
\exp\big(\lambda(\gamma+\sup_{s\in\mathbb{T}}\mu(s))\big)\bigg]\leq
0.
\]
 It is easy to see that
\[
|u_i(t)|=|\psi_i(t)|\leq\|\psi\|_{\infty}\leq M
e_{\ominus\lambda}(t,t_0)\|\psi\|_{\infty},\,\,\,t\in
[-v,0]_{\mathbb{T}},\,\,\,i=1,2,\ldots,n,
\]
where $\lambda\in \mathcal{R}^+$ is the same as that in \eqref{e14}, which implies that
\[
\|x(t)-x^*(t)\|=\max_{1\leq i\leq n}\big\{|u_i(t)|\big\}\leq M
e_{\ominus\lambda}(t,t_0)\|\psi\|_{\infty},\,\,\,t\in
[-v,0]_{\mathbb{T}}.
\]
Next, we claim that
\begin{equation} \label{e18}
\|x(t)-x^*(t)\|\leq M
e_{\ominus\lambda}(t,t_0)\|\psi\|_{\infty},\,\,\,\forall
t\in(0,+\infty)_{\mathbb{T}}.
\end{equation}
In order to prove \eqref{e18}, we first show for any $p>1$, the following inequality holds
\begin{equation} \label{e19}
\|x(t)-x^*(t)\|< pM
e_{\ominus\lambda}(t,t_0)\|\psi\|_{\infty},\,\,\,\forall
t\in(0,+\infty)_{\mathbb{T}}.
\end{equation}
If \eqref{e19} is not true, then there must be some
$t_1\in(0,+\infty)_{\mathbb{T}},\,C>1$ and some $k$ such that
\begin{equation} \label{e20}
\|x(t_1)-x^*(t_1)\|=|x_k(t_1)-x_k^*(t_1)|=CpM
e_{\ominus\lambda}(t_1,t_0)\|\psi\|_{\infty}
\end{equation}
and
\begin{equation} \label{e21}
\|x(t)-x^*(t)\|\leq CpM
e_{\ominus\lambda}(t,t_0)\|\psi\|_{\infty},\quad\forall
t\in[-v,t_1]_{\mathbb{T}}.
\end{equation}
 By \eqref{e16}-\eqref{e21} and $(H_2)$-$(H_5)$, we obtain
{\setlength\arraycolsep{2pt}
\begin{eqnarray} \label{e22}
|u_i(t_1)|&\leq&
e_{-c_i}(t_1,t_0)\|\psi\|_{\infty}+\int_{t_0}^{t_1}CpM\|\psi\|_{\infty}
e_{-c_i}(t_1,\sigma(s))
\bigg(\sum^n_{j=1}\overline{a_{ij}}\alpha_{j}e_{\ominus\lambda}(s,t_0)\nonumber\\
&&+\sum^n_{j=1}\overline{b_{ij}}\alpha_{j}e_{\ominus\lambda}(s-\gamma_{ij},t_0)
\bigg)\Delta
s\nonumber\\
&\leq& CpM
e_{\ominus\lambda}(t_1,t_0)\|\psi\|_{\infty}\bigg\{\frac{1}{CpM}e_{-c_i}(t_1,t_0)
e_{\ominus\lambda}(t_0,t_1)+\int_{t_0}^{t_1}e_{-c_i}(t_1,\sigma(s))e_{\lambda}(t_1,\sigma(s)))\nonumber\\
&&\times\bigg(\sum^n_{j=1}\overline{a_{ij}}\alpha_{j}e_{\ominus\lambda}(s,\sigma(s))+\sum^n_{j=1}\overline{b_{ij}}\alpha_{j}e_{\ominus\lambda}(s-\gamma,\sigma(s))\bigg)\Delta
s\bigg\}\nonumber\\
&<& CpM e_{\ominus\lambda}(t_1,t_0)\|\psi\|_{\infty}
\bigg\{\frac{1}{M}e_{-c_i\oplus\lambda}(t_1,t_0)+\bigg(\sum^n_{j=1}\overline{a_{ij}}\alpha_{j}\exp(\lambda\sup_{s\in\mathbb{T}}\mu(s))\nonumber\\
&&+\sum\limits^n_{j=1}\overline{b_{ij}}\alpha_j\exp\big(\lambda(\gamma+
\sup_{s\in\mathbb{T}}\mu(s))\big)\bigg)\int_{t_0}^{t_1}e_{-c_i\oplus\lambda}(t_1,\sigma(s))\Delta
s\bigg\}\nonumber\\
&\leq& CpM e_{\ominus\lambda}(t_1,t_0)\|\psi\|_{\infty}
\bigg\{\frac{1}{M}e_{-c_i\oplus\lambda}(t_1,t_0)+\bigg(\sum^n_{j=1}\overline{a_{ij}}\alpha_j\exp(\lambda\sup_{s\in\mathbb{T}}\mu(s))\nonumber\\
&&+\sum\limits^n_{j=1}\overline{b_{ij}}\alpha_j\exp\big(\lambda(\gamma+
\sup_{s\in\mathbb{T}}\mu(s))\big)\bigg)\frac{1-e_{-c_i\oplus\lambda}(t_1,t_0)}{\underline{c_i}-\lambda}\bigg\}\nonumber\\
&\leq& CpM e_{\ominus\lambda}(t_1,t_0)\|\psi\|_{\infty}
\bigg\{\bigg[\frac{1}{M}-\frac{1}{\underline{c_i}-\lambda}
\bigg(\sum^n_{j=1}\overline{a_{ij}}\alpha_j\exp(\lambda\sup_{s\in\mathbb{T}}\mu(s))\nonumber\\
&&+\sum\limits^n_{j=1}\overline{b_{ij}}\alpha_j\exp\big(\lambda(\gamma+
\sup_{s\in\mathbb{T}}\mu(s))\big)\bigg)\bigg]e_{-c_i\oplus\lambda}(t_1,t_0)\nonumber\\
&&+\frac{1}{\underline{c_i}-\lambda}\bigg(\sum^n_{j=1}\overline{a_{ij}}\alpha_j\exp(\lambda\sup_{s\in\mathbb{T}}\mu(s))
+\sum\limits^n_{j=1}\overline{b_{ij}}\alpha_j\exp\big(\lambda(\gamma+
\sup_{s\in\mathbb{T}}\mu(s))\big)\bigg)\bigg\}\nonumber\\
 &<& CpM e_{\ominus\lambda}(t_1,t_0)\|\psi\|_{\infty}.
\end{eqnarray}}
\eqref{e22} implies that
\[
|x_k(t_1)-x_k^*(t_1)|<CpM
e_{\ominus\lambda}(t_1,t_0)\|\psi\|_{\infty},\,\,\,\forall
k\in\{1,2,\ldots,n\},
\]
 which contradicts  \eqref{e20}, and so \eqref{e19} holds. Letting $p\rightarrow 1$, then \eqref{e18} holds. Hence,
the weighted pseudo-almost periodic solution of system \eqref{e1} is globally
exponentially stable. The global  exponential stability implies that
the weighted pseudo-almost periodic solution is unique.
\end{proof}

\begin{corollary}
If conditions $(H_1)$-$(H_3)$ and $(H_5)$ hold. Furthermore, suppose that $I_i(i=1,2,\ldots,n)$
are almost periodic functions, then system \eqref{e1} has a  unique almost periodic solution which is globally exponential stable.
\end{corollary}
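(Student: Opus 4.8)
The plan is to combine the existence-and-uniqueness statement of Corollary \ref{cl51} with the stability mechanism already developed in the proof of Theorem \ref{thm61}. First I would observe that every almost periodic function is weighted pseudo-almost periodic for any $u\in\mathbb{U}_{\infty}^{Inv}$, since it admits the trivial decomposition $f=f+0$ with $0\in PAP_0(\mathbb{T},\mathbb{R}^n,u)$; hence the hypothesis that $I_i\,(i=1,2,\ldots,n)$ are almost periodic guarantees that $(H_4)$ holds, so that all of $(H_1)$--$(H_5)$ are in force. Rather than merely quoting Theorem \ref{thm61}, which only delivers a weighted pseudo-almost periodic solution, I would invoke Corollary \ref{cl51} directly: under $(H_1)$--$(H_3)$ and $(H_5)$ with almost periodic inputs, the nonlinear operator $\Phi$ constructed in the proof of Theorem \ref{thm51} maps the closed ball $E=\{\varphi\in AP(\mathbb{T},\mathbb{R}^n):\|\varphi\|_\infty\leq r_0\}$ into itself and is a contraction there, so Banach's fixed point theorem supplies a \emph{unique} almost periodic solution $x^*$ of \eqref{e1} lying in $E$.

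Next I would establish global exponential stability of $x^*$ by reproducing, essentially verbatim, the comparison argument of Theorem \ref{thm61}. The crucial point is that the stability estimate there is completely insensitive to whether the reference solution $x^*$ is merely weighted pseudo-almost periodic or genuinely almost periodic: the auxiliary functions $H_i(\epsilon)$, the constants $\varepsilon$, $\lambda$ and $M$, and the inequalities \eqref{e14} and \eqref{e16}--\eqref{e22} depend only on the structural data $\underline{c_i}$, $\overline{a_{ij}}$, $\overline{b_{ij}}$, $\alpha_j$, $\gamma$ together with $(H_1)$--$(H_3)$ and $(H_5)$. Writing $u_i=x_i-x_i^*$ for an arbitrary solution $x$ of \eqref{e1} with initial difference $\psi$, and repeating the first-exit-time contradiction---bounding $|u_i(t_1)|$ through the variation-of-constants representation \eqref{e16} and the key inequality \eqref{e14}---I would obtain $\|x(t)-x^*(t)\|\leq M\,e_{\ominus\lambda}(t,t_0)\|\psi\|_\infty$ for all $t\in(0,+\infty)_{\mathbb{T}}$, which is exactly \eqref{e18}.

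Finally, global exponential stability automatically forces uniqueness among almost periodic solutions. Indeed, if $\tilde{x}$ were a second almost periodic solution, then applying the stability estimate with $x^*$ as the reference solution gives $\|\tilde{x}(t)-x^*(t)\|\leq M\,e_{\ominus\lambda}(t,t_0)\|\psi\|_\infty\to 0$ as $t\to+\infty$; since $\tilde{x}-x^*$ is itself almost periodic and an almost periodic function vanishing at infinity must be identically zero, we conclude $\tilde{x}\equiv x^*$. I do not expect a genuine obstacle here; the only point deserving care is to verify that none of the constants or inequalities in the stability proof of Theorem \ref{thm61} secretly invoked the $PAP_0$-decomposition of the reference solution---they do not, relying only on its being a bounded solution of \eqref{e1}---so the transfer of that argument to the purely almost periodic setting is legitimate.
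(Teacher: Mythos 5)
Your proposal is correct and follows essentially the route the paper intends: the paper states this corollary without proof precisely because it is the combination of Corollary \ref{cl51} (existence and uniqueness of an almost periodic solution in $E$, via the same contraction $\Phi$ restricted to the ball in $AP(\mathbb{T},\mathbb{R}^n)$) with the stability argument of Theorem \ref{thm61}, whose estimates \eqref{e14}--\eqref{e22} indeed depend only on the structural constants and not on the decomposition of the reference solution. Your added observations --- that almost periodicity of the $I_i$ yields $(H_4)$ via the trivial decomposition $f=f+0$, and that uniqueness among all almost periodic solutions follows because an almost periodic function tending to zero must vanish identically --- are correct refinements of details the paper leaves implicit.
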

\begin{corollary}
If conditions $(H_1)$-$(H_3)$ and $(H_5)$ hold. Furthermore, suppose that $I_i(i=1,2,\ldots,n)$
are pseudo-almost periodic functions, then system \eqref{e1} has a  unique pseudo-almost periodic solution which is globally exponential stable.
\end{corollary}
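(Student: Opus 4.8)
The plan is to obtain this corollary as the constant-weight instance $u\equiv 1$ of Theorem \ref{thm61}, rather than redoing the fixed-point and stability analysis from scratch. First I would verify that $u\equiv 1$ is an admissible translation-invariant weight. Indeed $\inf_{t\in\mathbb{T}}u(t)=1>0$ and, as already noted right after the definition of $\mathbb{U}_{\infty}$, $u(Q_r)=\int_{Q_r}\Delta t=\mu_{\Delta}(Q_r)\to\infty$ as $r\to\infty$, so $u\in\mathbb{U}_{\infty}$; moreover for every $s\in\Pi$ one has $u(t+s)/u(t)\equiv 1$, whence $\overline{\lim\limits_{|t|\to\infty}}\,u(t+s)/u(t)=1<\infty$ and therefore $u\in\mathbb{U}_{\infty}^{Inv}$.

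Next I would identify the relevant function spaces. With $u\equiv 1$ the weighted ergodic condition of Definition \ref{def31},
\[
\lim_{r\to\infty}\frac{1}{u(Q_r)}\int_{Q_r}\|\varphi(t)\|u(t)\,\Delta t=0,
\]
becomes the unweighted $\Delta$-mean condition, so that $PAP_0(\mathbb{T},\mathbb{R}^n,u)$ collapses to the pseudo-almost periodic ergodic space of Definition \ref{def25} and hence $PAP(\mathbb{T},\mathbb{R}^n,u)$ is exactly the space of pseudo-almost periodic functions. In particular, condition $(H_4)$ specialized to $u\equiv 1$ asserts precisely that each $I_i$ is pseudo-almost periodic, which is the standing hypothesis of the corollary. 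Thus, under $(H_1)$--$(H_3)$ and $(H_5)$ together with the pseudo-almost periodicity of the $I_i$, all of the hypotheses $(H_1)$--$(H_5)$ of Theorem \ref{thm61} hold for this choice of weight.

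Applying Theorem \ref{thm61} with $u\equiv 1$ then yields at once that system \eqref{e1} possesses a unique pseudo-almost periodic solution which is globally exponentially stable. Equivalently, one may invoke Corollary \ref{cl52} to produce the unique pseudo-almost periodic solution $x^*$, and then run the error-variable computation \eqref{e12}--\eqref{e22} of Theorem \ref{thm61} verbatim: in the difference equation \eqref{e12} the inputs $I_i$ cancel, so that stability estimate uses only $(H_1)$--$(H_3)$ and $(H_5)$ together with the existence of $x^*$, and the weightedness of the inputs plays no further role. There is essentially no obstacle here; the one point deserving a line of verification is the identification of the $u\equiv 1$ weighted ergodic space with the pseudo-almost periodic space of Definition \ref{def25}, after which the conclusion is immediate from the already-proven Theorem \ref{thm61}.
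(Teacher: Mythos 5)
Your proposal is correct and follows exactly the route the paper intends: the corollary is the specialization of Theorem \ref{thm61} to the constant weight $u\equiv 1$, for which membership in $\mathbb{U}_{\infty}^{Inv}$ is immediate, $PAP(\mathbb{T},\mathbb{R}^n,u)$ reduces to the pseudo-almost periodic space of Definition \ref{def25}, and $(H_4)$ becomes precisely the hypothesis that the $I_i$ are pseudo-almost periodic. The paper states the corollary without proof because this specialization is immediate; your verification of the admissibility of the weight and of the identification of the two ergodic spaces (using that $u(Q_r)$ is comparable to $2r$ on an almost periodic time scale, whose graininess is bounded) supplies the only details that genuinely need checking.
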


\section{Numerical examples}\setcounter{equation}{0}
\vspace{1ex} \indent

Consider the following neural network:
\begin{equation} \label{e24}
x_i^{\Delta}(t)=-c_i(t)x_i(t)+\sum^2_{j=1}a_{ij}(t)f_j(x_j(t))+\sum^2_{j=1}b_{ij}(t)f_j(x_j(t-\gamma_{ij}))+I_i(t),\,i=1,2,
\end{equation}
where $
f_1(x)=\frac{\cos^{3}x+5}{18},
f_2(x)=\frac{\cos^{3}x+3}{12}$
and the weight $u=\frac{1}{2}+e^{-|t|}$.\\

\begin{example} Take $\mathbb{T}=\mathbb{R}$ and
\[
c_1(t)=11+|\cos(\sqrt{2}t)|,\quad c_2(t)=12-|\sin
t|,
\]
\[
I_1(t)=\frac{2}{16}(\sin t+\sin(t+\frac{\pi}{6})),\quad I_2(t)=\frac{\sqrt{2}}{8}(\sin t+\sin(\frac{\pi}{4}+\sqrt{2}t)),
\]
\[
\begin{split}
 (a_{ij}(t))_{2\times 2}=
 \left(\begin{array}{ccc}
  \frac{15}{7}|\cos t| &&  \frac{10}{7}|\sin t| \\
\frac{18}{7}|\cos t| &&  \frac{13}{14}|\sin t|
\end{array}\right)
\end{split},\,\,
\begin{split}
 (b_{ij}(t))_{2\times 2}=
 \left(\begin{array}{ccc}
 2|\sin t| &&  \frac{5}{3}|\cos t| \\
\frac{10}{3}|\sin t| &&  \frac{1}{24}|\sin t|
\end{array}\right)
\end{split}.
\]
Let $\gamma_{ji}(i,j=1,2)$ be real numbers, then $(H_2)-(H_4)$ hold. Let
$\alpha_1=\alpha_2=\frac{1}{4}$, then $(H_1)$ holds.
Next, let us check $(H_5)$, if we take $r_0=1$, then
\[
\max\big\{\frac{\eta_1}{\underline{c_1}},
\frac{\eta_2}{\underline{c_2}}\big\}+L=
\frac{1064}{2772}+\frac{\sqrt{2}}{44}\approx0.566<1=r_0
\]
and
\[
\max\{\Pi_1,\Pi_2\}=\max\big\{\frac{152}{84},\frac{165}{96}\big\}
=\frac{152}{84}<11=\min\{\underline{c_1},\underline{c_2}\}.
\]
Thus, $(H_5)$ holds for $r_0=1$. Now, by Theorem \ref{thm51} and Theorem \ref{thm61}, system \eqref{e24} has a unique weighted pseudo-almost periodic solution in the region
$
E=\{\varphi\in PAP(\mathbb{T},\mathbb{R}^2,u):\|\varphi\|_{\infty}\leq 1\},
$
which is globally exponential stable.
\end{example}

\begin{example} Take $\mathbb{T}=\mathbb{Z}$ and
\[
c_1(t)=0.9-0.1|\sin(\sqrt{3}t)|,\,\,
c_2(t)=0.8+0.1\cos^2t,
\]
\[
I_1(t)=\frac{1}{32}(3\sin t+\sqrt{3}\cos t),\quad
I_2(t)=\frac{1}{64}(\sqrt{2}\sin t+\sqrt{2}\cos t+2\sin t),
\]
\[
\begin{split}
 (a_{ij}(t))_{2\times 2}=
 \left(\begin{array}{ccc}
  \frac{1}{7}|\sin t| &&  \frac{1}{7}\sin^{2}t \\
\frac{3}{14}|\cos t| &&  \frac{1}{14}|\sin(\sqrt{2}t)|
\end{array}\right)
\end{split},\,\,
\begin{split}
 (b_{ij}(t))_{2\times 2}=
 \left(\begin{array}{ccc}
  \frac{1}{8}|\sin t| &&  \frac{1}{24}\cos^{2}t\\
\frac{1}{48}|\sin t| &&  \frac{1}{16}|\cos t|
\end{array}\right)
\end{split}.
\]
Let $\gamma_{ji}(i,j=1,2)$ be arbitrary nature numbers, then $(H_2)$-$(H_4)$ hold. Let
$\alpha_1=\alpha_2=\frac{1}{4}$, then $(H_1)$ holds.
Next, let us check $(H_4)$, if we take $r_0=1$, then
\[
\max\big\{\frac{\eta_1}{\underline{c_1}},
\frac{\eta_2}{\underline{c_2}}\big\}+L=
\frac{665}{2016}+\frac{5}{16}\approx0.642<1=r_0
\]
and
\[
\max\{\Pi_1,\Pi_2\}=\max\big\{\frac{19}{168},\frac{31}{336}\big\}
=\frac{19}{168}<0.8=\min\{\underline{c_1},\underline{c_2}\}.
\]
Thus, $(H_5)$ holds for $r_0=1$. Now, by Theorem \ref{thm51} and Theorem \ref{thm61}, system \eqref{e24} has a unique weighted pseudo-almost periodic solution in the region
$
E=\{\varphi\in PAP(\mathbb{T},\mathbb{R}^2,u):\|\varphi\|_{\infty}\leq 1\},
$
which is globally exponential stable.
\end{example}

\noindent\textbf{Conflict of Interests}\\

The authors declare that there is no conflict of interests
regarding the publication of this paper.

\end{document}